\numberwithin{equation}{section}
\theoremstyle{definition}
\newtheorem{example}{Example}[section]
\newtheorem{definition}[example]{Definition}
\theoremstyle{plain}
\newtheorem{lemma}[example]{Lemma}
\newtheorem{theorem}[example]{Theorem}
\newtheorem{proposition}[example]{Proposition}
\newtheorem{corollary}[example]{Corollary}
\newlength\squareheight
\DeclareMathOperator{\bA}{\mathbb{A}}
\DeclareMathOperator{\bQ}{\mathbb{Q}}
\DeclareMathOperator{\bR}{\mathbb{R}}
\DeclareMathOperator{\bT}{\mathbb{T}}
\DeclareMathOperator{\bZ}{\mathbb{Z}}
\DeclareMathOperator{\cA}{\mathcal{A}}
\DeclareMathOperator{\cF}{\mathcal{F}}
\DeclareMathOperator{\cO}{\mathcal{O}}
\DeclareMathOperator{\be}{\mathbf{e}}
\DeclareMathOperator{\bx}{\mathbf{x}}
\DeclareMathOperator{\se}{\mathsf{e}}
\DeclareMathOperator{\sT}{\mathsf{T}}
\DeclareMathOperator{\clv}{\operatorname{Cl-var}}
\DeclareMathOperator{\cluster}{\operatorname{Cluster}}
\title{Denseness of $g$-vector cones from weighted orbifolds}
\author{Toshiya Yurikusa}
\address{Laboratoire de Math\'{e}matiques de Versailles, UVSQ, CNRS, Universit\'{e} Paris-Saclay, 78035 Versailles, France and Mathematical Institute, Tohoku University, Sendai 980-8578, Japan}
\email{toshiya.yurikusa.d8@tohoku.ac.jp}
\subjclass[2010]{Primary 13F60, Secondary 05E45}
\keywords{Cluster algebra, orbifold, lamination, shear coordinate, $g$-vector}
\begin{document}

\begin{abstract}
We study $g$-vector cones in a cluster algebra defined from a weighted orbifold of rank $n$ introduced by Felikson, Shapiro and Tumarkin. We determine the closure of the union of the $g$-vector cones. It is equal to $\mathbb{R}^n$ except for a weighted orbifold with empty boundary and exactly one puncture, in which case it is equal to the half space of a certain explicit hyperplane in $\mathbb{R}^n$.
\end{abstract}
\maketitle
\section{Introduction}

Cluster algebras \cite{FZ02} are commutative algebras with generators, called cluster variables, which are grouped into sets of fixed cardinality, called clusters. Their original motivation was to study total positivity of semisimple Lie groups and canonical bases of quantum groups.  In recent years, cluster algebras have interacted with various subjects in mathematics, for example, representation theory of quivers, Poisson geometry, integrable systems, and so on.

Let $B$ be a skew-symmetrizable matrix and $\cA(B)$ the associated cluster algebra with principal coefficients (see Section \ref{sec:cluster algebra}). We denote by $\cluster B$ the set of clusters of $\cA(B)$. Each cluster variable $x$ of $\cA(B)$ has a numerical invariant $g(x)$, called the $g$-vector of $x$ \cite{FZ07}. For each $\bx\in\cluster B$, one can define a cone
\[
C(\bx) := \left\{\sum_{x \in \bx} a_x g(x) \middle| a_x \in \bR_{\ge0}\right\}
\]
in $\bR^n$, called the $g$-vector cone of $\bx$. All $g$-vector cones in $\cA(B)$ and their faces form a fan \cite[Theorem 8.7]{Re14}, called the $g$-vector fan of $\cA(B)$. It appears in various subjects: For example, it is a subfan of the underlying fan of the cluster scattering diagram \cite{GHKK18} and of the stability scattering diagram \cite{B17}; For cluster algebras of finite type, it is a complete fan \cite{FZ03b} and the normal fan of the generalized associahedron \cite{BCDMTY18,CFZ02,HPS18,PPPP19}. Moreover, the notion of $g$-vector fans is also defined in representation theory of quivers via $\tau$-tilting or silting theory, and studied in many papers (e.g. \cite{AHIKM22,AHIKM23,A21,BST19,DIJ19,M22}).

We study the denseness of $g$-vector fans, that is, of the union of $g$-vector cones. It is known for the following classes (see also \cite{Y23}):
\begin{itemize}
\item Cluster algebras of finite type \cite{FZ03b} or affine type \cite{RS18};
\item Cluster algebras and Jacobian algebras defined from marked surfaces \cite{Y20};
\item $\tau$-tilting finite algebras \cite{DIJ19};
\item Finite dimensional tame algebras \cite{PY23};
\item Complete special biserial algebras \cite{AY23};
\item Complete preprojective algebras of extended Dynkin quivers \cite{KM22}.
\end{itemize}
Note that the denseness can be used to study the connectedness of exchange graphs in representation theory \cite[Corollary 1.4]{Y20} and the full Fock-Goncharov conjecture \cite[Proposition 5.22 and Theorem 5.24]{MQ23} and so on. In this paper, we add cluster algebras defined from weighted orbifolds to these classes.

Let $\cO^w$ be a weighted orbifold and $T$ a tagged triangulation of $\cO^w$ (see Section \ref{sec:wo}). Felikson, Shapiro and Tumarkin \cite{FeST12a,FeST12b} constructed the skew-symmetrizable matrix $B_T$ associated with $T$ and studied the corresponding cluster algebra $\cA(B_T)$. Cluster variables and clusters of $\cA(B_T)$ correspond to tagged arcs and tagged triangulations of $\cO^w$, respectively (Theorem \ref{thm:bijection}). Note that a weighted orbifold without orbifold points is just a marked surface that was developed in \cite{FG06,FG09,FoST08,FT18,GSV05}. Our main aim is to give the following result proven in \cite[Theorem 1.2]{Y20} for marked surfaces.

\begin{theorem}\label{thm:dense wo}
If $\cO^w$ is not a weighted orbifold with empty boundary and exactly one puncture, then we have
\[
\overline{\bigcup_{\bx\in\cluster B_T}C(\bx)}=\bR^{|T|}.
\]
If $\cO^w$ is a weighted orbifold with empty boundary and exactly one puncture, then we have
\[
\overline{\bigcup_{\bx\in\cluster B_T}C(\bx)}=
\left\{(a_{\delta})_{\delta\in T}\in\bR^{|T|}\middle|\sum_{\delta\in T\setminus T_p}a_{\delta}+\frac{1}{2}\sum_{\delta\in T_p}a_{\delta}\ge 0\right\},
\]
where $T_p$ is the set of pending arcs of $T$ with weight $\frac{1}{2}$.
\end{theorem}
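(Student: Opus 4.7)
\medskip

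\noindent\textbf{Strategy.} My plan is to adapt the lamination-theoretic argument of \cite{Y20}, which establishes the analogous statement for ordinary marked surfaces, to the weighted orbifold setting. The strategy proceeds in three steps: (i) express each $g$-vector as the shear coordinate of an elementary lamination associated with a tagged arc; (ii) identify the union of $g$-vector cones with the set of shear coordinates of suitable integral laminations on $\cO^w$; (iii) approximate an arbitrary vector in $\bR^{|T|}$ (or in the prescribed half-space in the exceptional case) by a sequence of rescaled shear coordinates of explicit winding curves.

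\medskip

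\noindent\textbf{Step 1 ($g$-vectors as shear coordinates).} The first step is to set up a theory of shear coordinates $b_T(L)$ for integral unbounded measured laminations $L$ on $\cO^w$ with respect to the tagged triangulation $T$, extending Fomin--Thurston's construction and tracking the weight $\tfrac{1}{2}$ at orbifold points, so that a lamination intersecting a pending arc in $T_p$ contributes with a factor of $\tfrac{1}{2}$. Each tagged arc $\delta\in T$ carries an elementary lamination $L_\delta$ obtained by a slight displacement of $\delta$. The heart of Step 1 is the identity $g(x_\delta)=\pm b_T(L_\delta)$, where $x_\delta$ is the cluster variable corresponding to $\delta$ under the bijection of Theorem \ref{thm:bijection}. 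One natural route is an unfolding argument: realize $\cO^w$ as the quotient of a marked surface $S$ by a finite group action, lift $T$ to a triangulation $\widetilde{T}$ of $S$, and descend the $g$-vector/shear-coordinate identity from \cite{Y20} through the folding, checking that the weight factors are produced correctly by the symmetrizer.

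\medskip

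\noindent\textbf{Steps 2 and 3 (density and the exceptional case).} Once Step 1 is in place, $\bigcup_{\bx\in\cluster B_T}C(\bx)$ becomes, up to sign, the cone spanned by shear coordinates of multi-laminations arising from tagged triangulations, so the problem reduces to the density of such shear coordinates. For the generic case, I would exhibit, for any target $v\in\bR^{|T|}$, a family of simple closed curves that wind increasingly many times around a chosen loop; after rescaling, their shear coordinates converge to $v$, exploiting the fact that a weighted orbifold with non-empty boundary or at least two punctures is ``large enough'' to support arbitrary shear data. In the exceptional case of empty boundary and a single puncture, every essential lamination is forced (by the topology of the punctured orbifold) to be a disjoint union of loops surrounding the puncture, and this constraint translates into precisely the stated inequality: the weight-$\tfrac{1}{2}$ pending arcs in $T_p$ appear with coefficient $\tfrac{1}{2}$ because an orbifold point of order $2$ absorbs only half the crossing number that an ordinary puncture would. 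Density within this half-space then follows from a restricted version of the winding-curve construction.

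\medskip

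\noindent\textbf{Main obstacle.} I expect the principal difficulty to lie in Step 1: the correct bookkeeping of the $\tfrac{1}{2}$-weights at orbifold points in the shear-coordinate/$g$-vector identity, and the verification that the unfolding (or a direct mutation-invariance argument) is compatible with the skew-symmetrizable mutation rule for $B_T$ rather than only the skew-symmetric case treated in \cite{Y20}. A secondary difficulty is to ensure, in Step 3, that the topological constraint derived for once-punctured empty-boundary orbifolds matches the explicit linear functional in the statement with the correct coefficient $\tfrac{1}{2}$ on $T_p$, and that the density construction truly fills the entire half-space rather than a proper subcone.
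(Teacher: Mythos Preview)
Your three-step strategy is broadly aligned with the paper's approach, but there are two genuine gaps worth naming.

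\textbf{The duality in Step 1.} You write the key identity as $g(x_\delta)=\pm b_T(L_\delta)$ with shear coordinates taken on $\cO^w$ itself. In fact the correct statement (Proposition~\ref{prop:g=-b}, which the paper imports from \cite{FeT17}) is
\[
g(x_T(\delta))=-b_{T^{\ast}}(\se(\delta^{\ast})),
\]
where $T^{\ast},\delta^{\ast}$ live on the \emph{dual} weighted orbifold $\cO^{1/w}$ obtained by inverting all the orbifold weights. This duality is not a cosmetic matter: when you later track which pending arcs pick up the coefficient $\tfrac12$ in the linear functional, you must use the weight-$\tfrac12$ arcs of $\cO^{1/w}$, i.e.\ the weight-$2$ arcs of $\cO^w$, and then negate. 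Your unfolding plan could in principle rediscover this, but as stated your formula is on the wrong orbifold and your ``main obstacle'' paragraph shows no awareness of the swap. The paper sidesteps this entirely by citing \cite{FeT17}; it does not reprove the identity.

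\textbf{The once-punctured case in Step 3.} Your claim that ``every essential lamination is forced to be a disjoint union of loops surrounding the puncture'' is false: a closed orbifold with one puncture supports a rich supply of non-peripheral laminates. The paper's actual argument is a direct computation of the linear functional $b_T^{\sum}(\ell):=\sum_{\gamma\notin T_{\rm ps}}b_{\gamma,T}(\ell)+\tfrac12\sum_{\gamma\in T_{\rm ps}}b_{\gamma,T}(\ell)$ on every elementary laminate $\se(\delta)$, showing it is strictly negative or positive according to whether the tags of $\delta$ at the puncture are plain or notched (Proposition~\ref{prop:Y20 sum} and the discussion following it). This, combined with the first assertion of Theorem~\ref{thm:dense lam} and the flip-connectedness dichotomy (Theorem~\ref{thm:transitivity}), yields the half-space description. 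Your topological constraint as written does not give this.

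For Step 3 in the generic case, your ``winding curves'' are essentially the Dehn twists the paper uses (Theorem~\ref{thm:Dehn}, Proposition~\ref{prop:inclusion}), but the paper also needs an intermediate reduction: arbitrary laminations are first decomposed into elementary, exceptional, semi-closed and closed laminates, and the exceptional and semi-closed ones are replaced by elementary and closed ones respectively (Lemma~\ref{lem:=p+q}, Corollary~\ref{cor:double}) before the Dehn-twist limit is taken. Without this reduction, the density argument does not go through for arbitrary target vectors.
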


We prove Theorem \ref{thm:dense wo} by using the method in \cite{Y20}. In fact, we consider a certain class of curves in $\cO^w$, called laminates (see Subsection \ref{subsec:lam}). To a laminate $\ell$ of $\cO^w$, we associate an integer vector $b_T(\ell) \in \bZ^{|T|}$ whose entries are shear coordinates of $\ell$.  In the same way as $g$-vector cones, we can define a cone $C(L)$ in $\bR^{|T|}$ for a set $L$ of laminates, that is, $C(L):=\{\sum_{\ell\in L}a_{\ell}b_T(\ell)\mid a_{\ell}\in\bR_{\ge0}\}$. One can construct an injective map $\se$ from the set of tagged arcs to the set of laminates (see Subsection \ref{subsec:kinds of lam}). The following result plays an important role to prove Theorem \ref{thm:dense wo}.

\begin{theorem}\label{thm:dense lam}
For any weight orbifold $\cO^w$, we have
\[
\overline{\bigcup_{T'}C(\se(T'))}=\bR^{|T|},
\]
where $T'$ runs over all tagged triangulations of $\cO^w$. In addition, if $\cO^w$ is a weighted orbifold with empty boundary and exactly one puncture, then we have
\[
\overline{\bigcup_{T'}C(\se(T'))} = \overline{\bigcup_{T''}(-C(\se(T'')))}=
\left\{(a_{\delta})_{\delta\in T}\in\bR^{|T|}\middle|\sum_{\delta\in T\setminus T_p}a_{\delta}+\frac{1}{2}\sum_{\delta\in T_p}a_{\delta}\le 0\right\},
\]
where $T'$ (resp., $T''$) runs over all tagged triangulations of $\cO^w$ tagged at the puncture in the same (resp., different) way as $T$.
\end{theorem}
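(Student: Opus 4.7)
The plan is to adapt the approach of \cite{Y20} from marked surfaces to weighted orbifolds, using shear coordinates as the principal tool. The first step is to verify that the shear coordinate map $b_T$, extended $\bR$-linearly to real-weighted laminations on $\cO^w$, has image either all of $\bR^{|T|}$ in the general case, or exactly the half-space appearing in the theorem statement in the exceptional case of empty boundary and one puncture. The inequality in the exceptional case should reflect a topological obstruction at the unique puncture, and the coefficients $1$ and $\tfrac12$ should come directly from the weights of the orbifold points adjacent to the pending arcs in $T_p$.

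\textbf{Main argument.} The central step is to show that for every lamination $L$ on $\cO^w$ there exists a tagged triangulation $T'$ of $\cO^w$ such that
\[
b_T(L)=\sum_{\delta'\in T'}a_{\delta'}\,b_T(\se(\delta')),\qquad a_{\delta'}\ge 0,
\]
which is precisely the condition $b_T(L)\in C(\se(T'))$. I would construct such a $T'$ by an iterative flip procedure: starting from an arbitrary tagged triangulation, repeatedly flip an arc along which the shear coordinate of $L$ is negative, monitoring a combinatorial complexity (such as a weighted intersection number of $L$ with the current triangulation) that strictly decreases under each such flip. Once the procedure terminates, all shear coordinates of $L$ with respect to the resulting $T'$ are non-negative. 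Combining this with Step~1 and passing to closures yields the claimed description of $\overline{\bigcup_{T'}C(\se(T'))}$ for general $\cO^w$.

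\textbf{Exceptional case and main obstacle.} In the one-puncture, empty-boundary setting, tagged triangulations split into two classes according to the tagging at the unique puncture. The flip procedure of Step~2 preserves this tagging, so running it on an $L$ with $b_T(L)$ in the stated half-space produces cones $C(\se(T'))$ whose union is dense in that half-space for $T'$ tagged as $T$. Globally swapping the tagging at the puncture negates all shear coordinates, and therefore the cones $-C(\se(T''))$ with $T''$ oppositely tagged exhaust the very same half-space, giving the second equality. The main obstacle I anticipate is the flip step itself: the presence of orbifold points forces new local configurations (pending arcs, laminates terminating at orbifold points, and the weight-$\tfrac12$ contributions) that do not appear in \cite{Y20}, and a careful case analysis will be needed to verify that the chosen complexity strictly decreases in each such local configuration and that the tagging at the puncture is preserved throughout in the exceptional case.
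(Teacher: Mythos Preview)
Your central claim---that for every lamination $L$ there is a tagged triangulation $T'$ with $b_T(L)\in C(\se(T'))$---is false, and this is where the flip procedure breaks down. If $L$ contains a closed laminate $c$, the vector $b_T(L)$ generically lies only in the \emph{closure} of the union of cones, not in any individual cone. The paper's own Example~\ref{ex:monogon}/\ref{ex:dense lam} already exhibits this: $b_T(c)=(-1,2)$ sits on an accumulation ray of the fan and is not contained in any $C(\se(T'))$. No finite sequence of flips can reach it, and no monotone complexity argument can terminate there, because a tagged triangulation consists only of (tagged) arcs, never closed curves. Your procedure simply has no mechanism for producing the closed-curve directions.

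The paper handles precisely this difficulty by a different mechanism: it decomposes $L$ into elementary/exceptional, semi-closed, and closed parts, eliminates the semi-closed part by the doubling trick (Corollary~\ref{cor:double}), converts exceptional laminates to elementary ones via $(-)_{\sf pq}$, and then absorbs the closed part using \emph{Dehn twists}. The key limiting step is Proposition~\ref{prop:inclusion}: iterating a product of Dehn twists $\sT$ along the closed laminates of $L$ on a suitable triangulation $T_L$ gives $\displaystyle\lim_{m\to\infty}\tfrac{1}{m}\,b_T(\se\sT^m(U))\in\bR_{>0}\,b_T(L_{\rm cl})$, so $b_T(L)$ lies in the closure of $\bigcup_m C(\se\sT^m(T_L))$. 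This asymptotic argument is essential and is exactly what your flip approach lacks. Your sketch of the second assertion (sign of a weighted sum of shear coordinates governed by the tagging at the unique puncture) is on the right track and matches the paper's computation of $b_T^{\sum}(\ell)$, but it only becomes useful once the first assertion is established by the Dehn-twist argument.
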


This paper is organized as follows. In Section \ref{sec:wo}, we recall the notions of weighted orbifolds, laminates, and their shear coordinates.  We classify laminates into four kinds: elementary, exceptional, semi-closed, and closed laminates. By replacing exceptional and semi-closed laminates with appropriate elementary and closed laminates, respectively, we can reduce to two kinds of laminates to prove Theorem \ref{thm:dense lam}. We apply the Dehn twists along closed laminates to elementary laminates. Seeing the asymptotic behavior of their shear coordinates, we prove Theorem \ref{thm:dense lam}. In Section \ref{sec:cluster algebra}, we recall cluster algebras defined from weighted orbifolds and explain that Theorem \ref{thm:dense wo} follows from Theorem \ref{thm:dense lam}.

This paper is a natural extension of \cite{Y20}, which is the author's previous paper for marked surfaces. For that reason, we need almost similar claims and proofs obtained by just replacing marked surfaces with weighted orbifolds (see e.g. Propositions \ref{prop:properties of elementary}, \ref{prop:properties of pq}, and \ref{prop:inclusion}). We refer to the corresponding results in \cite{Y20} before the claims and state the proofs for the convenience of the reader.

\section{Denseness of cones from tagged triangulations on a weighted orbifold}\label{sec:wo}

\subsection{Orbifolds}

We start with recalling the notions of \cite{FeST12a,FoST08}. Let $S$ be a connected compact oriented Riemann surface with (possibly empty) boundary $\partial S$. Let $M \sqcup Q$ be a finite set of marked points on $S$ such that $M\neq\emptyset$, $Q\cap\partial S=\emptyset$, and there are at least one marked point on each connected component of $\partial S$. We call the triple $\cO=(S,M,Q)$ an \emph{orbifold}. Note that the orbifold $(S,M,\emptyset)$ without orbifold points is also called a \emph{marked surface}. A marked point in $M\setminus\partial S$ (resp., in $Q$) is called a \emph{puncture} (resp., an \emph{orbifold point}). We represent an orbifold point by $\times$ in the figures. For technical reasons, we assume that $\cO$ is none of the following (see \cite{FoST08} for the details):
\begin{itemize}
\item a monogon with at most one puncture and orbifold point in total;
\item a digon without punctures nor orbifold points;
\item a triangle without punctures nor orbifold points;
\item a sphere with at most three punctures and orbifold points in total.
\end{itemize}
 Throughout this paper, a curve in $S$ is considered up to isotopy relative to $M\sqcup Q$. When we consider intersections of curves, we assume that they intersect transversally in a minimum number of points.

An \emph{ideal arc} $\gamma$ of $\cO$ is a curve in $S$ with endpoints in $M \sqcup Q$ such that the following conditions are satisfied (see Figure \ref{fig:nonarcs}):
\begin{itemize}
\item $\gamma$ does not intersect itself except at its endpoints;
\item $\gamma$ is disjoint from $M\cup Q\cup\partial S$ except at its endpoints;
\item $\gamma$ does not connect orbifold points;
\item If $\gamma$ cut out a monogon, then its interior has either at least one puncture or at least two orbifold points;
\item $\gamma$ does not cut out a digon without punctures nor orbifold points.
\end{itemize}
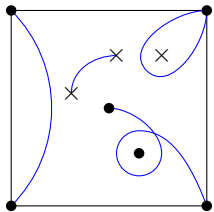
\begin{figure}[htp]
\begin{tikzpicture}[baseline=0mm]
\coordinate(lu)at(-1.3,1.3);\coordinate(ld)at(-1.3,-1.3);\coordinate(ru)at(1.3,1.3);\coordinate(rd)at(1.3,-1.3);
\draw(lu)--(ld)--(rd)--(ru)--(lu);
\coordinate(o1)at(0.1,0.7);\coordinate(o2)at(-0.5,0.2);\coordinate(o3)at(0.7,0.7);
\coordinate(p)at(0,0);\coordinate(q)at(0.4,-0.6);
\draw[blue](o1)to[out=180,in=90](o2);
\draw[blue](ru)to[out=45,in=90,relative](0.5,0.5);
\draw[blue](ru)to[out=-45,in=-90,relative](0.5,0.5); 
\draw[blue](lu)to[out=-45,in=45](ld);
\draw[blue](rd)..controls(1,-0.5)and(0.7,-0.3)..(0.4,-0.3); 
\draw[blue](0.4,-0.3)arc(90:360:0.3); 
\draw[blue](p)..controls(0.3,0)and(0.7,-0.3)..(0.7,-0.6);
\fill(lu)circle(0.07);\fill(ld)circle(0.07);\fill(ru)circle(0.07);\fill(rd)circle(0.07);
\fill(p)circle(0.07);\fill(q)circle(0.07);
\node at(o1){$\times$};\node at(o2){$\times$};\node at(o3){$\times$};
\end{tikzpicture}
\caption{Curves in an orbifold which are not ideal arcs}
\label{fig:nonarcs}
\end{figure}

We say that two ideal arcs are \emph{compatible} if they do not intersect in the interior of $S$ and they are not incident to a common orbifold point. An \emph{ideal triangulation} is a maximal set of distinct pairwise compatible ideal arcs.
A triangle with only two distinct sides is called \emph{self-folded} (see Figure \ref{fig:self-folded}).
\begin{figure}[htp]
\begin{minipage}{0.5\textwidth}
\centering
\begin{tikzpicture}
\coordinate(d)at(0,0);\coordinate(p)at(0,1);
\draw(d)node[below]{$o$}--node[fill=white,inner sep=1]{$\gamma'$}(p)node[above]{$p$};
\draw(d)to[out=30,in=0]node[right]{$\gamma$}(0,1.6);\draw(d)to[out=150,in=180](0,1.6);
\fill(d)circle(0.07);\fill(p)circle(0.07);
\end{tikzpicture}
\hspace{10mm}
\begin{tikzpicture}
\coordinate(d)at(0,0);\coordinate(p)at(0,1);
\draw(d)node[below]{$o$}--node[left]{$\iota(\gamma')$}(p)node[above]{$p$};
\draw(d)to[out=10,in=-10]node[right]{$\iota(\gamma)$}node[pos=0.8]{\rotatebox{40}{\footnotesize $\bowtie$}}(p);
\fill(d)circle(0.07);\fill(p)circle(0.07);
\end{tikzpicture}
\caption{A self-folded triangle and the corresponding tagged arcs}
\label{fig:self-folded}
\end{minipage}
\begin{minipage}{0.48\textwidth}\vspace{5mm}
\centering
\begin{tikzpicture}
\coordinate(0)at(0,0);\coordinate(1)at(0,-1.2);
\draw(0)--(1);
\draw(0)to[out=80,in=100,relative]node[pos=0.2]{\rotatebox{40}{\footnotesize $\bowtie$}}(1);
\fill(0)circle(0.07);\fill(1)circle(0.07);
\end{tikzpicture}
\hspace{10mm}
\begin{tikzpicture}
\coordinate(0)at(0,0);\coordinate(1)at(0,-1.2);
\draw(0)--node[pos=0.8]{\rotatebox{0}{\footnotesize $\bowtie$}}(1);
\draw(0)to[out=80,in=100,relative]node[pos=0.2]{\rotatebox{40}{\footnotesize $\bowtie$}}node[pos=0.75]{\rotatebox{-40}{\footnotesize $\bowtie$}}(1);
\fill(0)circle(0.07);\fill(1)circle(0.07);
\end{tikzpicture}
\caption{Pairs of conjugate arcs}
\label{fig:pair}
\end{minipage}
\end{figure}

For an ideal triangulation $T$, a \emph{flip} at an ideal arc $\gamma \in T$ replaces $\gamma$ with another ideal arc $\gamma' \notin T$ such that $(T\setminus\{\gamma\})\cup\{\gamma'\}$ is an ideal triangulation. Notice that an ideal arc inside a self-folded triangle can not be flipped. To make flip always possible, the notion of tagged arcs was introduced in \cite{FoST08}.

A \emph{tagged arc} $\delta$ of $\cO$ is an ideal arc with each end being tagged in one of two ways, \emph{plain} or \emph{notched}, such that the following conditions are satisfied (see Figure \ref{fig:nontaggedarcs}):
\begin{itemize}
 \item $\delta$ does not cut out a monogon with exactly one puncture and no orbifold points;
 \item Ends of $\delta$ incident to $Q\sqcup\partial S$ are tagged plain;
 \item Both ends of a loop are tagged in the same way,
\end{itemize}
where a \emph{loop} is an ideal/tagged arc with two identical endpoints. Note that the endpoints of a loop are not orbifold points. An ideal/tagged arc with endpoint being an orbifold point is called a \emph{pending} (\emph{ideal/tagged}) \emph{arc}. In the figures, we represent tags as follows:
\[
\begin{tikzpicture}
\coordinate(0)at(0,0) node[left]{plain};
\coordinate(1)at(1,0);  \fill(1)circle(0.07);
\draw(0) to (1);
\end{tikzpicture}
\hspace{7mm}
\begin{tikzpicture}
\coordinate(0)at(0,0) node[left]{notched};
\coordinate(1)at(1,0);  \fill(1)circle(0.07);
\draw(0) to node[pos=0.8]{\rotatebox{90}{\footnotesize $\bowtie$}} (1);
\end{tikzpicture}
\]
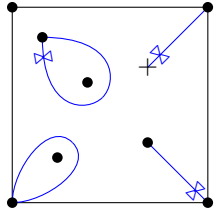
\begin{figure}[htp]
\begin{tikzpicture}[baseline=0mm]
\coordinate(lu)at(-1.3,1.3);\coordinate(ld)at(-1.3,-1.3);\coordinate(ru)at(1.3,1.3);\coordinate(rd)at(1.3,-1.3);
\draw(lu)--(ld)--(rd)--(ru)--(lu);
\coordinate(p1)at(-0.9,0.9);\coordinate(q1)at(-0.3,0.3);\coordinate(o)at(0.5,0.5);
\coordinate(p)at(-0.7,-0.7);\coordinate(q)at(0.5,-0.5);
\draw[blue](ru)--node[pos=0.8]{\rotatebox{-45}{\footnotesize $\bowtie$}}(o);
\draw[blue](rd)--node[pos=0.2]{\rotatebox{45}{\footnotesize $\bowtie$}}(q);
\draw[blue](p1)..controls(0.1,0.9)and(0.1,0.3)..(-0.1,0.1); 
\draw[blue](p1)..controls(-0.9,-0.1)and(-0.3,-0.1)..node[pos=0.1]{\rotatebox{10}{\footnotesize $\bowtie$}}(-0.1,0.1);
\draw[blue](ld)to[out=45,in=90,relative](-0.5,-0.5);
\draw[blue](ld)to[out=-45,in=-90,relative](-0.5,-0.5); 
\fill(lu)circle(0.07);\fill(ld)circle(0.07);\fill(ru)circle(0.07);\fill(rd)circle(0.07);
\fill(p)circle(0.07);\fill(q)circle(0.07);\fill(p1)circle(0.07);\fill(q1)circle(0.07);
\node at(o){\rotatebox{45}{$\times$}};
\end{tikzpicture}
\caption{Ideal arcs with each end being tagged, which are not tagged arcs}
\label{fig:nontaggedarcs}
\end{figure}

For an ideal arc $\gamma$ of $\cO$, we define a tagged arc $\iota(\gamma)$ as follows:
\begin{itemize}
\item If $\gamma$ does not cut out a monogon with exactly one puncture and no orbifold points, then $\iota(\gamma)$ is the tagged arc obtained from $\gamma$ by tagging both ends plain;
\item If $\gamma$ cuts out a monogon with exactly one puncture $p$ and no orbifold points, then there is a unique ideal arc $\gamma'$ such that $\{\gamma,\gamma'\}$ is a self-folded triangle. Then $\iota(\gamma)$ is the tagged arc obtained from $\iota(\gamma')$ by changing its tag at $p$ (see Figure \ref{fig:self-folded}).
\end{itemize}
A \emph{pair of conjugate arcs} is, for a self-folded triangle $\{\gamma,\gamma'\}$, either $\{\iota(\gamma),\iota(\gamma')\}$ or obtained from it by changing all tags at each endpoint (see Figure \ref{fig:pair}).

For a tagged arc $\delta$, we denote by $\delta^{\circ}$ the ideal arc obtained from $\delta$ by forgetting its tags. We say that two tagged arcs $\delta$ and $\varepsilon$ are \emph{compatible} if the following conditions are satisfied:
\begin{itemize}
\item The ideal arcs $\delta^{\circ}$ and $\varepsilon^{\circ}$ are compatible;
\item If $\delta^{\circ}=\varepsilon^{\circ}$, then $\delta=\varepsilon$ or $\{\delta,\varepsilon\}$ is a pair of conjugate arcs;
\item If $\delta^{\circ}\neq\varepsilon^{\circ}$ and they have a common endpoint $o$, then the tags of $\delta$ and $\varepsilon$ at $o$ are the same.
\end{itemize}
A \emph{partial tagged triangulation} is a set of distinct pairwise compatible tagged arcs. If a partial tagged triangulation is maximal, then it is called a \emph{tagged triangulation}. It is easy to check that any tagged triangulation of $\cO$ decomposes $\cO$ into \emph{triangles} as in Figures \ref{Fig:non-exceptional-triangles} and \ref{Fig:exceptional-triangles} (see also \cite{FeST12a,FeST12b}), which are called non-exceptional triangles and exceptional triangles, respectively.

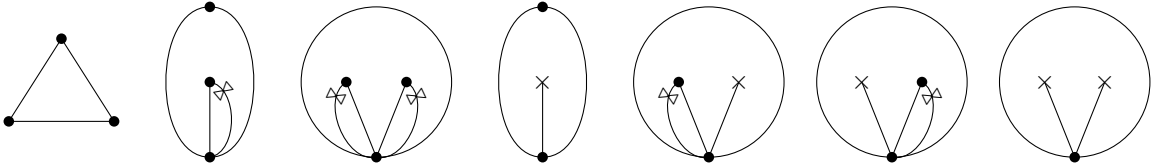
\begin{figure}[htp]
\centering
\begin{tikzpicture}[baseline=12]
\coordinate(u)at(0,1);\coordinate(l)at(-0.7,-0.1);\coordinate(r)at(0.7,-0.1);
\draw(u)--(l)--(r)--(u);
\fill(u)circle(0.07);\fill(l)circle(0.07);\fill(r)circle(0.07);
\end{tikzpicture}
 \hspace{3mm}%
\begin{tikzpicture}[baseline=0]
\coordinate(p)at(0,0);\coordinate(u)at(0,1);\coordinate(d)at(0,-1);
\draw(d)to[out=180,in=180](u);\draw(d)to[out=0,in=0](u);
\draw(d)--(p);
\draw(p)to[out=80,in=100,relative]node[pos=0.2]{\rotatebox{40}{\footnotesize $\bowtie$}}(d);
\fill(p)circle(0.07);\fill(u)circle(0.07);\fill(d)circle(0.07);
\end{tikzpicture}
 \hspace{3mm}%
\begin{tikzpicture}[baseline=0]
\coordinate(d)at(0,-1);\coordinate(l)at(-0.4,0);\coordinate(r)at(0.4,0);
\draw(0,0)circle(1);\draw(l)--(d)--(r);
\draw(l)to[out=-80,in=-100,relative]node[pos=0.2]{\rotatebox{-25}{\footnotesize $\bowtie$}}(d);
\draw(r)to[out=80,in=100,relative]node[pos=0.2]{\rotatebox{25}{\footnotesize $\bowtie$}}(d);
\fill(d)circle(0.07);\fill(l)circle(0.07);\fill(r)circle(0.07);
\end{tikzpicture}
 \hspace{3mm}%
\begin{tikzpicture}[baseline=0]
\coordinate(p)at(0,0);\coordinate(u)at(0,1);\coordinate(d)at(0,-1);
\draw(d)to[out=180,in=180](u);\draw(d)to[out=0,in=0](u);
\draw(d)--(p);
\node at(p){$\times$};\fill(u)circle(0.07);\fill(d)circle(0.07);
\end{tikzpicture}
 \hspace{3mm}%
\begin{tikzpicture}[baseline=0]
\coordinate(d)at(0,-1);\coordinate(l)at(-0.4,0);\coordinate(r)at(0.4,0);
\draw(0,0)circle(1);\draw(l)--(d)--(r);
\draw(l)to[out=-80,in=-100,relative]node[pos=0.2]{\rotatebox{-25}{\footnotesize $\bowtie$}}(d);
\fill(d)circle(0.07);\fill(l)circle(0.07);\node at(r){$\times$};
\end{tikzpicture}
 \hspace{3mm}%
\begin{tikzpicture}[baseline=0]
\coordinate(d)at(0,-1);\coordinate(l)at(-0.4,0);\coordinate(r)at(0.4,0);
\draw(0,0)circle(1);\draw(l)--(d)--(r);
\draw(r)to[out=80,in=100,relative]node[pos=0.2]{\rotatebox{25}{\footnotesize $\bowtie$}}(d);
\fill(d)circle(0.07);\node at(l){$\times$};\fill(r)circle(0.07);
\end{tikzpicture}
 \hspace{3mm}%
\begin{tikzpicture}[baseline=0]
\coordinate(d)at(0,-1);\coordinate(l)at(-0.4,0);\coordinate(r)at(0.4,0);
\draw(0,0)circle(1);\draw(l)--(d)--(r);
\fill(d)circle(0.07);\node at(l){$\times$};\node at(r){$\times$};
\end{tikzpicture}
\caption{A complete list of non-exceptional triangles of orbifolds except for changing all tags at each puncture}
\label{Fig:non-exceptional-triangles}
\end{figure}
\begin{figure}[htp]
\centering
\begin{tikzpicture}[baseline=0mm]
\coordinate(0)at(0,0);\coordinate(u)at(90:1);\coordinate(r)at(-30:1);\coordinate(l)at(210:1);
\draw(0)--(u);\draw(0)--(l);\draw(0)--(r);
\draw(0)to[out=-60,in=-120,relative]node[pos=0.8]{\rotatebox{20}{\footnotesize $\bowtie$}}(u);
\draw(0)to[out=-60,in=-120,relative]node[pos=0.8]{\rotatebox{-30}{\footnotesize $\bowtie$}}(l);
\draw(0)to[out=-60,in=-120,relative]node[pos=0.8]{\rotatebox{100}{\footnotesize $\bowtie$}}(r);
\fill(0)circle(0.07);\fill(u)circle(0.07);\fill(l)circle(0.07);\fill(r)circle(0.07);\node at(0.3,0){$o$};
\end{tikzpicture}
 \hspace{7mm}%
\begin{tikzpicture}[baseline=0mm]
\coordinate(0)at(0,0);\coordinate(u)at(90:1);\coordinate(r)at(-30:1);\coordinate(l)at(210:1);
\draw(0)--(u);\draw(0)--(l);\draw(0)--(r);
\draw(0)to[out=-60,in=-120,relative]node[pos=0.8]{\rotatebox{20}{\footnotesize $\bowtie$}}(u);
\draw(0)to[out=-60,in=-120,relative]node[pos=0.8]{\rotatebox{-30}{\footnotesize $\bowtie$}}(l);
\fill(0)circle(0.07);\fill(u)circle(0.07);\fill(l)circle(0.07);\node at(r){$\times$};\node at(0.3,0){$o$};
\end{tikzpicture}
 \hspace{7mm}%
\begin{tikzpicture}[baseline=0mm]
\coordinate(0)at(0,0);\coordinate(u)at(90:1);\coordinate(r)at(-30:1);\coordinate(l)at(210:1);
\draw(0)--(u);\draw(0)--(l);\draw(0)--(r);
\draw(0)to[out=-60,in=-120,relative]node[pos=0.8]{\rotatebox{20}{\footnotesize $\bowtie$}}(u);
\fill(0)circle(0.07);\fill(u)circle(0.07);\node at(l){$\times$};\node at(r){$\times$};\node at(0.3,0){$o$};
\end{tikzpicture}
 \hspace{7mm}%
\begin{tikzpicture}[baseline=0mm]
\coordinate(0)at(0,0);\coordinate(u)at(90:1);\coordinate(r)at(-30:1);\coordinate(l)at(210:1);
\draw(0)--(u);\draw(0)--(l);\draw(0)--(r);
\fill(0)circle(0.07);\node at(u){$\times$};\node at(l){$\times$};\node at(r){$\times$};\node at(0.3,0.1){$o$};
\end{tikzpicture}
\caption{A complete list of exceptional triangles of orbifolds except for changing all tags at $o$, where each exceptional triangle forms a tagged triangulation of a sphere}
\label{Fig:exceptional-triangles}
\end{figure}
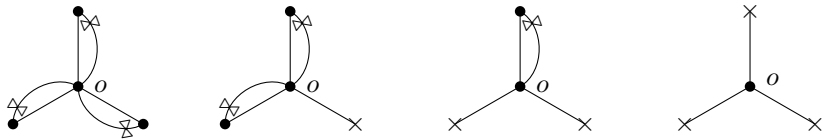

We can define \emph{flips} of tagged triangulations in the same way as ones of ideal triangulations. In particular, any tagged arc can be flipped.

\begin{theorem}[{\cite[Theorem 4.2]{FeST12a}\cite[Theorem 7.9 and Proposition 7.10]{FoST08}}]\label{thm:transitivity}
If $\cO$ is not an orbifold with empty boundary and exactly one puncture, then any two tagged triangulations of $\cO$ are connected by a sequence of flips. If $\cO$ is an orbifold with empty boundary and exactly one puncture, then two tagged triangulations of $\cO$ are connected by a sequence of flips if and only if all their tags at the puncture are the same.
\end{theorem}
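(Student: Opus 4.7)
The plan is to establish the ideal-triangulation version first and then transfer to tagged triangulations via the standard sign-at-puncture bookkeeping. Concretely, I would first prove that any two ideal triangulations of $\cO$ are connected by a sequence of flips, where now flips are allowed at every ideal arc except those lying strictly inside a self-folded triangle. In the case without orbifold points this is the classical theorem of Hatcher, provable by induction on the number of punctures and boundary components together with an elementary "same star" argument. I would extend it to orbifolds by treating each orbifold point as a restricted puncture: the restriction is simply that the only arcs which may touch it are pending arcs, and no two such arcs may be simultaneously incident. This modification is local and preserves the inductive scheme.

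Next, I would introduce the \emph{sign} $\epsilon_p(T)\in\{+,-\}$ of a tagged triangulation $T$ at each puncture $p$, with $\epsilon_p(T)=+$ if all ends of arcs of $T$ at $p$ are tagged plain and $\epsilon_p(T)=-$ otherwise (noting that compatibility forces all tags at $p$ to agree, with the single exception of a pair of conjugate arcs, which I declare to have sign $-$). Given a fixed sign vector $\epsilon$, there is a classical bijection between tagged triangulations with sign vector $\epsilon$ and ordinary ideal triangulations of $\cO$: at each puncture $p$ with $\epsilon_p=-$, every notched end is replaced by the interior edge of the appropriate self-folded triangle. Under this bijection, flips of ideal triangulations correspond exactly to flips of tagged triangulations that preserve $\epsilon$. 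Combined with the first step, this shows that any two tagged triangulations sharing the same sign vector are flip-connected.

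Then I would show that, when $\cO$ is not a closed orbifold with exactly one puncture, the sign at any single puncture $p$ can be flipped while leaving the other signs unchanged. The key local move is: if $p$ lies in a triangle containing either a segment of $\partial S$, another puncture, or an orbifold point, then a single flip of an arc incident to $p$ exchanges a plain tag for a notched tag at $p$ (or creates a pair of conjugate arcs whose further flip propagates the change). By the first step, we can arrange any tagged triangulation to put $p$ into such a favorable triangle, so the sign change can always be realized by a bounded sequence of flips. The subtle point, and what I expect to be the main obstacle, is checking this sign-propagation procedure case-by-case against the local pictures in Figures \ref{Fig:non-exceptional-triangles} and \ref{Fig:exceptional-triangles}, since the presence of pending arcs and of exceptional triangles at an orbifold point means the "push" of a notched tag has to be verified in several configurations.

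Finally, the exceptional case of empty boundary and exactly one puncture $p$ is handled as follows. Every tagged arc is either a loop at $p$ or a pending arc with one end at $p$, so compatibility forces all tags at $p$ to be identical in any tagged triangulation; moreover every triangle of the decomposition touches $p$, so no flip can alter this common tag. Consequently the flip graph decomposes into precisely the two components distinguished by $\epsilon_p$, and within each component flip-connectedness is provided by the previous paragraphs applied to a "quotient" orbifold in which $p$ is filled in (reducing to a marked surface with orbifold structure but no puncture, for which the first step applies).
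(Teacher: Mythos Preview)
The paper does not prove Theorem~\ref{thm:transitivity}; it is quoted without proof from \cite[Theorem~4.2]{FeST12a} and \cite[Theorem~7.9 and Proposition~7.10]{FoST08}. So there is no ``paper's own proof'' to compare against---your sketch is an attempt to reproduce the arguments of those original references, and in outline it matches their strategy (Hatcher-type flip connectivity for ideal triangulations, then transfer to tagged triangulations via the sign-at-puncture bookkeeping, then a local argument to change signs when possible).

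That said, two places in your sketch are imprecise and would need repair before it becomes a proof. First, your sign convention (declaring a conjugate pair at $p$ to have $\epsilon_p=-$) breaks the clean bijection you want with ideal triangulations: under the map $\iota$, a self-folded triangle at $p$ produces a conjugate pair whose tags at $p$ are \emph{mixed}, not uniformly notched, so ``every notched end is replaced by the interior edge of the appropriate self-folded triangle'' is not the correct inverse. The standard fix is to twist by $\epsilon$ first (flip all tags at each puncture with $\epsilon_p=-$) and then observe that the resulting tagged triangulation lies in the image of $\iota$; flips then correspond as you want, but one must check separately that the flips which create or destroy conjugate pairs are accounted for. Second, in the closed one-punctured case your ``quotient orbifold in which $p$ is filled in'' has no marked points in $M$ at all and is therefore not an orbifold in the sense of this paper, so Step~1 does not apply to it. The correct argument here is simpler: within a fixed sign at $p$, twisting by that sign identifies tagged triangulations with ideal triangulations of $\cO$ itself, and Step~1 already gives flip-connectedness of those.
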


\subsection{Laminations on weighted orbifolds and their shear coordinates}\label{subsec:lam}

A \emph{weighted orbifold} $\cO^w$ is an orbifold $\cO=(S,M,Q)$ together with weights of orbifold points given by a map
\[
w : Q \rightarrow \left\{\frac{1}{2},2\right\}.
\]
We also say that a pending arc has \emph{weight $w(p)$} if it is incident to an orbifold point $p$.

A \emph{laminate} of $\cO^w$ is a non-self-intersecting curve in $S$ which is either
\begin{itemize}
\item a closed curve, or
\item a non-closed curve with each end being one of the following:
 \begin{itemize}
 \item an unmarked point on $\partial S$;
 \item an orbifold point with weight $\frac{1}{2}$;
 \item a spiral around a puncture (either clockwise or counterclockwise),
 \end{itemize}
\end{itemize}
and the following curves are not allowed (see Figure \ref{fignonlam}):
\begin{itemize}
\item a curve cutting out a disk with at most one puncture or orbifold point in total;
\item a curve with two endpoints on $\partial S$ such that it is isotopic to a segment of $\partial S$ containing at most one marked point;
\item a curve whose both ends are spirals around a common puncture in the same direction such that it does not enclose anything else;
\item a curve whose both endpoints are a common orbifold point.
\end{itemize}

\begin{figure}[htp]
\begin{minipage}{0.48\textwidth}
\centering
\begin{tikzpicture}[baseline=0mm]
\coordinate(l)at(-1,0.6);\coordinate(r)at(1,0.6);\coordinate(p)at(1,-0.8);\coordinate(q)at(1,-1.5);
\draw(2,2)--(-2,2)--(-2,-2)--(2,-2)--(2,2);\draw[pattern=north east lines](-0.3,-0.8)circle(0.4);
\draw[blue](2,-1.5)--(q);\node at(1.3,-0.8){$p$};\node at(0.7,-1.5){$q$};
\draw[blue](-2,1)arc(-90:0:1);\draw[blue](l)circle(0.4);\draw[blue](-1.4,-1.3)circle(0.4);
\draw[blue](1,1.1)arc(90:-90:0.4);\draw[blue](1,0.3)arc(-90:-280:0.24);
\draw[blue](0.6,0.6)..controls(0.6,1)and(1.2,1)..(1.2,0.6);\draw[blue](1.2,0.6)arc(0:-120:0.18);
\draw[blue](0.6,0.6)arc(-0:-270:0.5);\draw[blue](0.1,1.1)..controls(0.4,1.13)and(0.8,1.13)..(1,1.1);
\draw[blue](1,2)arc(0:-180:0.5);
\draw[blue](p)..controls(0.5,-0.4)and(0,-0.2)..(-0.3,-0.2);\draw[blue](p)..controls(0.5,-1.2)and(0,-1.4)..(-0.3,-1.4);
\draw[blue](-0.3,-0.2)arc(90:270:0.6);
\fill(l)circle(0.07);\fill(r)circle(0.07);\fill(2,2)circle(0.07);\fill(-2,2)circle(0.07);\fill(-2,-2)circle(0.07);\fill(2,-2)circle(0.07);\fill(-0.3,-0.4)circle(0.07);
\node at(p){$\times$};\node at(q){$\times$};
\end{tikzpicture}
\caption{Curves which are not laminates, where $w(p)=\frac{1}{2}$ and $w(q)=2$}
\label{fignonlam}
\end{minipage}
\begin{minipage}{0.48\textwidth}
\centering
\begin{tikzpicture}[baseline=0mm]
\coordinate(l)at(-1,0.6);\coordinate(r)at(1,0.6);\coordinate(p)at(1,-0.8);\coordinate(q)at(1,-1.5);
\draw(2,2)--(-2,2)--(-2,-2)--(2,-2)--(2,2);\draw[pattern=north east lines](-0.3,-0.8)circle(0.4);
\node at(1,-0.5){$p$};\node at(1.3,-1.5){$r$};
\draw[blue](-2,1.5)--(2,1.5);\draw[blue](-0.3,-0.8)circle(0.7);\draw[blue](-0.3,-0.8)circle(0.55);
\draw[blue](-1,0.8)..controls(-0.5,0.8)and(0.5,0.9)..(1,0.9);
\draw[blue](-1,0.8)arc(90:270:0.2);\draw[blue](-1,0.4)arc(-90:60:0.18);
\draw[blue](1,0.9)arc(90:0:0.3);\draw[blue](1.3,0.6)arc(0:-100:0.27);
\draw[blue](2,0)..controls(1,0)and(0.8,0.3)..(0.8,0.6);
\draw[blue](0.8,0.6)arc(180:-100:0.2);
\draw[blue](-1.3,-2)..controls(-1.3,0)and(-0.75,0.2)..(-0.3,0.2);
\draw[blue](0.7,-2)..controls(0.7,0)and(0.15,0.2)..(-0.3,0.2);
\draw[blue](p)--(q);
\fill(l)circle(0.07);\fill(r)circle(0.07);\fill(2,2)circle(0.07);\fill(-2,2)circle(0.07);\fill(-2,-2)circle(0.07);\fill(2,-2)circle(0.07);\fill(-0.3,-0.4)circle(0.07);
\node at(p){$\times$};\node at(q){$\times$};
\end{tikzpicture}
\caption{A lamination, where $w(p)=w(r)=\frac{1}{2}$}
\label{figlam}
\end{minipage}
\end{figure}

We say that two laminates of $\cO^w$ are \emph{compatible} if they do not intersect and they are not incident to a common orbifold point. Note that a laminate with at least one endpoint being an orbifold point is not compatible with itself. A finite collection of pairwise compatible laminates of $\cO^w$ is called a \emph{lamination} on $\cO^w$ (see Figure \ref{figlam}). To $\cO^w$, we associate a marked surface (see \cite[Definition 6.2]{FeST12a} and Figure \ref{fig:hat}).

\begin{definition}\label{def:widehat}
Let $\gamma$ be an ideal/tagged arc of $\cO^w$, $\ell$ a laminate of $\cO^w$, and $C$ a collection of ideal/tagged arcs or laminates of $\cO^w$.
\begin{enumerate}
\item The \emph{associated marked surface} $\hat{\cO}^w=(S,\hat{M},\emptyset)$ is obtained from $\cO^w$ by replacing each orbifold point $p$ with a puncture $\hat{p}$, that is, $M \subseteq \hat{M}$ and $|\hat{M}|=|M|+|Q|$. We also denote by $\hat{o}$ the marked point in $\hat{M}$ corresponding to $o\in M$.
\item If $\gamma$ is a pending ideal (resp., tagged) arc, then $\hat{\gamma}$ is a self-folded triangle (resp., a pair of conjugate arcs) $\{\gamma',\gamma''\}$ of $\hat{\cO}^w$ such that the underlying curves of $\gamma$ and $\gamma'$ coincide. Otherwise, $\hat{\gamma}$ is the corresponding ideal/tagged arc of $\hat{\cO}^w$.
\item A laminate $\hat{\ell}$ of $\hat{\cO}^w$ is obtained from $\ell$ by replacing its ends incident to an orbifold point $p$ with spirals around $\hat{p}$ clockwise if they exist,.
\item A collection $\hat{C}$ of ideal/tagged arcs or laminates of $\hat{\cO}^w$ is obtained from $C$ by replacing $c \in C$ with $\hat{c}$.
\end{enumerate}
\end{definition}

\begin{figure}[htp]
\begin{tikzpicture}[baseline=4mm]
\coordinate(d)at(0,0);\coordinate(p)at(0,1);
\draw(d)--node[right]{$\gamma$}(p)node[above]{$p$};
\fill(d)circle(0.07);\node at(p){$\times$};
\end{tikzpicture}
\hspace{2mm}$\xrightarrow{\widehat{(-)}}$\hspace{2mm}
\begin{tikzpicture}[baseline=4mm]
\coordinate(d)at(0,0);\coordinate(p)at(0,1);
\draw(d)--node[fill=white,inner sep=1]{$\gamma'$}(p);
\draw(d)to[out=30,in=0]node[right]{$\gamma''$}(0,1.6);\draw(d)to[out=150,in=180](0,1.6);
\fill(d)circle(0.07);\fill(p)circle(0.07)node[above]{$\hat{p}$};
\end{tikzpicture}
\hspace{1mm}or\hspace{1mm}
\begin{tikzpicture}[baseline=4mm]
\coordinate(d)at(0,0);\coordinate(p)at(0,1);
\draw(d)--node[left]{$\gamma'$}(p);
\draw(d)to[out=10,in=-10]node[right]{$\gamma''$}node[pos=0.8]{\rotatebox{40}{\footnotesize $\bowtie$}}(p);
\fill(d)circle(0.07);\fill(p)circle(0.07)node[above]{$\hat{p}$};
\end{tikzpicture}
\hspace{13mm}
\begin{tikzpicture}[baseline=4mm]
\coordinate(d)at(0,0.2);\coordinate(dd)at(0,-0.2);\coordinate(p)at(0,1);
\draw(d)--node[right]{$\ell$}(p)node[above]{$p$};
\node at(p){$\times$};\draw[dotted](d)--(dd);
\end{tikzpicture}
\hspace{2mm}$\xrightarrow{\widehat{(-)}}$\hspace{2mm}
\begin{tikzpicture}[baseline=4mm]
\coordinate(d)at(0,0.2);\coordinate(dd)at(0,-0.2);\coordinate(p)at(0,1);
\draw(d)..controls(0,0.5)and(-0.2,0.7)..(-0.2,1);\draw(-0.2,1)arc(180:-110:0.2);
\fill(p)circle(0.07);\draw[dotted](d)--(dd);
\node at(0.3,0.6){$\hat{\ell}$};\node at(0,1.4){$\hat{p}$};
\end{tikzpicture}
\caption{Applying the map $\widehat{(-)}$ for a pending arc $\gamma$ and a laminate $\ell$, where $\hat{\gamma}=\{\gamma',\gamma''\}$}
\label{fig:hat}
\end{figure}
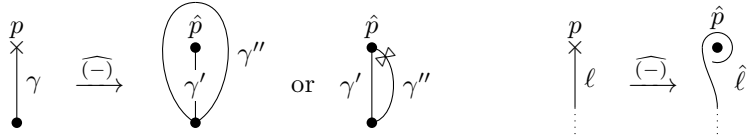

\begin{proposition}\label{prop:widehat well-def}
Let $T$ be a partial tagged triangulation and $L$ a lamination of $\cO^w$. Then $\hat{T}$ is a partial tagged triangulation of $\hat{\cO}^w$ and $\hat{L}$ is a lamination of $\hat{\cO}^w$. In addition, if $T$ is a tagged triangulation, then $\hat{T}$ is so.
\end{proposition}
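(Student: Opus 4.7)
My plan is to verify in order: (i) each element of $\hat{T}$ is a tagged arc of $\hat{\cO}^w$ and $\hat{T}$ is pairwise compatible; (ii) each $\hat\ell$ is a laminate of $\hat{\cO}^w$ and $\hat L$ is pairwise compatible; and (iii) maximality of $\hat T$ when $T$ is a tagged triangulation.

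For (i), I do a case analysis on $\gamma\in T$. A non-pending tagged arc of $\cO^w$ is directly a tagged arc of $\hat{\cO}^w$. A pending tagged arc $\gamma$ with endpoint at an orbifold point $p$ lifts to a pair of conjugate arcs $\hat{\gamma}=\{\gamma',\gamma''\}$ at $\hat p$; both $\gamma'$ and $\gamma''$ are tagged arcs of $\hat{\cO}^w$ and are mutually compatible by the very definition of a pair of conjugate arcs. For compatibility of distinct $\hat\gamma$ and $\hat\delta$: the underlying ideal arcs $\gamma^\circ,\delta^\circ$ do not meet in the interior of $S$, and since $\widehat{(-)}$ is a local modification near orbifold points, $\hat\gamma$ and $\hat\delta$ can interact only at endpoints. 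The hypothesis that compatible arcs in $\cO^w$ share no orbifold point eliminates the only problematic case, namely two distinct pending arcs at the same $p$. Matching of tags at a shared endpoint in $M$ is preserved by $\widehat{(-)}$; and if $\gamma^\circ=\delta^\circ$ then $\{\gamma,\delta\}$ is itself a pair of conjugate arcs in $\cO^w$, whose lift is again a compatible pair in $\hat{\cO}^w$.

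For (ii), the replacement of an orbifold endpoint of $\ell$ by a clockwise spiral is local near that point, so $\hat\ell$ inherits non-self-intersection from $\ell$. The four forbidden configurations for laminates on the marked surface $\hat{\cO}^w$ correspond under $\widehat{(-)}$ to forbidden configurations on $\cO^w$: a curve bounding a disk with one puncture at $\hat p$ in $\hat{\cO}^w$ would come from one bounding a disk with the orbifold point $p$ in $\cO^w$; two spirals at a common $\hat p$ in the same direction would come from two laminates sharing an orbifold point, which is disallowed by pairwise compatibility in $L$; and a curve with both endpoints at a common $\hat p$ cannot occur since the two ends would already be spirals. Pairwise compatibility of $\hat L$ then follows because $\widehat{(-)}$ introduces no new crossings away from orbifold points, and near each $p$ at most one laminate of $L$ can have an end at $p$.

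For (iii), a counting argument suffices: since $\widehat{(-)}$ fixes non-pending arcs and doubles pending ones, $|\hat T|=|T|+|T_p|$, where $T_p$ is the set of pending arcs in $T$. Combining the cardinality formulas for tagged triangulations of weighted orbifolds \cite{FeST12a} and of marked surfaces \cite{FoST08} with $|\hat M|=|M|+|Q|$, the resulting number matches the cardinality of any tagged triangulation of $\hat{\cO}^w$, forcing the already-compatible set $\hat T$ to be maximal. The step I expect to require the most care is keeping this count consistent across the exceptional triangles of Figure \ref{Fig:exceptional-triangles}, where multiple pending arcs appear in a single triangle; once that bookkeeping is done, the rest reduces to the local verifications above.
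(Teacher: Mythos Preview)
Your outline for (i) and (ii) is essentially an expanded version of the paper's one-line claim that $\widehat{(-)}$ preserves compatibility, and is correct in spirit. One misstep in (ii): when you check the forbidden configuration ``both ends of $\hat\ell$ spiral around a common $\hat p$ in the same direction, enclosing nothing else'', you attribute its exclusion to \emph{pairwise compatibility of two laminates in $L$}. That is not the right reason. This configuration concerns a single laminate $\hat\ell$, and it arises precisely when the original $\ell$ has both endpoints at the same orbifold point $p$; such a curve is already excluded by the fourth forbidden clause in the definition of a laminate of $\cO^w$ (``a curve whose both endpoints are a common orbifold point''), not by any compatibility condition in $L$. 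Once you fix this attribution, (ii) goes through.

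For (iii) your route genuinely differs from the paper's. The paper observes directly that $\widehat{(-)}$ carries each triangle of $T$ (as listed in Figures~\ref{Fig:non-exceptional-triangles} and~\ref{Fig:exceptional-triangles}) to a triangle of $\hat{\cO}^w$; since $T$ decomposes $\cO^w$ into triangles, $\hat T$ decomposes $\hat{\cO}^w$ into triangles, hence is maximal. Your counting argument (via $|\hat T|=|T|+|Q|$ and the rank formulas from \cite{FeST12a,FoST08}) is a valid alternative, and has the advantage of being purely numerical; but it imports external cardinality formulas and requires the auxiliary fact that every pairwise compatible set of the expected size is maximal. The paper's triangle-to-triangle check is more self-contained and simultaneously handles the exceptional triangles you flagged as the delicate case, since each of those is visibly sent to another triangle on the same list.
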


\begin{proof}
The map $\widehat{(-)}$ keeps the compatibility of tagged arcs and laminates, and replaces triangles of $\cO^w$ with triangles of $\hat{\cO}^w$ (see Figures \ref{Fig:non-exceptional-triangles} and \ref{Fig:exceptional-triangles}). Thus the assertions hold.
\end{proof}

Let $\ell$ be a laminate of $\cO^w$. For an ideal/tagged triangulation $T$ of $\cO^w$, we define the \emph{shear coordinate $b_{\gamma,T}(\ell)$ of $\ell$} with respect to $\gamma \in T$ (see \cite[Section 6]{FeST12a} and \cite[Definitions 12.2 and 13.1]{FT18}).

First, we assume that $\cO^w$ has no orbifold points and $T$ is an ideal triangulation. If $\gamma \in T$ is not inside a self-folded triangle of $T$, then $b_{\gamma,T}(\ell)$ is defined by a sum of contributions from all intersections of $\gamma$ and $\ell$ as follows: Such an intersection contributes $+1$ (resp., $-1$) to $b_{\gamma,T}(\ell)$ if a segment of $\ell$ cuts through the quadrilateral surrounding $\gamma$ as in the left (resp., right) diagram of Figure \ref{fig:SZ}. Suppose that $\gamma \in T$ is inside a self-folded triangle $\{\gamma,\gamma'\}$ of $T$, where $\gamma'$ is a loop enclosing exactly one puncture $p$. Then we define $b_{\gamma,T}(\ell) = b_{\gamma',T}(\ell^{(p)})$, where $\ell^{(p)}$ is a laminate obtained from $\ell$ by changing the directions of its spirals at $p$ if they exist.

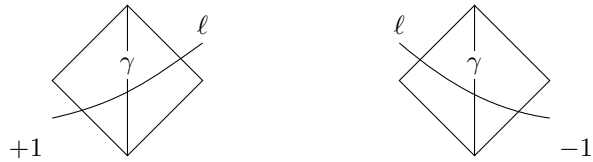
\begin{figure}[htp]
\centering
$+1$
\begin{tikzpicture}[baseline=0mm]
\coordinate(u)at(0,2);
\coordinate(l)at(-1,1);
\coordinate(r)at(1,1);
\coordinate(d)at(0,0);
\coordinate(s)at(-1,0.5);
\coordinate(t)at(1,1.5);
\draw(u)--(l)--(d)--(r)--(u)--node[fill=white,inner sep=2,pos=0.4]{$\gamma$}(d);
\draw(s)..controls (-0.1,0.7)and(0.3,1)..(t)node[above]{$\ell$};
\end{tikzpicture}
   \hspace{20mm}
\begin{tikzpicture}[baseline=0mm]
\coordinate(u)at(0,2);
\coordinate(l)at(-1,1);
\coordinate(r)at(1,1);
\coordinate(d)at(0,0);
\coordinate(s)at(-1,1.5);
\coordinate(t)at(1,0.5);
\draw(u)--(l)--(d)--(r)--(u)--node[fill=white,inner sep=2,pos=0.4]{$\gamma$}(d);
\draw(s)node[above]{$\ell$}..controls (-0.3,0.9)and(0.3,0.6)..(t);
\end{tikzpicture}
 $-1$
\caption{The contribution from a segment of the laminate $\ell$ on the left (resp., right) is $+1$ (resp., $-1$)}
\label{fig:SZ}
\end{figure}

Next, we assume that $\cO^w$ has no orbifold points and $T$ is a tagged triangulation. If there is an ideal triangulation $T^0$ such that $T=\iota(T^0)$, then we define $b_{\gamma,T}(\ell) = b_{\gamma^0,T^0}(\ell)$, where $\gamma=\iota(\gamma^0)$. For an arbitrary $T$, we can obtain a tagged triangulation $T^{(p_1\cdots p_m)}$ from $T$ by changing all tags at punctures $p_1,\ldots,p_m$ (possibly $m=0$), in such a way that there is a unique ideal triangulation $T^0$ such that $T^{(p_1\cdots p_m)}=\iota(T^0)$ (see \cite[Remark 3.11]{MSW11}). Then we define $b_{\gamma,T}(\ell) = b_{\gamma^{(p_1\cdots p_m)},T^{(p_1\cdots p_m)}}\bigl((\cdots((\ell^{(p_1)})^{(p_2)})\cdots)^{(p_m)}\bigr)$, where $\gamma^{(p_1\cdots p_m)}$ is a tagged arc of $T^{(p_1\cdots p_m)}$ corresponding to $\gamma$.

Finally, we consider any $\cO^w$ and $T$. Since the associated marked surface $\hat{\cO}^w$ has no orbifold points, we can define
\begin{equation}\label{eq:shear coordinates}
b_{\gamma,T}(\ell)=\left\{
\begin{array}{ll}
 b_{\hat{\gamma},\hat{T}}(\hat{\ell}) & \text{if $\gamma$ is not a pending arc},\\
 b_{\gamma',\hat{T}}(\hat{\ell})+b_{\gamma'',\hat{T}}(\hat{\ell}) & \text{if $\gamma$ is a pending arc with weight $\frac{1}{2}$ and $\hat{\gamma}=\{\gamma',\gamma''\}$},\\
 \frac{1}{2}\bigl(b_{\gamma',\hat{T}}(\hat{\ell})+b_{\gamma'',\hat{T}}(\hat{\ell})\bigr) & \text{if $\gamma$ is a pending arc with weight $2$ and $\hat{\gamma}=\{\gamma',\gamma''\}$}.
\end{array}\right.
\end{equation}

For a collection $L=L' \sqcup \{\ell\}$ of laminates of $\cO^w$, the \emph{shear coordinate $b_{\gamma,T}(L)$ of $L$} with respect to $\gamma \in T$ is inductively defined by
\[
 b_{\gamma,T}(L) = b_{\gamma,T}(L') + b_{\gamma,T}(\ell).
\]
We denote by $b_T(L)$ the vector $(b_{\gamma,T}(L))_{\gamma \in T} \in \bZ^{|T|}$, and by $C(L)$ the cone in $\bR^{|T|}$ spanned by $b_T(\ell)$ for all $\ell \in L$, that is, it is given by $C(L)=\{\sum_{\ell\in L}a_{\ell}b_T(\ell)\mid a_{\ell}\in\bR_{\ge0}\}$.

\begin{theorem}[{\cite[Theorem 6.7]{FeST12a}\cite[Theorems 12.3 and 13.6]{FT18}}]\label{thm:thurston}
Let $T$ be a tagged triangulation of $\cO^w$. The map sending laminations $L$ to $b_T(L)$ induces a bijection between the set of laminations on $\cO^w$ and $\bZ^{|T|}$.
\end{theorem}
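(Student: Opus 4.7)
The plan is to reduce to the marked-surface case via the associated marked surface $\hat{\cO}^w$ and invoke the classical Fomin--Thurston theorem (Theorems 12.3 and 13.6 of \cite{FT18}), which gives a bijection $\hat{L} \mapsto b_{\hat T}(\hat L)$ between laminations on $\hat{\cO}^w$ and $\bZ^{|\hat T|}$. Since $\hat{\cO}^w$ has no orbifold points, it is a marked surface to which that theory directly applies, and by Proposition \ref{prop:widehat well-def} the associated $\hat T$ is indeed a tagged triangulation of $\hat{\cO}^w$.

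First I would show that $\widehat{(-)}$ restricts to a bijection between laminations on $\cO^w$ and the subset $\mathcal{L}$ of laminations on $\hat{\cO}^w$ characterized by the following condition at every $p\in Q$: (i) no component spirals counterclockwise at $\hat p$, and (ii) no component touches $\hat p$ at all when $w(p)=2$. The inverse replaces each clockwise spiral at $\hat p$ (possible only when $w(p)=\tfrac12$) by an endpoint at the orbifold point $p$, and leaves closed curves and curves avoiding $\hat p$ unchanged. Composing with Fomin--Thurston, laminations on $\cO^w$ are in bijection with $\mathcal{I}:=b_{\hat T}(\mathcal{L})\subseteq \bZ^{|\hat T|}$.

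Second, for each pending arc $\gamma\in T$ at an orbifold point $p$, I would analyse the pair of shear coordinates $(b_{\gamma',\hat T}(\hat\ell),b_{\gamma'',\hat T}(\hat\ell))$ attached to the pair of conjugate arcs $\hat\gamma=\{\gamma',\gamma''\}$ (differing only in their tag at $\hat p$), as $\hat\ell$ ranges over laminates in $\mathcal{L}$. When $w(p)=2$, $\hat\ell$ does not touch $\hat p$, forcing $b_{\gamma',\hat T}(\hat\ell)=b_{\gamma'',\hat T}(\hat\ell)$, so the factor $\tfrac12$ in (\ref{eq:shear coordinates}) produces an integer and the pair is recoverable from this integer. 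When $w(p)=\tfrac12$, a direct local computation shows that imposing ``no counterclockwise spirals at $\hat p$'' cuts out a sublattice of $\bZ^2$ in the $(\gamma',\gamma'')$-coordinates on which the summation $(b_{\gamma'},b_{\gamma''})\mapsto b_{\gamma'}+b_{\gamma''}$ is a bijection onto $\bZ$. Consequently (\ref{eq:shear coordinates}) defines a linear bijection $\mathcal{I}\to\bZ^{|T|}$, and the composition
\[
\{\text{laminations on }\cO^w\}\xrightarrow{\widehat{(-)}}\mathcal{L}\xrightarrow{b_{\hat T}}\mathcal{I}\xrightarrow{(\ref{eq:shear coordinates})}\bZ^{|T|}
\]
is the desired bijection.

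The main obstacle is the weight-$\tfrac12$ pending-arc analysis: one must pin down precisely how clockwise spirals of $\hat\ell$ at $\hat p$ contribute to each of $b_{\gamma',\hat T}$ and $b_{\gamma'',\hat T}$, given that the only local difference between $\gamma'$ and $\gamma''$ is the bowtie at $\hat p$. The careful case analysis — no spiral at $\hat p$ versus clockwise spirals with various multiplicities, combined with the behaviour of other arcs of $\hat T$ incident to $\hat p$ — is what forces $\mathcal{I}$ to have exactly the right shape for summation to be an isomorphism onto $\bZ$, and thereby establishes the bijection.
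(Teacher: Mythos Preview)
The paper does not prove this theorem; it is quoted from \cite{FeST12a} and \cite{FT18} as a known result, so there is no in-paper proof to compare against. Your reduction to the marked-surface case via $\widehat{(-)}$ and Fomin--Thurston is the natural strategy and is essentially how \cite{FeST12a} proceeds.

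There is, however, a genuine gap in your weight-$\tfrac12$ analysis. Your description of $\mathcal{L}$ requires only that no component of $\hat L$ spiral \emph{counterclockwise} at $\hat p$, but this is not the image of $\widehat{(-)}$. In $\cO^w$, two laminates sharing an orbifold endpoint are incompatible, and a single laminate with both endpoints at the same orbifold point is forbidden; hence a lamination on $\cO^w$ has \emph{at most one end in total} at each orbifold point $p$, so $\hat L$ has at most one (clockwise) spiral end at $\hat p$. Under your weaker condition, the constraint on $(b_{\gamma'},b_{\gamma''})$ is only $b_{\gamma'}-b_{\gamma''}\ge 0$, a half-space (not a sublattice) on which the sum map is not injective: for instance $(2,0)$ and $(1,1)$ both map to $2$. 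So the bijection you assert fails as stated.

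With the correct characterisation the constraint becomes $b_{\gamma'}-b_{\gamma''}\in\{0,1\}$: one checks, as in Example~\ref{ex:digon}, that each clockwise spiral end at $\hat p$ contributes $+1$ to this difference and each counterclockwise end $-1$, while laminates avoiding $\hat p$ contribute $0$. On the set $\{(a,b)\in\bZ^2:a-b\in\{0,1\}\}$ the map $(a,b)\mapsto a+b$ \emph{is} a bijection onto $\bZ$ (the case $a=b$ gives the even integers, the case $a=b+1$ the odd ones). With this amendment your argument goes through; as written, the key step is incorrect.
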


\begin{example}[{\cite[Example 2.4]{Y20}}]\label{ex:digon}
For a digon $\cO^w$ with exactly one puncture $p$ and no orbifold points, all laminates are given as follows:
\[
\begin{tikzpicture}[baseline=0mm]
\coordinate(c)at(0,0);\coordinate(u)at(0,1);\coordinate(d)at(0,-1);
\draw[blue](0.56,0.3)to[out=180,in=90](-0.2,0);
\draw[blue](-0.2,0)arc(-180:60:0.2);
\draw(d)to[out=180,in=180](u);\draw(d)to[out=0,in=0](u);
\fill(c)circle(0.07);\fill(u)circle(0.07);\fill(d)circle(0.07);
\node[blue]at(0,0.5){$\ell_1$};
\end{tikzpicture}
\hspace{5mm}
\begin{tikzpicture}[baseline=0mm]
\coordinate(c)at(0,0);\coordinate(u)at(0,1);\coordinate(d)at(0,-1);
\draw[blue](0.56,0.3)to[out=180,in=90](-0.3,0);
\draw[blue](-0.3,0)to[out=-90,in=180](0.56,-0.3);
\draw(d)to[out=180,in=180](u);\draw(d)to[out=0,in=0](u);
\fill(c)circle(0.07);\fill(u)circle(0.07);\fill(d)circle(0.07);
\node[blue]at(0,0.5){$\ell_2$};
\end{tikzpicture}
\hspace{5mm}
\begin{tikzpicture}[baseline=0mm]
\coordinate(c)at(0,0);\coordinate(u)at(0,1);\coordinate(d)at(0,-1);
\draw[blue](0.56,-0.3)to[out=180,in=-90](-0.2,0);
\draw[blue](-0.2,0)arc(180:-60:0.2);
\draw(d)to[out=180,in=180](u);\draw(d)to[out=0,in=0](u);
\fill(c)circle(0.07);\fill(u)circle(0.07);\fill(d)circle(0.07);
\node[blue]at(0,0.5){$\ell_3$};
\end{tikzpicture}
\hspace{5mm}
\begin{tikzpicture}[baseline=0mm]
\coordinate(c)at(0,0);\coordinate(u)at(0,1);\coordinate(d)at(0,-1);
\draw[blue](-0.56,0.3)to[out=0,in=90](0.2,0);
\draw[blue](0.2,0)arc(0:-240:0.2);
\draw(d)to[out=180,in=180](u);\draw(d)to[out=0,in=0](u);
\fill(c)circle(0.07);\fill(u)circle(0.07);\fill(d)circle(0.07);
\node[blue]at(0,0.5){$\ell_4$};
\end{tikzpicture}
\hspace{5mm}
\begin{tikzpicture}[baseline=0mm]
\coordinate(c)at(0,0);\coordinate(u)at(0,1);\coordinate(d)at(0,-1);
\draw[blue](-0.56,0.3)to[out=0,in=90](0.3,0);
\draw[blue](0.3,0)to[out=-90,in=0](-0.56,-0.3);
\draw(d)to[out=180,in=180](u);\draw(d)to[out=0,in=0](u);
\fill(c)circle(0.07);\fill(u)circle(0.07);\fill(d)circle(0.07);
\node[blue]at(0,0.5){$\ell_5$};
\end{tikzpicture}
\hspace{5mm}
\begin{tikzpicture}[baseline=0mm]
\coordinate(c)at(0,0);\coordinate(u)at(0,1);\coordinate(d)at(0,-1);
\draw[blue](-0.56,-0.3)to[out=0,in=-90](0.2,0);
\draw[blue](0.2,0)arc(0:240:0.2);
\draw(d)to[out=180,in=180](u);\draw(d)to[out=0,in=0](u);
\fill(c)circle(0.07);\fill(u)circle(0.07);\fill(d)circle(0.07);
\node[blue]at(0,0.5){$\ell_6$};
\end{tikzpicture}
\]
We consider the tagged triangulation
\[
 T=
\begin{tikzpicture}[baseline=-1mm]
\coordinate(c)at(0,0);\coordinate(u)at(0,1);\coordinate(d)at(0,-1);
\draw(d)to[out=180,in=180](u);\draw(d)to[out=0,in=0](u);
\draw(d)to[out=150,in=-150]node[fill=white,inner sep=2]{$1$}(c);
\draw(d)to[out=30,in=-30]node[pos=0.8]{\rotatebox{40}{\footnotesize $\bowtie$}}node[fill=white,inner sep=2]{$2$}(c);
\fill(c)circle(0.07);\fill(u)circle(0.07);\fill(d)circle(0.07);
\end{tikzpicture}
\text{, where}\hspace{2mm}
 T^0=
\begin{tikzpicture}[baseline=-1mm]
\coordinate(c)at(0,0);\coordinate(u)at(0,1);\coordinate(d)at(0,-1);
\draw(d)to[out=180,in=180](u);\draw(d)to[out=0,in=0](u);
\draw(d)--node[pos=0.6,fill=white,inner sep=1]{$1^0$}(c);
\draw(d)..controls(0.5,-0.3)and(0.3,0.3)..(0,0.3);\draw(d)..controls(-0.5,-0.3)and(-0.3,0.3)..(0,0.3)node[above]{$2^0$};
\fill(c)circle(0.07);\fill(u)circle(0.07);\fill(d)circle(0.07);
\end{tikzpicture}.
\]
The shear coordinate $b_{2,T}(\ell_1)$ is given by $b_{2^0,T^0}(\ell_1)=-1$. Since $\ell_3^{(p)}=\ell_1$, we have the equalities
\[
b_{1,T}(\ell_3) = b_{1^0,T^0}(\ell_3) = b_{2^0,T^0}(\ell_3^{(p)}) = b_{2^0,T^0}(\ell_1) = -1.
\]
Similarly, for $i \in \{1,2\}$ and $j \in \{1,\ldots,6\}$, the shear coordinates $b_{i,T}(\ell_j)$ and $b_{T}(\ell_j)$ are given as follows:
\[
\def\arraystretch{1.5}
\begin{tabular}{c||c|c|c|c|c|c}
 $i$\ \textbackslash\ $j$ & $1$ & $2$ & $3$ & $4$ & $5$ & $6$ \\\hline\hline
 $1$ & $0$ & $-1$ & $-1$ & $0$ & $1$ & $1$ \\\hline
 $2$ & $-1$ & $-1$ & $0$ & $1$ & $1$ & $0$
\end{tabular}
\hspace{10mm}
\begin{tikzpicture}[baseline=0mm,scale=1]
\coordinate (0) at (0,0);\coordinate (x) at (1,0);\coordinate (-x) at (-1,0);
\coordinate (y) at (0,1);\coordinate (-y) at (0,-1);
\draw[->] (0)--(x) node[right]{$b_T(\ell_6)$};
\draw (0)--(-x) node[left]{$b_T(\ell_3)$};
\draw[->] (0)--(y) node[above]{$b_T(\ell_4)$};
\draw (0)--(-y) node[below]{$b_T(\ell_1)$};
\draw (0)--(1,1) node[right]{$b_T(\ell_5)$};
\draw (0)--(-1,-1) node[left]{$b_T(\ell_2)$};
\end{tikzpicture}
\]
All laminations on $\cO^w$ are given by $\{m \ell_j,n \ell_{j+1}\}$ for $j \in \{1,\ldots,6\}$ and $m, n \in \bZ_{\ge 0}$, where $\ell_7 = \ell_1$. Since $C(\{\ell_{j-1},\ell_j\}) \cap C(\{\ell_j,\ell_{j+1}\})=C(\{\ell_j\})$ and $b_T$ induces a bijection
\[
b_T : \{\{m \ell_j,n \ell_{j+1}\} \mid m, n \in \bZ_{\ge 0}\} \leftrightarrow C(\{\ell_j,\ell_{j+1}\}) \cap  \bZ^2,
\]
there is a bijection between the set of laminations on $\cO^w$ and $\bZ^2$.
\end{example}

\begin{example}\label{ex:monogon}
Let $\cO^w$ be a monogon with no punctures and exactly two orbifold points $p$ and $q$ such that $w(p)=2$ and $w(q)=\frac{1}{2}$. All laminates of $\cO^w$ are given as follows:
\[
\begin{tabular}{cccccccccc}
\multicolumn{3}{c}{
$\cO=$
\begin{tikzpicture}[baseline=-1mm,scale=1]
\coordinate(d)at(0,-1);\coordinate(l)at(-0.4,0);\coordinate(r)at(0.4,0);
\draw(0,0)circle(1);\fill(d)circle(0.07);\node at(l){$\times$};\node at(r){$\times$};\node at(-0.4,0.3){$p$};\node at(0.4,0.3){$q$};
\end{tikzpicture}
}&
\begin{tikzpicture}[baseline=-1mm,scale=0.7]
\coordinate(d)at(0,-1);\coordinate(l)at(-0.4,0);\coordinate(r)at(0.4,0);
\draw[blue](-0.7,0)..controls(-0.7,0.7)and(0.7,0.7)..(0.7,0);\draw[blue](-0.7,0)..controls(-0.7,-0.7)and(0.7,-0.7)..(0.7,0);
\draw(0,0)circle(1);\fill(d)circle(0.1);\node at(l){$\times$};\node at(r){$\times$};\node at(0,1.3){$c$};
\end{tikzpicture}
&&$\cdots$&
\begin{tikzpicture}[baseline=-1mm,scale=0.7]
\coordinate(d)at(0,-1);\coordinate(l)at(-0.4,0);\coordinate(r)at(0.4,0);\coordinate(ld)at(-120:1);\coordinate(rd)at(-60:1);
\draw[blue](r)--(ld);
\draw(0,0)circle(1);\fill(d)circle(0.1);\node at(l){$\times$};\node at(r){$\times$};\node at(0,1.3){$r_{-1}$};\node at(0,-1.3){};
\end{tikzpicture}
&
\begin{tikzpicture}[baseline=-1mm,scale=0.7]
\coordinate(d)at(0,-1);\coordinate(l)at(-0.4,0);\coordinate(r)at(0.4,0);\coordinate(ld)at(-120:1);\coordinate(rd)at(-60:1);
\draw[blue](r)--(rd);
\draw(0,0)circle(1);\fill(d)circle(0.1);\node at(l){$\times$};\node at(r){$\times$};\node at(0,1.3){$r_0$};
\end{tikzpicture}
&
\begin{tikzpicture}[baseline=-1mm,scale=0.7]
\coordinate(d)at(0,-1);\coordinate(l)at(-0.4,0);\coordinate(r)at(0.4,0);\coordinate(ld)at(-120:1);\coordinate(rd)at(-60:1);
\draw[blue](r)..controls(0.4,0.7)and(-0.7,0.7)..(-0.7,0);\draw[blue](-0.7,0)..controls(-0.7,-0.5)and(0,-0.7)..(rd);
\draw(0,0)circle(1);\fill(d)circle(0.1);\node at(l){$\times$};\node at(r){$\times$};\node at(0,1.3){$r_1$};
\end{tikzpicture}
&$\cdots$
\\
$\cdots$&
\begin{tikzpicture}[baseline=-1mm,scale=0.7]
\coordinate(d)at(0,-1);\coordinate(l)at(-0.4,0);\coordinate(r)at(0.4,0);\coordinate(lld)at(-130:1);\coordinate(ld)at(-120:1);
\draw[blue](ld)..controls(-0.3,-0.5)and(-0.1,-0.2)..(-0.1,0);\draw[blue](lld)to[out=95,in=-90](-0.7,0);
\draw[blue](-0.1,0)arc(0:180:0.3);
\draw(0,0)circle(1);\fill(d)circle(0.1);\node at(l){$\times$};\node at(r){$\times$};\node at(0,1.3){$l_{-1}$};
\end{tikzpicture}
&
\begin{tikzpicture}[baseline=-1mm,scale=0.7]
\coordinate(d)at(0,-1);\coordinate(l)at(-0.4,0);\coordinate(r)at(0.4,0);\coordinate(rrd)at(-50:1);\coordinate(rd)at(-60:1);
\draw[blue](rd)..controls(0,-0.8)and(-0.7,-0.4)..(-0.7,0);\draw[blue](rrd)..controls(0,-0.3)and(0,0.3)..(-0.4,0.3);
\draw[blue](-0.7,0)arc(180:90:0.3);
\draw(0,0)circle(1);\fill(d)circle(0.1);\node at(l){$\times$};\node at(r){$\times$};\node at(0,1.3){$l_0$};
\end{tikzpicture}
&
\begin{tikzpicture}[baseline=-1mm,scale=0.7]
\coordinate(d)at(0,-1);\coordinate(l)at(-0.4,0);\coordinate(r)at(0.4,0);\coordinate(rrd)at(-50:1);\coordinate(rd)at(-60:1);
\draw[blue](-0.7,0)..controls(-0.7,-0.7)and(0.8,-0.7)..(0.8,0);\draw[blue](0.8,0)..controls(0.8,0.9)and(-0.9,0.9)..(-0.9,0);
\draw[blue](-0.9,0)..controls(-0.9,-0.7)and(0,-0.8)..(rd);
\draw[blue](-0.1,0)..controls(-0.1,-0.5)and(0.7,-0.5)..(0.7,0);\draw[blue](0.7,0)..controls(0.7,0.8)and(-0.8,0.8)..(-0.8,0);
\draw[blue](-0.8,0)..controls(-0.8,-0.6)and(0,-0.7)..(rrd);
\draw[blue](-0.1,0)arc(0:180:0.3);
\draw(0,0)circle(1);\fill(d)circle(0.1);\node at(l){$\times$};\node at(r){$\times$};\node at(0,1.3){$l_1$};
\end{tikzpicture}
&$\cdots$&$\cdots$&
\begin{tikzpicture}[baseline=-1mm,scale=0.7]
\coordinate(d)at(0,-1);\coordinate(l)at(-0.4,0);\coordinate(r)at(0.4,0);\coordinate(ld)at(-120:1);\coordinate(lld)at(-130:1);
\draw[blue](ld)..controls(0,-0.8)and(0.7,-0.4)..(0.7,0);\draw[blue](lld)..controls(0,-0.3)and(0,0.3)..(0.4,0.3);
\draw[blue](0.7,0)arc(0:90:0.3);
\draw(0,0)circle(1);\fill(d)circle(0.1);\node at(l){$\times$};\node at(r){$\times$};\node at(0,1.35){$r_{-1}'$};
\end{tikzpicture}
&
\begin{tikzpicture}[baseline=-1mm,scale=0.7]
\coordinate(d)at(0,-1);\coordinate(l)at(-0.4,0);\coordinate(r)at(0.4,0);\coordinate(rrd)at(-50:1);\coordinate(rd)at(-60:1);
\draw[blue](rd)..controls(0.3,-0.5)and(0.1,-0.2)..(0.1,0);\draw[blue](rrd)to[out=85,in=-90](0.7,0);
\draw[blue](0.1,0)arc(180:0:0.3);
\draw(0,0)circle(1);\fill(d)circle(0.1);\node at(l){$\times$};\node at(r){$\times$};\node at(0,1.35){$r_0'$};
\end{tikzpicture}
&
\begin{tikzpicture}[baseline=-1mm,scale=0.7]
\coordinate(d)at(0,-1);\coordinate(l)at(-0.4,0);\coordinate(r)at(0.4,0);\coordinate(rrd)at(-50:1);\coordinate(rd)at(-60:1);
\draw[blue](0.7,0)..controls(0.7,0.9)and(-0.8,0.9)..(-0.8,0);\draw[blue](-0.8,0)..controls(-0.8,-0.5)and(0,-0.7)..(rd);
\draw[blue](0.1,0)..controls(0.1,0.6)and(-0.7,0.6)..(-0.7,0);\draw[blue](-0.7,0)..controls(-0.7,-0.4)and(0,-0.6)..(rrd);
\draw[blue](0.1,0)arc(-180:0:0.3);
\draw(0,0)circle(1);\fill(d)circle(0.1);\node at(l){$\times$};\node at(r){$\times$};\node at(0,1.35){$r_1'$};
\end{tikzpicture}
&
$\cdots$
\end{tabular}
\]
where $l_i=\sT_c^i(l_0)$, $r_i=\sT_c^i(r_0)$, and $r_i'=\sT_c^i(r_0')$ for $i\in\bZ$ and the Dehn twist $\sT_c$ along $c$ (see Subsection \ref{subsec:Dehn}). We consider the tagged triangulation
\[
 T=
\begin{tikzpicture}[baseline=-1mm]
\coordinate(d)at(0,-1);\coordinate(l)at(-0.4,0);\coordinate(r)at(0.4,0);
\draw(l)--node[fill=white,inner sep=2,pos=0.4]{$1$}(d)--node[fill=white,inner sep=2,pos=0.6]{$2$}(r);
\draw(0,0)circle(1);\fill(d)circle(0.07);\node at(l){$\times$};\node at(r){$\times$};
\end{tikzpicture}
\ \text{, where}\hspace{2mm}
\hat{T}=
\begin{tikzpicture}[baseline=-1mm]
\coordinate(d)at(0,-1);\coordinate(l)at(-0.4,0);\coordinate(r)at(0.4,0);
\draw(0,0)circle(1);\draw(l)--(d)--(r);
\draw(l)to[out=-80,in=-100,relative]node[pos=0.2]{\rotatebox{-25}{\footnotesize $\bowtie$}}(d);
\draw(r)to[out=80,in=100,relative]node[pos=0.2]{\rotatebox{25}{\footnotesize $\bowtie$}}(d);
\fill(d)circle(0.07);\fill(l)circle(0.07);\fill(r)circle(0.07);
\end{tikzpicture}
\ \text{ and }\ 
\hat{T}^0=
\begin{tikzpicture}[baseline=-1mm]
\coordinate(d)at(0,-1);\coordinate(l)at(-0.4,0);\coordinate(r)at(0.4,0);
\draw(0,0)circle(1);\draw(l)--(d)--(r);
\draw(d)..controls(-1,-0.3)and(-0.7,0.3)..(-0.4,0.3);\draw(d)..controls(-0.05,0)and(-0.1,0.3)..(-0.4,0.3);
\draw(d)..controls(1,-0.3)and(0.7,0.3)..(0.4,0.3);\draw(d)..controls(0.05,0)and(0.1,0.3)..(0.4,0.3);
\fill(d)circle(0.07);\fill(l)circle(0.07);\fill(r)circle(0.07);
\end{tikzpicture}\ .
\]
\begin{minipage}{0.7\textwidth}
Then we can get $b_T(c)=(-1,2)$, $b_T(r_i')=2 b_T(r_i)$, and
{\setlength\arraycolsep{0.5mm}
\begin{eqnarray*}
b_T(r_i)&=&\left\{
\begin{array}{ll}
b_T(r_{-1})-(i+1)b_T(c) & (i\le-2),\\
(0,1) & (i=-1),\\
(0,-1) & (i=0),\\
b_T(r_0)+ib_T(c) & (i\ge1),
\end{array} \right.
\\
b_T(l_i)&=&\left\{
\begin{array}{ll}
b_T(l_{-1})-2(i+1)b_T(c) & (i\le-2),\\
(1,0) & (i=-1),\\
(-1,0) & (i=0),\\
b_T(l_0)+2ib_T(c) & (i\ge1).
\end{array} \right.
\end{eqnarray*}}\hspace{-1.5mm}
Here, we denote by $\ell$ the vector $b_{T}(\ell)$ in the right diagram.
\end{minipage}
\begin{minipage}{0.28\textwidth}
\begin{tikzpicture}[baseline=0mm,scale=0.8]
\draw[dotted](1,-1)grid(-3,4);\node at(0,-1.2){};
\draw(-3,0)--(1,0);\draw(0,-1)--(0,4);\fill(-1,2)circle(0.1);
\fill(1,0)circle(0.1);\fill(-1,0)circle(0.1);\fill(0,-1)circle(0.1);\fill(0,1)circle(0.1);
\fill(-1,3)circle(0.1);\fill(-1,1)circle(0.1);\fill(-2,3)circle(0.1);
\fill(-1,4)circle(0.1);\fill(-3,4)circle(0.1);
\draw(0,0)--(-1,1);\draw(0,0)--(-2,3);\draw(0,0)--(-1,3);
\draw(0,0)--(-1,4);\draw(0,0)--(-3,4);\draw(0,0)--(-1,2);
\node at(-1.35,3.25){$r_{-2}$};\node at(-2.2,3.3){$r_2$};\node at(-1.3,4.3){$l_{-2}$};\node at(-2.8,4.3){$l_1$};\node at(-1.2,2.2){$c$};
\node at(-1.3,1.2){$r_1$};\node at(-1.2,-0.3){$l_0$};\node at(0.3,-0.8){$r_0$};\node at(0.7,-0.3){$l_{-1}$};\node at(0.45,1.2){$r_{-1}$};
\end{tikzpicture}
\end{minipage}\\
All laminations on $\cO^w$ are given by $\{c\}$, $\{k r_i, m r_i', n l_i\}$, and $\{k r_i, m r_i', n l_{i-1}\}$ for $i \in \bZ$, $k \in \{0, 1\}$, and $m, n \in \bZ_{\ge 0}$. Notice that the collections $\{r_i, r_i\}$ are not laminations. It is easy to check that $b_T$ induces a bijection between the set of laminations on $\cO^w$ and $\bZ^2$.
\end{example}

\subsection{Elementary, exceptional, semi-closed, and closed laminates}\label{subsec:kinds of lam}

For a tagged arc $\delta$ of $\cO^w$, we define an \emph{elementary laminate} $\se(\delta)$. First, if $\delta$ is a pending arc with weight $2$ connecting $o\in M$ and $q\in Q$, then we replace it with a loop at $o$, cutting out a monogon with no punctures and exactly one orbifold point $q$, whose both ends are tagged in the same way as $\delta$ at $o$. The loop is not a tagged arc, but we denote it by $\delta$ again. The laminate $\se(\delta)$ is given as follows:
\begin{itemize}
\item $\se(\delta)$ is a laminate running along $\delta$ in its small neighborhood;
\item If an endpoint of $\delta$ is a marked point $o$ on a component $C$ of $\partial S$, then the corresponding endpoint of $\se(\delta)$ is located near $o$ on $C$ in the clockwise direction as in the left diagram of Figure \ref{fig:elementary lam};
\item If an endpoint of $\delta$ is a puncture $p$, then the corresponding end of $\se(\delta)$ is a spiral around $p$ clockwise (resp., counterclockwise) if $\delta$ is tagged plain (resp., notched) at $p$ as in the center (resp., right) diagram of Figure \ref{fig:elementary lam};
\item If an endpoint of $\delta$ is an orbifold point with weight $\frac{1}{2}$, then the corresponding end of $\se(\delta)$ is the same as one of $\delta$ as in the center of Figure \ref{fig:elementary lam}.
\end{itemize}
See the right diagram of Figure \ref{fig:elementary lam} for the case that $\delta$ is a pending arc with weight $2$. It follows from the construction that the map $\se$ from the set of tagged arcs to the set of laminates is injective.

\begin{figure}[htp]
\centering
\begin{tikzpicture}[baseline=0mm]
\coordinate(u)at(0,0.5);\coordinate(l)at(-0.5,0);\coordinate(r)at(0.5,0);\coordinate(d)at(0,-0.5);
\coordinate(u1)at(3,0.5);\coordinate(l1)at(2.5,0);\coordinate(r1)at(3.5,0);\coordinate(d1)at(3,-0.5);
\draw(r)--node[above,pos=0.2]{$\delta$}(l1);
\draw(0.4,-0.3)..controls(1.5,-0.3)and(1.5,0.3)..node[above,pos=0.8]{$\se(\delta)$}(2.6,0.3);
\draw[pattern=north east lines](u)arc(90:-90:5mm);
\draw[pattern=north east lines](u1)arc(90:270:5mm);
\draw[dotted](u)arc(90:270:5mm);
\draw[dotted](u1)arc(90:-90:5mm);
\fill(r)circle(0.07);\fill(l1)circle(0.07);
\end{tikzpicture}
\hspace{10mm}
\begin{tikzpicture}[baseline=0mm]
\coordinate(l)at(0,0);\coordinate(r)at(2.2,0);
\draw(l)--node[above]{$\delta$}(r);
\draw(0,-0.3)..controls(0.5,-0.3)and(1.5,0)..node[below]{$\se(\delta)$}(r);
\draw(0,-0.3)arc(-90:-270:0.26);\draw(0,0.22)arc(90:-90:0.2);
\fill(l)circle(0.07);\node at(r){$\times$};\node at(2.5,0){$r$};
\end{tikzpicture}
\hspace{10mm}
\begin{tikzpicture}[baseline=0mm]
\coordinate(l)at(0,0);\coordinate(r)at(2.2,0);
\draw(l)--node[below,pos=0.3]{$\delta$}node[pos=0.15]{\rotatebox{90}{\footnotesize $\bowtie$}}(r);
\draw(0,0.3)--node[above,pos=0.7]{$\se(\delta)$}(2.5,0.3);\draw(2.5,-0.3)arc(-90:90:0.3);
\draw(0,0.25)..controls(1,0.25)and(1.5,-0.3)..(2.5,-0.3);
\draw(0,0.3)arc(90:270:0.26);\draw(0,-0.22)arc(-90:90:0.2);
\draw(0,0.25)arc(90:270:0.2);\draw(0,-0.15)arc(-90:90:0.14);
\fill(l)circle(0.07);\node at(r){$\times$};\node at(2.5,0){$q$};
\end{tikzpicture}
\caption{Elementary laminates of tagged arcs, where $w(r)=\frac{1}{2}$ and $w(q)=2$}
\label{fig:elementary lam}
\end{figure}
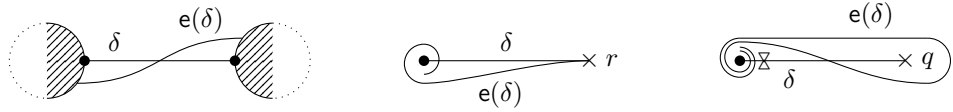

The following properties of elementary laminates are proved in \cite[Proposition 2.5]{Y20} for the case that $\cO^w$ has no orbifold points. Similarly, we can get the same statement for the general case.

\begin{proposition}\label{prop:properties of elementary}
(1) Let $\delta$ and $\delta'$ be tagged arcs such that $\delta^{\circ} \neq \delta'^{\circ}$. Then $\delta$ and $\delta'$ are compatible if and only if $\se(\delta)$ and $\se(\delta')$ are compatible.\par
(2) The map $\se$ induces a bijection between the set of partial tagged triangulations of $\cO^w$ without pairs of conjugate arcs and the set of laminations on $\cO^w$ consisting only of distinct elementary laminates.
\end{proposition}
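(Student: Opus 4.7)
The plan is to prove (1) by a localized, case-by-case analysis at the endpoints of $\delta$ and $\delta'$, and then to deduce (2) from (1) together with the injectivity of $\se$ and a short observation about pairs of conjugate arcs. For (1), away from any common endpoint the laminate $\se(\delta)$ is obtained from $\delta$ by a small isotopy inside a tubular neighborhood, so the transverse crossings of $\se(\delta)$ and $\se(\delta')$ in the interior of $S$ are in bijection with those of $\delta$ and $\delta'$. The content of the equivalence therefore reduces to checking that near each shared endpoint, the tagged-arc compatibility condition matches the laminate compatibility condition (no intersection and no shared orbifold point).

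I would examine the possible types of shared endpoints one by one: (a) a marked point on $\partial S$, where both laminate ends are perturbed clockwise along $\partial S$ and therefore do not cross; (b) a puncture $p$, where tagged-arc compatibility forces equal tags at $p$, which in turn forces the two spirals of $\se(\delta)$ and $\se(\delta')$ at $p$ to curl in the same direction and avoid each other, while unequal tags force opposite spirals and an intersection; (c) an orbifold point of weight $\tfrac12$, where sharing such a point is a failure of compatibility on both sides by definition; (d) for a weight-$2$ pending arc, after the replacement by a loop described just before the statement, one reduces to case (a) at the incident marked point together with the observation that two weight-$2$ pending arcs at a common orbifold point $q$ become loops both enclosing $q$, whose $\se$-images must cross. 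Collecting the cases yields the equivalence in (1).

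For (2), the forward map $T \mapsto \{\se(\delta)\mid \delta\in T\}$ is well-defined by (1) (tagged arcs in $T$ have pairwise distinct underlying ideal arcs, since $T$ has no pair of conjugate arcs) and takes values in the set of laminations consisting of distinct elementary laminates by the injectivity of $\se$. For the inverse, start with a lamination $L = \{\se(\delta_\ell)\}_{\ell\in L}$ of distinct elementary laminates, where each $\delta_\ell$ is determined uniquely by $\ell$. If two arcs $\delta_\ell$ and $\delta_{\ell'}$ had the same underlying ideal arc but differed, they would differ only by a tag change at some puncture $p$; their $\se$-images would then spiral at $p$ in opposite directions and intersect, contradicting that $L$ is a lamination. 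Hence the underlying ideal arcs are pairwise distinct, so by (1) the arcs $\{\delta_\ell\}$ are pairwise compatible, and by construction this collection contains no pair of conjugate arcs.

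The main obstacle is the local analysis in (1) near punctures and orbifold points: one must carefully track the clockwise-shift convention on $\partial S$ and the plain-clockwise/notched-counterclockwise convention at punctures built into $\se$, and verify that the replacement of weight-$2$ pending arcs by auxiliary loops does not alter the compatibility verdict. Apart from these local checks, the argument is a direct extension of the proof of \cite[Proposition 2.5]{Y20}, with the genuinely new content concentrated in the cases involving orbifold points.
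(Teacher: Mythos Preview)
Your proposal is correct and follows essentially the same approach as the paper: both reduce (1) to a local analysis at common endpoints, with your case-by-case breakdown (boundary point, puncture, weight-$\tfrac12$ and weight-$2$ orbifold point) simply making explicit what the paper states more tersely. For (2), both arguments deduce the bijection from (1) together with the observation that distinct tagged arcs with the same underlying ideal arc are compatible only when they form a conjugate pair, in which case their $\se$-images intersect near the puncture where the tags differ.
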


\begin{proof}
(1) By the definition of $\se$, it is enough to consider neighborhoods of their endpoints. In particular, if $\delta$ and $\delta'$ have no common endpoints, the assertion holds. If they are incident to a common orbifold point, then they are not compatible and $\se(\delta)$ and $\se(\delta')$ are not compatible. Suppose that $\delta$ and $\delta'$ have at least one common endpoint which is not an orbifold point. Since $\delta^{\circ} \neq \delta'^{\circ}$, $\{\delta,\delta'\}$ is not a pair of conjugate arcs. Thus $\delta$ and $\delta'$ are compatible if and only if the tags of $\delta$ and $\delta'$ at each common endpoint are the same. By the definition of $\se$, it is equivalent that $\se(\delta)$ and $\se(\delta')$ are compatible.

(2) If two distinct tagged arcs $\delta$ and $\delta'$ satisfying $\delta^{\circ} = \delta'^{\circ}$ are compatible, then $\{\delta,\delta'\}$ is a pair of conjugate arcs, in which case $\se(\delta)$ and $\se(\delta')$ are not compatible. Thus the assertion follows from (1).
\end{proof}

\begin{definition}
A laminate whose both endpoints are orbifold points with weight $\frac{1}{2}$ is called \emph{semi-closed}. A laminate which is neither elementary, semi-closed, nor closed is called \emph{exceptional}.
\end{definition}

Exceptional laminates are characterized as follows (see Figure \ref{fig:exc}).

\begin{proposition}\label{prop:excep}
A laminate is exceptional if and only if it satisfies the following conditions:
\begin{itemize}
\item It encloses exactly one puncture and no orbifold points, or no punctures and exactly one orbifold point with weight $\frac{1}{2}$;
\item Its both ends are incident to a common boundary segment, or spirals around a common puncture in the same direction.
\end{itemize}
\end{proposition}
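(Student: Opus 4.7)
\noindent\emph{Proof plan.} The strategy is to characterize elementary laminates as exactly the image of the construction $\se$, and then determine which non-closed, non-semi-closed laminates fail to lie in this image. For such a laminate $\ell$, I would define a candidate tagged arc $\delta(\ell)$ by the following straightening: each endpoint of $\ell$ on $\partial S$ is replaced by the marked point immediately counterclockwise from it; each spiral around a puncture $p$ is replaced by an endpoint at $p$ tagged plain (if the spiral is clockwise) or notched (if counterclockwise); each weight-$\tfrac12$ orbifold endpoint is preserved. By the very construction of $\se$, whenever $\delta(\ell)$ is a valid tagged arc we have $\ell = \se(\delta(\ell))$, and every $\se(\delta)$ arises in this way. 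Thus $\ell$ is elementary if and only if $\delta(\ell)$ is a valid tagged arc.

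For the forward direction, suppose $\ell$ is exceptional, so $\delta(\ell)$ is not a valid tagged arc. Because $\ell$ itself is a valid laminate (non-self-intersecting, not null-homotopic, not parallel to a trivial boundary piece, etc.), the straightening cannot fail for generic topological reasons; the only obstruction is the monogon rule. Hence $\delta(\ell)$ must be a loop at some $v \in M \cup Q$ whose interior is either (a) exactly one puncture and no orbifold points --- forbidden for tagged arcs because such monogons are encoded instead through the $\iota$-construction as notched arcs --- or (b) no punctures and exactly one orbifold point, necessarily of weight $\tfrac12$ since enclosing only a weight-$2$ orbifold point is ruled out for $\ell$ by the laminate axioms. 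That $\delta(\ell)$ is a loop forces the two ends of $\ell$ to straighten to the same vertex, which occurs precisely when they lie on a common boundary segment or spiral at a common puncture in the same direction. Both conditions of the proposition follow.

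For the reverse direction, assume the two conditions hold. The endpoint types of $\ell$ show that $\ell$ is neither closed nor semi-closed. The second condition makes $\delta(\ell)$ a loop at the common marked point or puncture, and the first condition ensures the loop cuts out a monogon of one of the two forbidden types above. Hence $\delta(\ell)$ is not a valid tagged arc, so $\ell$ is not elementary, and $\ell$ is exceptional.

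The principal obstacle will be rigorously establishing $\ell = \se(\delta(\ell))$ whenever $\delta(\ell)$ is valid: the construction of $\se$ dictates a specific side (a slight clockwise offset at boundaries, specific spiral directions at punctures, and the auxiliary loop convention for weight-$2$ pending arcs) that must be checked case by case against the local configuration of $\ell$. Once this enumeration is carried out --- in the spirit of Proposition \ref{prop:properties of elementary} --- the remaining reasoning reduces to the monogon analysis sketched above.
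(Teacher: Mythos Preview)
Your overall strategy---straightening $\ell$ to a candidate arc $\delta(\ell)$ and checking when this fails to be a tagged arc---is exactly what the paper does (the paper's proof is a one-sentence contrapositive: ``if $\ell$ does not satisfy the conditions, then applying $\se^{-1}$ gives a tagged arc''). So the route is the same.

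However, your argument for why the enclosed orbifold point must have weight $\tfrac12$ is wrong. You write that ``enclosing only a weight-$2$ orbifold point is ruled out for $\ell$ by the laminate axioms,'' but it is not: the laminate axioms make no distinction between weights, and the right-hand picture in Figure~\ref{fig:elementary lam} shows precisely such a laminate. The correct reason is the one you flag only in passing at the end: by the definition of $\se$, a pending tagged arc of weight~$2$ is first replaced by the loop around its orbifold point, and $\se$ is then applied to that loop. Hence a laminate enclosing exactly one weight-$2$ orbifold point with matching ends \emph{is} elementary---it equals $\se$ of that pending arc---and so is excluded from the exceptional class for this reason, not by any laminate axiom.

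This also means your working dichotomy ``$\ell$ is elementary $\Leftrightarrow$ $\delta(\ell)$ is a valid tagged arc'' is false as stated: for the weight-$2$ case $\delta(\ell)$ is the forbidden one-orbifold-point loop (not an ideal arc), yet $\ell$ is elementary. You need to build the weight-$2$ pending convention into your inverse map from the start, rather than treating it as a side case to be checked later; once you do, both directions go through exactly as the paper intends.
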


\begin{proof}
Let $\ell$ be a laminate which is neither semi-closed nor closed. If $\ell$ does not satisfy the conditions, then we can obtain a tagged arc from $\ell$ by applying the same transformation as $\se^{-1}$, that is, $\ell$ is elementary. Thus the assertion follows from the definitions of tagged arcs and the map $\se$.
\end{proof}

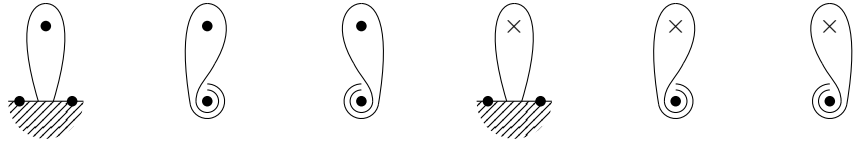
\begin{figure}[htp]
\centering
\begin{tikzpicture}[baseline=0mm]
\coordinate(0)at(0,0);\coordinate(p)at(0,1);
\draw(-0.1,0)..controls(-0.4,1)and(-0.2,1.3)..(0,1.3);
\draw(0.1,0)..controls(0.4,1)and(0.2,1.3)..(0,1.3); 
\draw(-0.5,0)--(0.5,0);
\fill(-0.35,0)circle(0.07);\fill(0.35,0)circle(0.07);\fill(p)circle(0.07);
\fill[pattern=north east lines](-0.5,0)arc(-180:0:0.5);
\end{tikzpicture}
 \hspace{10mm}
\begin{tikzpicture}[baseline=0mm]
\coordinate(0)at(0,0);\coordinate(p)at(0,1);
\draw(-170:0.23)..controls(-0.4,1)and(-0.2,1.3)..(0,1.3);
\draw(0.1,0.5)..controls(0.4,1)and(0.2,1.3)..(0,1.3);
\draw(0.1,0.5)..controls(0,0.3)and(-0.15,0.2)..(-0.15,0);
\draw(-170:0.23)arc(-170:90:0.23);
\draw(-0.15,0)arc(-180:90:0.15);
\fill(0,0)circle(0.07);\fill(p)circle(0.07);
\end{tikzpicture}
 \hspace{10mm}
\begin{tikzpicture}[baseline=0mm]
\coordinate(0)at(0,0);\coordinate(p)at(0,1);
\draw(-10:0.23)..controls(0.4,1)and(0.2,1.3)..(0,1.3);
\draw(-0.1,0.5)..controls(-0.4,1)and(-0.2,1.3)..(0,1.3);
\draw(-0.1,0.5)..controls(0,0.3)and(0.15,0.2)..(0.15,0);
\draw(-10:0.23)arc(-10:-270:0.23);
\draw(0.15,0)arc(0:-270:0.15);
\fill(0,0)circle(0.07);\fill(p)circle(0.07);
\end{tikzpicture}
\hspace{10mm}
\begin{tikzpicture}[baseline=0mm]
\coordinate(0)at(0,0);\coordinate(p)at(0,1);
\draw(-0.1,0)..controls(-0.4,1)and(-0.2,1.3)..(0,1.3);
\draw(0.1,0)..controls(0.4,1)and(0.2,1.3)..(0,1.3); 
\draw(-0.5,0)--(0.5,0);
\fill(-0.35,0)circle(0.07);\fill(0.35,0)circle(0.07);\node at(p){$\times$};
\fill[pattern=north east lines](-0.5,0)arc(-180:0:0.5);
\end{tikzpicture}
 \hspace{10mm}
\begin{tikzpicture}[baseline=0mm]
\coordinate(0)at(0,0);\coordinate(p)at(0,1);
\draw(-170:0.23)..controls(-0.4,1)and(-0.2,1.3)..(0,1.3);
\draw(0.1,0.5)..controls(0.4,1)and(0.2,1.3)..(0,1.3);
\draw(0.1,0.5)..controls(0,0.3)and(-0.15,0.2)..(-0.15,0);
\draw(-170:0.23)arc(-170:90:0.23);
\draw(-0.15,0)arc(-180:90:0.15);
\fill(0,0)circle(0.07);\node at(p){$\times$};
\end{tikzpicture}
 \hspace{10mm}
\begin{tikzpicture}[baseline=0mm]
\coordinate(0)at(0,0);\coordinate(p)at(0,1);
\draw(-10:0.23)..controls(0.4,1)and(0.2,1.3)..(0,1.3);
\draw(-0.1,0.5)..controls(-0.4,1)and(-0.2,1.3)..(0,1.3);
\draw(-0.1,0.5)..controls(0,0.3)and(0.15,0.2)..(0.15,0);
\draw(-10:0.23)arc(-10:-270:0.23);
\draw(0.15,0)arc(0:-270:0.15);
\fill(0,0)circle(0.07);\node at(p){$\times$};
\end{tikzpicture}
\caption{Exceptional laminates, where the orbifold points have weight $\frac{1}{2}$}\label{fig:exc}
\end{figure}

To interpret shear coordinates of exceptional laminates as ones of elementary laminates, we introduce the following notations. For an exceptional laminate $\ell$ of $\cO^w$, elementary laminates $\ell_{\sf p}$ and $\ell_{\sf q}$ are given by
\[
 \ell
\begin{tikzpicture}[baseline=5mm]
\coordinate(0)at(0,0);\coordinate(p)at(0,1);
\draw(-0.1,0)..controls(-0.4,1)and(-0.2,1.3)..(0,1.3);
\draw(0.1,0)..controls(0.4,1)and(0.2,1.3)..(0,1.3);
\fill(p)circle(0.07);\filldraw[dotted,thick,fill=white](0)circle(0.2);
\end{tikzpicture}
 \hspace{4mm}\rightarrow\hspace{4mm}
 \ell_{\sf p}
\begin{tikzpicture}[baseline=5mm]
\coordinate(0)at(0,0);\coordinate(p)at(0,1);
\draw(-0.1,0)..controls(-0.4,1)and(-0.2,1.2)..(0,1.2);
\draw(0,1.2)arc(90:-130:0.2);
\fill(p)circle(0.07);\filldraw[dotted,thick,fill=white](0)circle(0.2);
\end{tikzpicture}
 \hspace{3mm}
 \ell_{\sf q}
\begin{tikzpicture}[baseline=5mm]
\coordinate(0)at(0,0);\coordinate(p)at(0,1);
\draw(0.1,0)..controls(0.4,1)and(0.2,1.2)..(0,1.2);
\draw(0,1.2)arc(90:310:0.2);
\fill(p)circle(0.07);\filldraw[dotted,thick,fill=white](0)circle(0.2);
\end{tikzpicture}
\hspace{7mm}\text{or}\hspace{7mm}
 \ell
\begin{tikzpicture}[baseline=5mm]
\coordinate(0)at(0,0);\coordinate(p)at(0,1);
\draw(-0.1,0)..controls(-0.4,1)and(-0.2,1.3)..(0,1.3);
\draw(0.1,0)..controls(0.4,1)and(0.2,1.3)..(0,1.3);
\node at(p){$\times$};\filldraw[dotted,thick,fill=white](0)circle(0.2);
\end{tikzpicture}
 \hspace{4mm}\rightarrow\hspace{4mm}
 \ell_{\sf p}=\ell_{\sf q}\ 
\begin{tikzpicture}[baseline=5mm]
\coordinate(0)at(0,0);\coordinate(p)at(0,1);
\draw(0)--(p);
\node at(p){$\times$};\filldraw[dotted,thick,fill=white](0)circle(0.2);
\end{tikzpicture}\ ,
\]
\begin{equation}\label{eq:ends}
 \text{where}\ 
\begin{tikzpicture}[baseline=-1mm]
\draw[dotted,thick] (0,0)circle(0.5);
\end{tikzpicture}
 =
\begin{tikzpicture}[baseline=-1mm]
\draw[dotted,thick] (0,0)circle(0.5);
\draw(-0.15,0)--(-0.25,0.4) (0.15,0)--(0.25,0.4) (-0.5,0)--(0.5,0);
\fill(-0.35,0)circle(0.07);  \fill(0.35,0)circle(0.07);
\fill[pattern=north east lines](-0.5,0)arc(-180:0:0.5);
\end{tikzpicture}
\ \text{or}\ 
\begin{tikzpicture}[baseline=-1mm]
\draw[dotted,thick] (0,0)circle(0.5);
\draw(-0.25,0.4)--(-0.25,0);
\draw(0.25,0.4)..controls(0.2,0.4)and(-0.15,0.5)..(-0.15,0);
\draw(-180:0.25)arc(-180:90:0.25);
\draw(-0.15,0)arc(-180:90:0.15);
\fill(0,0)circle(0.07);
\end{tikzpicture}
\ \text{or}\ 
\begin{tikzpicture}[baseline=-1mm]
\draw[dotted,thick] (0,0)circle(0.5);
\draw(0.25,0.4)--(0.25,0);
\draw(-0.25,0.4)..controls(-0.2,0.4)and(0.15,0.5)..(0.15,0);
\draw(0:0.25)arc(0:-270:0.25);
\draw(0.15,0)arc(0:-270:0.15);
\fill(0,0)circle(0.07);
\end{tikzpicture}\ .
\end{equation}
For a lamination $L$ on $\cO^w$, we denote by $L_{\sf pq}$ the collection of elementary laminates obtained from $L$ by replacing exceptional laminates $\ell \in L$ with $\ell_{\sf p}$ and $\ell_{\sf q}$.

\begin{example}\label{ex:digon pq}
We keep the notations of Example \ref{ex:digon}. Then we have $(\ell_2)_{\sf p}=\ell_3$, $(\ell_2)_{\sf q}=\ell_1$, $(\ell_5)_{\sf p}=\ell_4$, and $(\ell_5)_{\sf q}=\ell_6$. In particular, for $j\in\{2,5\}$, we have
\begin{equation}\label{eq:digon pq}
b_T(\ell_j)=b_T((\ell_j)_{\sf p})+b_T((\ell_j)_{\sf q}).
\end{equation}
Moreover, we also have
\begin{equation}\label{eq:p=q}
b_{1,T}((\ell_j)_{\sf p})+b_{2,T}((\ell_j)_{\sf p})=b_{1,T}((\ell_j)_{\sf q})+b_{2,T}((\ell_j)_{\sf q}).
\end{equation}
\end{example}

In general, the same property as \eqref{eq:digon pq} holds for arbitrary exceptional laminates.

\begin{lemma}\label{lem:=p+q}
Let $T$ be a tagged triangulation of $\cO^w$. For an exceptional laminate $\ell$ of $\cO^w$, we have
\[
b_T(\ell)=b_T(\ell_{\sf p})+b_T(\ell_{\sf q}).
\]
\end{lemma}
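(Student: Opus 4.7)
The plan is to reduce the identity to an analogous statement on the associated marked surface $\hat{\cO}^w$, and then to exploit a symmetry of shear coordinates under reversal of spiral direction at a puncture. Formula \eqref{eq:shear coordinates} expresses each coordinate $b_{\gamma,T}(\cdot)$ as a fixed linear combination (depending only on $\gamma$) of shear coordinates $b_{\gamma^{\flat},\hat T}(\widehat{\cdot})$ on $\hat{\cO}^w$, so it is enough to verify the identity after this substitution.

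The exceptional $\ell$ encloses either a puncture of $\cO^w$ or an orbifold point $r$ with $w(r)=\tfrac12$. In either case $\hat\ell$ is an exceptional laminate of the marked surface $\hat{\cO}^w$ enclosing a single puncture; outside a small disk neighbourhood $U$ of that puncture, $\hat\ell$ consists of two parallel strands, and isotoping each strand inside $U$ to a spiral produces two elementary laminates $\hat\ell^{CW}$ and $\hat\ell^{CCW}$ with oppositely oriented spirals. A signed intersection count yields
\[
b_{\gamma^{\flat},\hat T}(\hat\ell) = b_{\gamma^{\flat},\hat T}(\hat\ell^{CW}) + b_{\gamma^{\flat},\hat T}(\hat\ell^{CCW})
\]
for every $\gamma^{\flat}\in\hat T$; this is the marked-surface analogue of the lemma, handled in \cite{Y20}. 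When $\ell$ encloses a puncture, $\{\hat\ell^{CW},\hat\ell^{CCW}\}=\{\hat\ell_{\sf p},\hat\ell_{\sf q}\}$, and the identity follows immediately.

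When $\ell$ encloses an orbifold point $r$, Definition \ref{def:widehat}(3) forces $\hat\ell_{\sf p}=\hat\ell_{\sf q}=\hat\ell^{CW}$, so we must verify $b_{\gamma^{\flat},\hat T}(\hat\ell^{CW}) = b_{\gamma^{\flat},\hat T}(\hat\ell^{CCW})$ in each slot that enters \eqref{eq:shear coordinates}. For $\gamma\in T$ not pending at $r$, the arc $\hat\gamma$ avoids a neighbourhood of $\hat r$, so the spiral direction at $\hat r$ is invisible and the equality is immediate. For $\gamma\in T$ pending at $r$ with $\hat\gamma=\{\gamma',\gamma''\}$, the identity to prove is
\[
b_{\gamma',\hat T}(\hat\ell^{CW}) + b_{\gamma'',\hat T}(\hat\ell^{CW}) = b_{\gamma',\hat T}(\hat\ell^{CCW}) + b_{\gamma'',\hat T}(\hat\ell^{CCW}),
\]
which follows from the tag-swapping identity $b_{\gamma,T}(\ell)=b_{\gamma^{(\hat r)},T^{(\hat r)}}(\ell^{(\hat r)})$: applying it at $\hat r$ exchanges $\gamma'\leftrightarrow\gamma''$ (because $\{\gamma',\gamma''\}$ is a pair of conjugate arcs) and $\hat\ell^{CW}\leftrightarrow\hat\ell^{CCW}$, so each term on one side is matched to a term on the other.

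The main technical obstacle is the orbifold case, where $\hat\ell_{\sf p}=\hat\ell_{\sf q}$ is a single clockwise spiral while the natural strand-split of $\hat\ell$ gives one clockwise and one counter-clockwise spiral. The symmetry under spiral reversal at $\hat r$, enforced by the pair of conjugate arcs at $\hat r$ in $\hat T$, is what bridges this gap.
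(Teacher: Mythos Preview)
Your argument is correct and follows essentially the same route as the paper: reduce to the associated marked surface $\hat{\cO}^w$ via \eqref{eq:shear coordinates}, invoke \cite[Lemma~2.8]{Y20} there, and then in the orbifold-point case establish the symmetry
\[
b_{\gamma',\hat T}(\hat\ell^{CW}) + b_{\gamma'',\hat T}(\hat\ell^{CW}) = b_{\gamma',\hat T}(\hat\ell^{CCW}) + b_{\gamma'',\hat T}(\hat\ell^{CCW}).
\]
The one substantive difference is in how this last symmetry is justified. The paper obtains it by reducing to the explicit digon computation recorded as \eqref{eq:p=q} in Example~\ref{ex:digon pq}, observing that a neighbourhood of $\hat r$ in $\hat T^{0}$ is exactly such a digon. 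You instead derive it from the tag-swapping invariance $b_{\gamma,\hat T}(\ell)=b_{\gamma^{(\hat r)},\hat T^{(\hat r)}}(\ell^{(\hat r)})$ built into the definition of shear coordinates, together with the observation that $\hat T^{(\hat r)}=\hat T$ (since the only arcs of $\hat T$ meeting $\hat r$ are the conjugate pair $\{\gamma',\gamma''\}$, which is permuted by the tag swap). Your argument is a clean conceptual substitute for the explicit calculation; the paper's version has the advantage of being entirely self-contained once Example~\ref{ex:digon} is on record.
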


\begin{proof}
We only need to prove
\begin{equation}\label{eq:=p+q}
 b_{\delta,T}(\ell)=b_{\delta,T}(\ell_{\sf p})+b_{\delta,T}(\ell_{\sf q})
\end{equation}
for any $\delta\in T$. For the case that $\cO^w$ has no orbifold points, it is given in \cite[Lemma 2.8]{Y20}. In general, since $\hat{\cO}^w$ has no orbifold points, \cite[Lemma 2.8]{Y20} gives an equality
\begin{equation}\label{eq:hat}
b_{\delta',\hat{T}}(\hat{\ell})=b_{\delta',\hat{T}}((\hat{\ell})_{\sf p})+b_{\delta',\hat{T}}((\hat{\ell})_{\sf q})
\end{equation}
for any $\delta'\in\hat{T}$. By Proposition \ref{prop:excep}, there is a unique puncture or orbifold point with weight $\frac{1}{2}$ enclosed by $\ell$, where we denote it by $p$. If $p$ is a puncture, then we have $(\hat{\ell})_{\sf p}=\widehat{(\ell_{\sf p})}$ and $(\hat{\ell})_{\sf q}=\widehat{(\ell_{\sf q})}$. Thus \eqref{eq:=p+q} follows from \eqref{eq:shear coordinates} and \eqref{eq:hat}. If $p$ is an orbifold point, then there is a pending arc $\varepsilon\in T$ incident to $p$ such that $\hat{\varepsilon}$ is a pair of conjugate arcs $\{\varepsilon',\varepsilon''\}$ of $\hat{T}$. Thus \eqref{eq:p=q} means that
\begin{equation}\label{eq:p+q 2}
b_{\varepsilon',\hat{T}}((\hat{\ell})_{\sf p})+b_{\varepsilon'',\hat{T}}((\hat{\ell})_{\sf p})=b_{\varepsilon',\hat{T}}((\hat{\ell})_{\sf q})+b_{\varepsilon'',\hat{T}}((\hat{\ell})_{\sf q})
\end{equation}
since there is a digon of $\hat{T}^0$ with exactly one puncture $\hat{p}$. 

Therefore, we can get \eqref{eq:=p+q} for $\delta=\varepsilon$ as follows:
\[\arraycolsep=0.5mm\def\arraystretch{1.7}\begin{array}{llll}
b_{\varepsilon,T}(\ell)&=&b_{\varepsilon',\hat{T}}(\hat{\ell})+b_{\varepsilon'',\hat{T}}(\hat{\ell})&(\text{by }\eqref{eq:shear coordinates})\\
&=&b_{\varepsilon',\hat{T}}((\hat{\ell})_{\sf p})+b_{\varepsilon',\hat{T}}((\hat{\ell})_{\sf q})+b_{\varepsilon'',\hat{T}}((\hat{\ell})_{\sf p})+b_{\varepsilon'',\hat{T}}((\hat{\ell})_{\sf q})&(\text{by }\eqref{eq:hat})\\
&=&2\left(b_{\varepsilon',\hat{T}}((\hat{\ell})_{\sf p})+b_{\varepsilon'',\hat{T}}((\hat{\ell})_{\sf p})\right)&(\text{by }\eqref{eq:p+q 2})\\
&=&2\left(b_{\varepsilon',\hat{T}}(\widehat{(\ell_{\sf p})})+b_{\varepsilon'',\hat{T}}(\widehat{(\ell_{\sf p})})\right)&(\text{since }(\hat{\ell})_{\sf p}=\widehat{(\ell_{\sf p})})\\
&=&2 b_{\varepsilon,T}(\ell_{\sf p}).&(\text{by }\eqref{eq:shear coordinates})
\end{array}\]
The desired equality \eqref{eq:=p+q} for $\delta\in T\setminus\{\varepsilon\}$ immediately follows from the definition of $(-)_{\sf p}$ and $(-)_{\sf q}$.
\end{proof}

For collections $L$ and $L'$ of laminates of $\cO^w$, we define sets
\[
 \se^{-1}(L) := \{\delta \text{ : a tagged arc}\mid \se(\delta) \in L\}\ \text{ and }\ L \setminus L' := \{\ell \in L \mid \ell \notin L'\}.
\]
The following properties are proved in \cite[Proposition 2.9]{Y20} for the case that $\cO^w$ has no orbifold points. Similarly, we can get the same statement for the general case.

\begin{proposition}\label{prop:properties of pq}
Let $L$ be a lamination on $\cO^w$ consisting only of elementary and exceptional laminates. Then the following properties hold:
\begin{itemize}
\item[(1)] $C(L) \subseteq C(L_{\sf pq})$.
\item[(2)] $\se^{-1}(L_{\sf pq})$ is a partial tagged triangulation of $\cO^w$.
\end{itemize}
Moreover, we take a partial tagged triangulation $U$ of $\cO^w$ such that $T' = \se^{-1}(L_{\sf pq}) \sqcup U$ is a tagged triangulation. Then we have the equality
\begin{itemize}
 \item[(3)] $C(\se T') = C(L_{\sf pq} \sqcup \se U)$.
\end{itemize}
\end{proposition}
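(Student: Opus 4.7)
The plan is to handle the three parts in the order stated, with the bulk of the work going into (2).

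For (1), the key input is Lemma~\ref{lem:=p+q}. Any vector in $C(L)$ is a non-negative combination $\sum_{\ell\in L} a_\ell b_T(\ell)$. I would split the sum according as $\ell \in L$ is elementary or exceptional. Elementary $\ell$ already belong to $L_{\sf pq}$, so their contributions remain in $C(L_{\sf pq})$. For exceptional $\ell$, Lemma~\ref{lem:=p+q} gives $b_T(\ell) = b_T(\ell_{\sf p}) + b_T(\ell_{\sf q})$ with $\ell_{\sf p}, \ell_{\sf q} \in L_{\sf pq}$; replacing each exceptional term by this sum keeps all coefficients non-negative, exhibiting the original vector as an element of $C(L_{\sf pq})$.

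For (2), the strategy is to verify that $L_{\sf pq}$ is a lamination consisting entirely of elementary laminates and then appeal to Proposition~\ref{prop:properties of elementary}(2). That each $\ell_{\sf p}$ and $\ell_{\sf q}$ is elementary is transparent from the pictures in~\eqref{eq:ends}: each is of the form $\se(\delta)$ for a plain or notched pending arc to the enclosed puncture, or for the pending arc to the enclosed orbifold point of weight $\tfrac12$. The pairwise compatibility check is local. Fix an exceptional $\ell \in L$ and a small disk $D$ around the puncture or orbifold point it encloses. Outside $D$ the laminates $\ell_{\sf p}$ and $\ell_{\sf q}$ coincide with $\ell$ up to isotopy; inside $D$ they either terminate at the orbifold point or spiral around the puncture in the two possible directions. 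Any other $\ell' \in L$ is compatible with $\ell$, hence disjoint from $\ell$ and sharing no orbifold point with it, which forces $\ell'$ to avoid $D$ except possibly as a spiral around the same puncture; in that case the direction of the spiral matches at most one of $\ell_{\sf p}, \ell_{\sf q}$, and the classification of exceptional laminates in Proposition~\ref{prop:excep} ensures $\ell'$ is also compatible with the other. An analogous local inspection handles pairs of exceptional laminates $\ell, \ell'$ and the pair $\ell_{\sf p}, \ell_{\sf q}$ for a single exceptional $\ell$. Having verified pairwise compatibility and elementarity, Proposition~\ref{prop:properties of elementary}(2) produces the partial tagged triangulation $\se^{-1}(L_{\sf pq})$.

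For (3), since every element of $L_{\sf pq}$ is elementary and $\se$ is injective, one has $\se(\se^{-1}(L_{\sf pq})) = L_{\sf pq}$. Combined with the disjointness of $\se^{-1}(L_{\sf pq})$ and $U$ in $T'$, this gives the set equality $\se(T') = L_{\sf pq} \sqcup \se(U)$, whence the two cones coincide.

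The main obstacle is the compatibility analysis in (2): one must exhaust the cases from Proposition~\ref{prop:excep} (a puncture or a weight-$\tfrac12$ orbifold point enclosed, each with boundary endpoints or with spiral endpoints at a common puncture) and verify that replacing $\ell$ by $\ell_{\sf p}, \ell_{\sf q}$ does not introduce a new intersection with any other laminate of $L$ or a new shared orbifold point. This is parallel to the proof of \cite[Proposition 2.9]{Y20} for marked surfaces; the orbifold case adds only the pending-arc configurations, which are handled by the same principle of confining the modification to a neighborhood of the enclosed point.
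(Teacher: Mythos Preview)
Your treatment of (1) and (3) matches the paper's; the only refinement for (3) is that $L_{\sf pq}$ is a \emph{collection} with possible repetitions, so $\se(\se^{-1}(L_{\sf pq}))$ agrees with $L_{\sf pq}$ only up to multiplicity, which is harmless for the cone.

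There is a genuine gap in (2). Your plan is to show that $L_{\sf pq}$ is a lamination of elementary laminates and then invoke Proposition~\ref{prop:properties of elementary}(2). But $L_{\sf pq}$ is \emph{not} a lamination in general: when an exceptional $\ell$ encloses a puncture $p$, the laminates $\ell_{\sf p}$ and $\ell_{\sf q}$ spiral around $p$ in opposite directions and therefore intersect (cf.\ Example~\ref{ex:digon pq}, where $(\ell_2)_{\sf p}=\ell_3$ and $(\ell_2)_{\sf q}=\ell_1$, and $\{\ell_1,\ell_3\}$ is not among the laminations listed in Example~\ref{ex:digon}). Your claim that ``an analogous local inspection handles \ldots the pair $\ell_{\sf p},\ell_{\sf q}$'' would fail at exactly this point. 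Note also that Proposition~\ref{prop:properties of elementary}(2) is stated only for laminations of \emph{distinct} elementary laminates and produces partial tagged triangulations \emph{without} pairs of conjugate arcs, whereas conjugate pairs are precisely what arise here.

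The paper's fix is to bypass the lamination property of $L_{\sf pq}$ and work directly with tagged arcs. While $\ell_{\sf p}$ and $\ell_{\sf q}$ are incompatible as laminates, their preimages $\se^{-1}(\ell_{\sf p})$ and $\se^{-1}(\ell_{\sf q})$ form a pair of conjugate arcs (or coincide, in the orbifold-point case) and hence \emph{are} compatible as tagged arcs. One decomposes $L$ into its elementary part $L_{\rm el}$ and exceptional part $L_{\rm ex}$, uses Proposition~\ref{prop:properties of elementary}(2) on $L_{\rm el}$ alone, and then checks compatibility between $\se^{-1}((L_{\rm ex})_{\sf pq})$ and $\se^{-1}(L_{\rm el}\setminus(L_{\rm ex})_{\sf pq})$ via Proposition~\ref{prop:properties of elementary}(1), which applies because the underlying arcs there are distinct.
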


\begin{proof}
 (1) The assertion immediately follows from Lemma \ref{lem:=p+q}.

 (2) Let $L_{\rm el}$ (resp., $L_{\rm ex}$) be the collection of elementary (resp., exceptional) laminates of $L$. By Proposition \ref{prop:properties of elementary}(2), $\se^{-1}(L_{\rm el})$ is a partial tagged triangulation of $\cO^w$. Since $L$ is a lamination, any laminate of $(L_{\rm ex})_{\sf pq}$ is compatible with all laminates of $L_{\rm el} \setminus (L_{\rm ex})_{\sf pq}$. Then, by Proposition \ref{prop:properties of elementary}(1), any tagged arc of $\se^{-1}((L_{\rm ex})_{\sf pq})$ is compatible with all tagged arcs of $\se^{-1}(L_{\rm el} \setminus (L_{\rm ex})_{\sf pq})$. Moreover, $\se^{-1}((L_{\rm ex})_{\sf pq})$ is a partial tagged triangulation since for $\ell \in L_{\rm ex}$, either $\{\se^{-1}(\ell_{\sf p}),\se^{-1}(\ell_{\sf q})\}$ is a pair of conjugate arcs or $\se^{-1}(\ell_{\sf p})=\se^{-1}(\ell_{\sf q})$. Therefore,
\[
 \se^{-1}(L_{\sf pq}) = \se^{-1}(L_{\rm el} \setminus (L_{\rm ex})_{\sf pq}) \sqcup \se^{-1}((L_{\rm ex})_{\sf pq})
\]
 is a partial tagged triangulation of $\cO^w$.

 (3) Since $L_{\sf pq}$ coincides with $\se\se^{-1}(L_{\sf pq})$ up to multiplicity, we have the equalities
\[
C(L_{\sf pq} \sqcup \se U) = C(\se\se^{-1}(L_{\sf pq}) \sqcup \se U) = C(\se T').
\qedhere
\]
\end{proof}

We also interpret the shear coordinates of semi-closed laminates as ones of closed laminates. More generally, we give the following results.

\begin{proposition}\label{prop:enclosing}
Let $T$ be a tagged triangulation and $\ell$ a laminate of $\cO^w$ with at least one endpoint being an orbifold point with weight $\frac{1}{2}$. For a laminate $\ell'$ enclosing $\ell$ as in Figure \ref{fig:enclosing}, we have
\[
b_T(\ell')=2b_T(\ell).
\]
\end{proposition}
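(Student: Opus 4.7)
The plan splits into two cases according to whether $\ell$ has one or two orbifold endpoints.

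If $\ell$ has exactly one such endpoint, then $\ell'$, enclosing it, must have both of its own ends on a common boundary segment or spiraling around a common puncture in the same direction, while enclosing that one orbifold point. By Proposition \ref{prop:excep}, $\ell'$ is then exceptional. An inspection of the pictures defining $(-)_{\sf p}, (-)_{\sf q}$ around \eqref{eq:ends} shows that in the orbifold case one has $(\ell')_{\sf p} = (\ell')_{\sf q} = \ell$, since splitting the loop of $\ell'$ around the orbifold into two pending ends reproduces the arc $\ell$ twice. Lemma \ref{lem:=p+q} then yields $b_T(\ell') = b_T((\ell')_{\sf p}) + b_T((\ell')_{\sf q}) = 2 b_T(\ell)$.

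If $\ell$ is semi-closed with orbifold endpoints $p, p'$, then $\ell'$ is a closed curve enclosing both and Lemma \ref{lem:=p+q} does not apply directly. The approach is to pass to the associated marked surface $\hat{\cO}^w$ and use formula \eqref{eq:shear coordinates}: the orbifold points become punctures $\hat{p}, \hat{p}'$, $\hat{\ell}$ spirals clockwise at each, and $\hat{\ell'} = \ell'$ encloses both without touching them. Outside sufficiently small disk neighborhoods $N_{\hat{p}}, N_{\hat{p}'}$, the curve $\hat{\ell'}$ is isotopic to two parallel copies of $\hat{\ell}$; since $\hat{\cO}^w$ has no orbifold points, $\{\hat{\ell}, \hat{\ell}\}$ is a bona fide lamination on $\hat{\cO}^w$. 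For every $\delta \in T$ that is not a pending arc at $p$ or $p'$, $\hat{\delta}$ and a defining quadrilateral for its shear coordinate can be chosen disjoint from $N_{\hat{p}} \cup N_{\hat{p}'}$, so all $\pm 1$ contributions come from the outer region, giving $b_{\hat{\delta}, \hat{T}}(\hat{\ell'}) = 2 b_{\hat{\delta}, \hat{T}}(\hat{\ell})$, and hence $b_{\delta, T}(\ell') = 2 b_{\delta, T}(\ell)$ via \eqref{eq:shear coordinates}. For the pending arc $\varepsilon \in T$ at $p$ (and similarly at $p'$), write $\hat{\varepsilon} = \{\varepsilon', \varepsilon''\}$; the weight-$\frac{1}{2}$ case of \eqref{eq:shear coordinates} reduces the desired identity to
\[
b_{\varepsilon', \hat{T}}(\hat{\ell'}) + b_{\varepsilon'', \hat{T}}(\hat{\ell'}) = 2 \bigl( b_{\varepsilon', \hat{T}}(\hat{\ell}) + b_{\varepsilon'', \hat{T}}(\hat{\ell}) \bigr),
\]
which I would verify by changing tags at $\hat{p}$ if necessary to obtain an ideal triangulation $\hat{T}^0$ with a self-folded triangle $\{\gamma, \gamma'\}$ at $\hat{p}$, applying the self-folded recipe $b_{\gamma, \hat{T}^0}(-) = b_{\gamma', \hat{T}^0}((-)^{(\hat{p})})$, and directly inspecting how the clockwise spiral of $\hat{\ell}$ and the small loop of $\hat{\ell'}$ at $\hat{p}$ intersect the quadrilateral surrounding $\gamma'$.

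The hard part is this last local verification in the semi-closed case: through the spiral-reversing rule for arcs inside self-folded triangles, one must carefully check that the contributions from the clockwise spiral of $\hat{\ell}$ combine with those from the enclosing loop of $\hat{\ell'}$ to yield exactly the factor of two on the sum over $\{\varepsilon', \varepsilon''\}$. Once this local identity is settled at each of $\hat{p}$ and $\hat{p}'$, the global equality $b_T(\ell') = 2 b_T(\ell)$ assembles coordinate by coordinate.
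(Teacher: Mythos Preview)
Your proposal is correct and follows essentially the same approach as the paper. The paper's proof is only two sentences: the non--semi-closed case is disposed of exactly as you do (via $(\ell')_{\sf p}=(\ell')_{\sf q}=\ell$ and Lemma~\ref{lem:=p+q}), and for the semi-closed case the paper simply says ``in the same way as the proof of Lemma~\ref{lem:=p+q}'', which amounts precisely to your outline---pass to $\hat{\cO}^w$, handle arcs $\delta\neq\varepsilon,\varepsilon'$ by the observation that $\hat{\ell'}$ is two parallel copies of $\hat{\ell}$ away from $\hat p,\hat p'$, and for the pending arcs at $p,p'$ use the local digon identity \eqref{eq:p=q} exactly as in the chain of equalities at the end of the proof of Lemma~\ref{lem:=p+q}; the ``hard part'' you flag is just this last step, which the paper leaves implicit.
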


\begin{proof}
If $\ell$ is not semi-closed, then $\ell'$ is an exceptional laminate with $(\ell')_{\sf p}=(\ell')_{\sf q}=\ell$, thus the assertion follows from Lemma \ref{lem:=p+q}. If $\ell$ is semi-closed, then we can get the desired equality in the same way as the proof of Lemma \ref{lem:=p+q}.
\end{proof}

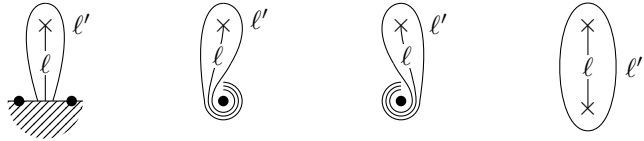
\begin{figure}[htp]
\centering
\begin{tikzpicture}[baseline=0mm]
\coordinate(0)at(0,0);\coordinate(p)at(0,1);
\draw(-0.1,0)..controls(-0.4,1)and(-0.2,1.3)..(0,1.3);
\draw(0.1,0)..controls(0.4,1)and(0.2,1.3)..node[right]{$\ell'$}(0,1.3);
\draw(0)--node[fill=white,inner sep=1]{$\ell$}(p);
\draw(-0.5,0)--(0.5,0);
\fill(-0.35,0)circle(0.07);\fill(0.35,0)circle(0.07);\node at(p){$\times$};
\fill[pattern=north east lines](-0.5,0)arc(-180:0:0.5);
\end{tikzpicture}
 \hspace{10mm}
\begin{tikzpicture}[baseline=0mm]
\coordinate(0)at(0,0);\coordinate(p)at(0,1);
\draw(-0.25,0)..controls(-0.4,1)and(-0.2,1.3)..(0,1.3);
\draw(0.1,0.5)..controls(0.4,1)and(0.2,1.3)..node[right]{$\ell'$}(0,1.3);
\draw(-0.15,0)..controls(-0.15,0.2)and(0,0.3)..(0.1,0.5);
\draw(-0.25,0)arc(-180:90:0.25);
\draw(-0.15,0)arc(-180:90:0.15);
\draw(-0.2,0)arc(-180:90:0.2);\draw(-0.2,0)..controls(-0.2,0.3)and(0,0.7)..node[fill=white,inner sep=1,pos=0.6]{$\ell$}(p);
\fill(0,0)circle(0.07);\node at(p){$\times$};
\end{tikzpicture}
 \hspace{10mm}
\begin{tikzpicture}[baseline=0mm]
\coordinate(0)at(0,0);\coordinate(p)at(0,1);
\draw(0.25,0)..controls(0.4,1)and(0.2,1.3)..node[right]{$\ell'$}(0,1.3);
\draw(-0.1,0.5)..controls(-0.4,1)and(-0.2,1.3)..(0,1.3);
\draw(0.15,0)..controls(0.15,0.2)and(0,0.3)..(-0.1,0.5);
\draw(0:0.25)arc(0:-270:0.25);
\draw(0.15,0)arc(0:-270:0.15);
\draw(0.2,0)arc(0:-270:0.2);\draw(0.2,0)..controls(0.2,0.3)and(0,0.7)..node[fill=white,inner sep=1,pos=0.6]{$\ell$}(p);
\fill(0,0)circle(0.07);\node at(p){$\times$};
\end{tikzpicture}
\hspace{10mm}
\begin{tikzpicture}[baseline=3mm]
\coordinate(0)at(0,0.2);\coordinate(p)at(0,1.3);
\draw(0,-0.1)..controls(-0.5,-0.1)and(-0.5,1.6)..(0,1.6);
\draw(0,-0.1)..controls(0.5,-0.1)and(0.5,1.6)..node[right]{$\ell'$}(0,1.6);
\draw(0)--node[fill=white,inner sep=1]{$\ell$}(p);
\node at(p){$\times$};\node at(0){$\times$};
\end{tikzpicture}
\caption{A laminate $\ell'$ enclosing each laminate $\ell$ with at least one endpoint being an orbifold point with weight $\frac{1}{2}$}\label{fig:enclosing}
\end{figure}

\begin{corollary}\label{cor:double}
Let $T$ be a tagged triangulation and $L$ a lamination on $\cO^w$. For the set $L'$ of laminates of $L$ with at least one endpoint being an orbifold point with weight $\frac{1}{2}$, we set
\[
L''=\{\ell' \mid \text{$\ell'$ is a laminate enclosing some $\ell\in L'$}\}.
\]
Then, for two distinct sets $L_1=L\setminus L'$ and $L_2=L\setminus L'$, the collection $\overline{L}:=L_1\sqcup L_2\sqcup L''$ is a lamination on $\cO^w$ without semi-closed laminates such that
\[
b_T(\overline{L})=2b_T(L).
\]
\end{corollary}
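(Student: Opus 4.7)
The plan is to reduce the equality $b_T(\overline{L})=2b_T(L)$ to Proposition \ref{prop:enclosing} by additivity of shear coordinates, and then separately verify that $\overline{L}$ is a lamination with no semi-closed members.

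First I would compute the shear coordinate. Since the shear coordinate of a lamination is defined as the sum of the shear coordinates of its members, and since $L=(L\setminus L')\sqcup L'$, we have
\[
b_T(\overline{L})=b_T(L_1)+b_T(L_2)+b_T(L'')=2\,b_T(L\setminus L')+\sum_{\ell\in L'}b_T(\ell'),
\]
where $\ell'\in L''$ denotes the chosen laminate enclosing $\ell\in L'$. Proposition \ref{prop:enclosing} gives $b_T(\ell')=2\,b_T(\ell)$ for each $\ell\in L'$, whence $\sum_{\ell\in L'}b_T(\ell')=2\,b_T(L')$ and therefore $b_T(\overline{L})=2\,b_T(L)$.

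Next I would verify that $\overline{L}$ is a genuine lamination. The only nontrivial point is pairwise compatibility, because we have doubled the members of $L\setminus L'$ and added the members of $L''$. A laminate is incompatible with itself precisely when it carries an orbifold endpoint; by construction no member of $L_1=L_2=L\setminus L'$ nor of $L''$ (whose laminates enclose, rather than touch, the orbifold points, as in Figure \ref{fig:enclosing}) has an orbifold endpoint, so self-compatibility is automatic. For two distinct laminates in $\overline{L}$, each enclosing laminate $\ell'$ may be isotoped into an arbitrarily small tubular neighborhood of $\ell$ together with the orbifold points it encloses; hence it inherits compatibility with every other member of $L$ from the laminarity of $L$. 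Since distinct members of $L'$ are themselves compatible and in particular incident to no common orbifold point, their enclosing laminates admit disjoint such neighborhoods and are mutually compatible. This shows $\overline{L}$ is a lamination, and since no member of $\overline{L}$ has an orbifold endpoint, none of them is semi-closed.

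The main (mild) obstacle is the compatibility check in the second step, which requires observing that enclosing laminates can be isotoped into small neighborhoods of the original laminates; all the shear-coordinate bookkeeping is immediate from Proposition \ref{prop:enclosing} and additivity.
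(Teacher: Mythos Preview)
Your proof is correct and follows essentially the same approach as the paper: the equality is reduced to Proposition~\ref{prop:enclosing} via additivity of shear coordinates, and the lamination/no-semi-closed claims are checked directly from the construction. The paper's own proof is a two-sentence sketch (``follows from the construction'' and ``follows from Proposition~\ref{prop:enclosing}''), so your expanded compatibility discussion simply fills in details the paper leaves implicit.
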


\begin{proof}
It follows from the construction of $\overline{L}$ that it is a lamination on $\cO^w$ without semi-closed laminates. Then desired equality follows from Proposition \ref{prop:enclosing}.
\end{proof}

Note that the collection $L\sqcup L$ satisfies $b_T(L\sqcup L)=2b_T(L)$. However, it is not a lamination in general because a laminate with at least one endpoint being an orbifold point is not compatible with itself.

\subsection{Dehn twists}\label{subsec:Dehn}

Let $V$ be a tubular neighborhood of a closed laminate $\ell_c$, which is homeomorphic to an annulus $S^1 \times [0,1]$. The \emph{Dehn twist $\sT_{\ell_c}$ along $\ell_c$} is a self-homeomorphism of $S$ which is given by sending $(s,t)$ to $(\exp(2\pi i t)s,t)$ on $V \simeq S^1 \times [0,1]$ and by fixing all points on $S \setminus V$ (see e.g. \cite{Ma16}). In particular, $\sT_{\ell_c}$ is oriented as follows:
\[
\begin{tikzpicture}[baseline=0mm]
\coordinate(0)at(0,0);
\draw(0,1)--(3,1) (0,-1)--(3,-1);\draw[blue](0.4,0)--(3.4,0);
\draw[red](1.5,1)arc[start angle = 90, end angle = -90, x radius=4mm, y radius=10mm];
\draw[red,dotted](1.5,1)arc[start angle = 90, end angle = 270, x radius=4mm, y radius=10mm];
\draw(0)circle[x radius=4mm, y radius=10mm];
\draw(3,1)arc[start angle = 90, end angle = -90, x radius=4mm, y radius=10mm];
\draw[dotted](3,1)arc[start angle = 90, end angle = 270, x radius=4mm, y radius=10mm];
\node[red]at(2.1,0.5){$\ell_c$};
\end{tikzpicture}
 \hspace{7mm} \xrightarrow{\sT_{\ell_c}} \hspace{7mm}
\begin{tikzpicture}[baseline=0mm]
\coordinate (0) at (0,0);
\draw (0,1)--(3,1) (0,-1)--(3,-1);
\draw[blue] (0.4,0) .. controls (1.2,0) and (1.2,1) .. (1.4,1);
\draw[blue] (1.6,-1) .. controls (1.8,-1) and (1.8,0) .. (3.4,0);
\draw[blue,dotted] (1.4,1) .. controls (1.6,1) and (1.2,-0.8) .. (1.6,-1);
\draw (0) circle [x radius=4mm, y radius=10mm];
\draw (3,1) arc [start angle = 90, end angle = -90, x radius=4mm, y radius=10mm];
\draw[dotted] (3,1) arc [start angle = 90, end angle = 270, x radius=4mm, y radius=10mm];
\end{tikzpicture}
\]\vspace{1mm}\hspace{-1.5mm}
Note that $\sT_{\ell_c}$ is compatible with the map $\widehat{(-)}$ since $\widehat{(-)}$ only affects neighborhoods of marked points, that is,
\begin{equation}\label{eq:compatibility}
\sT_{\ell_c}\widehat{(-)}=\widehat{\sT_{\ell_c}(-)}.
\end{equation}
 
\begin{theorem}\label{thm:Dehn}
Let $T$ be a tagged triangulation, $\delta \in T$, $\ell_c$ a closed laminate, and $\ell$ a laminate of $\cO^w$. Then there is $m' \in \bZ_{\ge 0}$ such that for any $m \ge m'$, we have
\[
b_{\delta,T}(\sT_{\ell_c}^m(\ell))=b_{\delta,T}(\sT_{\ell_c}^{m'}(\ell))+(m-m') \#(\ell \cap \ell_c) b_{\delta,T}(\ell_c).
\]
\end{theorem}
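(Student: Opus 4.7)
The plan is to reduce the statement to the marked-surface case, which is established in \cite{Y20}, using the map $\widehat{(-)}$ of Definition~\ref{def:widehat} and the defining formula \eqref{eq:shear coordinates} for shear coordinates on a weighted orbifold. The key observation is that a closed laminate $\ell_c$ lies in the interior of $S$ away from $M\sqcup Q$, so it is unaffected by $\widehat{(-)}$: as a curve in $\hat{\cO}^w$, we may identify $\hat{\ell_c}$ with $\ell_c$ and apply the Dehn twist $\sT_{\ell_c}$ on the associated marked surface $\hat{\cO}^w$. Combined with the compatibility \eqref{eq:compatibility}, this yields $\widehat{\sT_{\ell_c}^m(\ell)}=\sT_{\ell_c}^m(\hat{\ell})$ for every $m\ge 0$.

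First I would invoke the marked-surface version of the theorem (the analogue for $\hat{\cO}^w$, $\hat{T}$, $\hat{\ell}$, and $\ell_c$, which is available from the author's previous work). This produces, for each tagged arc $\delta'\in\hat{T}$, an integer $m'_{\delta'}\in\bZ_{\ge0}$ such that for every $m\ge m'_{\delta'}$,
\[
b_{\delta',\hat{T}}\bigl(\sT_{\ell_c}^m(\hat{\ell})\bigr)=b_{\delta',\hat{T}}\bigl(\sT_{\ell_c}^{m'_{\delta'}}(\hat{\ell})\bigr)+(m-m'_{\delta'})\,\#(\hat{\ell}\cap\ell_c)\,b_{\delta',\hat{T}}(\ell_c).
\]
Because $\ell_c$ meets $\hat{\ell}$ only at interior transverse points (the spirals introduced by $\widehat{(-)}$ sit in arbitrarily small neighborhoods of the marked points of $\hat{\cO}^w$, which $\ell_c$ avoids), we have $\#(\hat{\ell}\cap\ell_c)=\#(\ell\cap\ell_c)$.

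Next I would translate the statement back to $\cO^w$ using \eqref{eq:shear coordinates}. If $\delta\in T$ is not a pending arc, then $b_{\delta,T}(-)=b_{\hat{\delta},\hat{T}}(\widehat{(-)})$ and the conclusion is immediate by setting $m':=m'_{\hat{\delta}}$. If $\delta$ is a pending arc with $\hat{\delta}=\{\gamma',\gamma''\}$, then
\[
b_{\delta,T}(-)=c\,\bigl(b_{\gamma',\hat{T}}(\widehat{(-)})+b_{\gamma'',\hat{T}}(\widehat{(-)})\bigr)
\]
with $c=1$ or $c=\tfrac12$ depending on $w$. Choosing $m':=\max\{m'_{\gamma'},m'_{\gamma''}\}$ and adding the two linear relations obtained from the marked-surface case gives
\[
b_{\delta,T}\bigl(\sT_{\ell_c}^m(\ell)\bigr)=b_{\delta,T}\bigl(\sT_{\ell_c}^{m'}(\ell)\bigr)+(m-m')\,\#(\ell\cap\ell_c)\,b_{\delta,T}(\ell_c)
\]
for all $m\ge m'$, as desired.

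The one subtlety, which I view as the main (but still minor) point to verify, is that the Dehn twist and the $\widehat{(-)}$ construction are compatible in the strong sense needed here, namely that iterating $\sT_{\ell_c}$ on $\cO^w$ corresponds to iterating $\sT_{\ell_c}$ on $\hat{\cO}^w$ on the nose (equation \eqref{eq:compatibility}), so that the intersection count with $\ell_c$ and the linear-growth behaviour of shear coordinates are transported without distortion. Once this compatibility is in hand, the argument is a direct application of the marked-surface result followed by a careful choice of a uniform $m'$ handling the two components of $\hat{\delta}$ in the pending-arc case.
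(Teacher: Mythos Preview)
Your proposal is correct and follows essentially the same approach as the paper: reduce to the marked-surface case via the map $\widehat{(-)}$ and the defining formula \eqref{eq:shear coordinates}, using the compatibility \eqref{eq:compatibility} and the equality $\#(\hat{\ell}\cap\ell_c)=\#(\ell\cap\ell_c)$. Your treatment is in fact slightly more careful than the paper's, since you explicitly take $m'=\max\{m'_{\gamma'},m'_{\gamma''}\}$ in the pending-arc case rather than silently assuming a common threshold.
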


\begin{proof}
This is given in \cite[Theorem 2.10]{Y20} for the case that $\cO^w$ has no orbifold points. The general assertion reduces to this case by \eqref{eq:shear coordinates} since $\hat{\cO}^w$ has no orbifold points. In fact, if $\delta$ is a pending arc with weight $\frac{1}{2}$ and $\hat{\delta}=\{\delta',\delta''\}$, then we can take some $m'\in\bZ_{\ge 0}$ such that
\[\arraycolsep=0.5mm\def\arraystretch{1.7}\begin{array}{llll}
b_{\delta,T}(\sT_{\ell_c}^m(\ell))&=&b_{\delta',\hat{T}}(\widehat{\sT_{\ell_c}^m(\ell)})+b_{\delta'',\hat{T}}(\widehat{\sT_{\ell_c}^m(\ell)})&(\text{by }\eqref{eq:shear coordinates})\\
&=&b_{\delta',\hat{T}}(\sT_{\hat{\ell_c}}^m(\hat{\ell}))+b_{\delta'',\hat{T}}(\sT_{\hat{\ell_c}}^m(\hat{\ell}))&(\text{by }\eqref{eq:compatibility})\\
&=&b_{\delta',\hat{T}}(\sT_{\hat{\ell_c}}^{m'}(\hat{\ell}))+b_{\delta'',\hat{T}}(\sT_{\hat{\ell_c}}^{m'}(\hat{\ell}))
+(m-m')\#(\hat{\ell}\cap\hat{\ell_c})(b_{\delta',\hat{T}}(\hat{\ell_c})+b_{\delta'',\hat{T}}(\hat{\ell_c}))\\
&&\multicolumn{2}{r}{(\text{by \cite[Theorem 2.10]{Y20}})}\\
&=&b_{\delta',\hat{T}}(\widehat{\sT_{\ell_c}^{m'}(\ell)})+b_{\delta'',\hat{T}}(\widehat{\sT_{\ell_c}^{m'}(\ell)})
+(m-m')\#(\hat{\ell}\cap\hat{\ell_c})(b_{\delta',\hat{T}}(\hat{\ell_c})+b_{\delta'',\hat{T}}(\hat{\ell_c}))&(\text{by }\eqref{eq:compatibility})\\
&=&b_{\delta,T}(\sT_{\ell_c}^{m'}(\ell))+(m-m')\#(\ell\cap\ell_c)b_{\delta,T}(\ell_c)&(\text{by }\eqref{eq:shear coordinates}).
\end{array}\]

Similarly, we get the desired equality for any $\delta\in T$.
\end{proof}

\begin{example}\label{ex:dense lam}
We keep the notations of Example \ref{ex:monogon}. For any $m\ge 1$, we have
\[
b_T(\sT_c^m(l_{-1}))=b_T(l_{m-1})=b_T(l_0)+2(m-1)b_T(c)
=b_T(\sT_c(l_{-1}))+(m-1)\#(l_{-1}\cap c)b_T(c).
\]
Thus Theorem \ref{thm:Dehn} holds for $\ell_c=c$ and $\ell=l_{-1}$ by taking $m'=1$. We also have
\[
\lim_{m\rightarrow\infty}\frac{b_T(l_{m-1})}{2(m-1)}=b_T(c),\ \text{ thus }\ b_T(c)\in\overline{\bigcup_{m\ge0}C(\{l_{m}\})}.
\]
On the other hand, all laminations obtained from tagged triangulations of $\cO^w$ by applying $\se$ are given by $\{l_i,r_i\}$ and $\{l_i,r_{i+1}\}$ for $i \in \bZ$. Comparing with Example \ref{ex:monogon}, it is easy to check that the union of their cones $C(\{l_i,r_i\})$ and $C(\{l_i,r_{i+1}\})$ contains $\bZ\setminus C(\{c\})$. The above inclusion means that $C(\{c\})$ is also contained in the union. Therefore, Theorem \ref{thm:dense lam} holds for the weighted orbifold in Example \ref{ex:monogon}.
\end{example}

\subsection{Proof of the first assertion of Theorem \ref{thm:dense lam}}

In this subsection, we fix a tagged triangulation $T$ and a lamination $L$ on $\cO^w$. By Theorem \ref{thm:thurston}, to prove the first assertion of Theorem \ref{thm:dense lam}, we only need to show that
\begin{equation}\label{eq:vectin}
 b_T(L) \in \overline{\bigcup_{T'}C(\se(T'))},
\end{equation}
where $T'$ runs over all tagged triangulations of $\cO^w$, since the cones are closed under positive real number multiplication. To prove \eqref{eq:vectin}, we need some preparation. We consider a decomposition
\begin{equation*}
L = L_{\rm e} \sqcup L_{\rm sc} \sqcup L_{\rm cl},
\end{equation*}
where $L_{\rm e}$ (resp., $L_{\rm sc}$, $L_{\rm cl}$) consists of all elementary and exceptional (resp., semi-closed, closed) laminates of $L$. By Proposition \ref{prop:properties of pq}(2), $\se^{-1}((L_{\rm e})_{\sf pq})$ is a partial tagged triangulation of $\cO^w$. Then we take a partial tagged triangulation $U$ of $\cO^w$ such that $T_L := \se^{-1}((L_{\rm e})_{\sf pq}) \sqcup U$ is a tagged triangulation.

\begin{lemma}\label{lem:intersect U}
Any closed laminate $\ell$ of $L_{\rm cl}$ does not intersect with tagged arcs of $\se^{-1}((L_{\rm e})_{\sf pq})$, but it intersects with at least one tagged arc of $U$. In particular, it also intersects with at least one laminate of $\se U$.
\end{lemma}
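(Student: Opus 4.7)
The plan is to prove both assertions by pure topology, following the marked-surface blueprint of~\cite{Y20}. For the first assertion, I would fix $\delta\in\se^{-1}((L_{\rm e})_{\sf pq})$ and split on whether $\se(\delta)$ is itself an elementary laminate of $L_{\rm e}$, or of the form $\ell'_{\sf p}$ or $\ell'_{\sf q}$ for some exceptional $\ell'\in L_{\rm e}$. In the first case, $\se(\delta)$ is isotopic to $\delta$ away from its endpoints, and since $\ell$ is disjoint from $\se(\delta)$ by compatibility of the lamination $L$, the closed curve $\ell$ is also disjoint from $\delta$. In the second case, by Proposition~\ref{prop:excep} the laminate $\ell'$ bounds, together with a boundary segment or a small neighborhood of a common puncture, a disk $D$ containing exactly one puncture or one weight-$\frac{1}{2}$ orbifold point; both $\ell'_{\sf p},\ell'_{\sf q}$ and the corresponding tagged arcs $\se^{-1}(\ell'_{\sf p}),\se^{-1}(\ell'_{\sf q})$ are contained in $D$. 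Since $\ell$ is disjoint from $\ell'$ and cannot lie entirely inside $D$ (else it would bound a disk with at most one puncture or orbifold point and fail to be a laminate), it must lie outside $D$ and hence miss $\delta$.

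For the second assertion, I would use that the arcs of the tagged triangulation $T_L=\se^{-1}((L_{\rm e})_{\sf pq})\sqcup U$ decompose $\cO^w$ into pieces of the types shown in Figures~\ref{Fig:non-exceptional-triangles} and~\ref{Fig:exceptional-triangles}, each of which is a topological disk containing at most one puncture or orbifold point in its interior. If $\ell$ avoided every arc of $T_L$ it would be a closed curve inside one such piece, hence bound a disk with at most one puncture or orbifold point, contradicting that $\ell$ is a valid closed laminate. Combined with the first assertion this forces $\ell$ to intersect some $\delta\in U$. For the final ``in particular'' clause, I would verify separately that $\ell\cap\delta\neq\emptyset$ implies $\ell\cap\se(\delta)\neq\emptyset$: away from weight-$2$ pending arcs this is immediate because $\se(\delta)$ is isotopic to $\delta$ in the interior of $S$; when $\delta$ is a weight-$2$ pending arc joining $o\in M$ to $q\in Q$, the laminate $\se(\delta)$ is constructed from a loop at $o$ enclosing $q$, so it bounds a disk $D_\delta$ whose only essential interior feature is $q$ and which contains $\delta$ in its closure, and the same enclosure argument forces any $\ell$ disjoint from $\se(\delta)$ to be disjoint from $\delta$.

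The main obstacle I anticipate is handling the weight-$2$ pending arcs cleanly in the last step, since there $\se(\delta)$ is no longer isotopic to $\delta$ in the interior and the intersection comparison has to go through the disk-enclosure argument rather than through isotopy invariance of minimal intersection number. Everything else reduces to the compatibility of the laminates in $L$ and to the basic fact that a closed curve contained in a disk with at most one essential point cannot be a valid laminate of $\cO^w$.
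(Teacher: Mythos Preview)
Your proposal is correct and follows essentially the same topological strategy as the paper: closed laminates avoid marked points, so the passage from $L_{\rm e}$ to $\se^{-1}((L_{\rm e})_{\sf pq})$ cannot create intersections with $\ell$; and a closed laminate must cross some arc of any tagged triangulation because the triangle pieces are disks with at most one interior marked point.

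The only difference is one of economy. Where you split into cases (elementary versus $\ell'_{\sf p},\ell'_{\sf q}$ from an exceptional $\ell'$) and invoke a disk-enclosure argument for the latter, the paper dispatches the first assertion in one line by observing that both $(-)_{\sf pq}$ and $\se^{-1}$ modify curves only in neighborhoods of marked points, while the closed laminate $\ell$ stays away from all such neighborhoods. This uniform observation also covers your ``main obstacle'' in the final clause: for a weight-$2$ pending arc $\delta$, the laminate $\se(\delta)$ agrees with the enclosing loop outside a neighborhood of the endpoint $o\in M$, so again the intersection with $\ell$ is unaffected. Your disk-enclosure arguments are fine, but the paper's single principle avoids the need for them.
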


\begin{proof}
Since $L$ is a lamination, any closed laminate $\ell$ of $L_{\rm cl}$ does not intersect with all laminates of $L_{\rm e}$. Thus $\ell$ does not intersect with all tagged arcs of $\se^{-1}((L_{\rm e})_{\sf pq})$ since $\se^{-1}$ and $(-)_{\sf pq}$ only affect neighborhoods of marked points of $\cO^w$. Moreover, since $T_L$ is a tagged triangulation and $\ell$ is not contractible, $\ell$ intersects with at least one tagged arc $\gamma$ of $T_L$. Since $\gamma$ is contained in $T_L\setminus(\se^{-1}((L_{\rm e})_{\sf pq}))=U$, the first assertion holds. The last assertion immediately follows from the definition of $\se$.
\end{proof}

By Lemma \ref{lem:intersect U}, we have $\sT_{\ell}^m(T_L) = \se^{-1}((L_{\rm e})_{\sf pq}) \sqcup \sT_{\ell}^m(U)$ for any $m\ge0$ and it is a tagged triangulation. Then Proposition \ref{prop:properties of pq}(3) gives
\begin{equation}\label{eq:CDehn}
C(\se\sT_{\ell}^m(T_L)) = C((L_{\rm e})_{\sf pq} \sqcup \se \sT_{\ell}^m(U)).
\end{equation}

Let $\ell_1,\ldots,\ell_t$ be all distinct closed laminates of $L_{\rm cl}$ and $n_i$ the multiplicity of $\ell_i$ in $L_{\rm cl}$ for $i \in \{1,\ldots,t\}$. By Lemma \ref{lem:intersect U}, $N_i := \sum_{\varepsilon \in U}\#(\ell_i\cap\se\varepsilon)$ is not zero. Note that $\#(\ell_i\cap\se\varepsilon)=2\#(\ell_i\cap\varepsilon)$ if $\varepsilon$ is a pending arc with weight $2$. Since $\ell_i$ do not intersect, the Dehn twists $\sT_{\ell_i}$ commute. We set
\[
 \sT := \prod_{i=1}^t \sT_{\ell_i}^{\frac{N_1 \cdots N_t}{N_i}n_i}.
\]
The following is proved in \cite[Proposition 2.14]{Y20} for the case that $\cO^w$ has no orbifold points. Similarly, we can get the similar statement for the general case.

\begin{proposition}\label{prop:inclusion}
Suppose that $L_{\rm sc}=\emptyset$. Then we have
\[
 b_T(L) \in \overline{\bigcup_{m \ge 0} C(\se\sT^m(T_L))}.
\]
\end{proposition}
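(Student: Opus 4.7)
The plan is to exhibit an explicit family of vectors $v_m \in C(\se\sT^m(T_L))$, indexed by $m$, that converges to $b_T(L)$, thereby placing $b_T(L)$ in the closure on the right-hand side. Using $L_{\rm sc}=\emptyset$ together with Lemma \ref{lem:=p+q}, I decompose
\[
b_T(L) \;=\; \sum_{\ell' \in (L_{\rm e})_{\sf pq}} b_T(\ell') \;+\; \sum_{i=1}^{t} n_i\, b_T(\ell_i),
\]
where the first sum is counted with multiplicities and is already a nonnegative integer combination of the generators of $C((L_{\rm e})_{\sf pq})$. The task therefore reduces to approximating the closed-laminate contribution $\sum_i n_i\, b_T(\ell_i)$ by a nonnegative combination of the vectors $\{b_T(\sT^m(\se\varepsilon))\}_{\varepsilon\in U}$.

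The key input is an iterated version of Theorem \ref{thm:Dehn}: since the $\ell_i$ are pairwise disjoint, the Dehn twists $\sT_{\ell_i}$ commute, and I expect that for each $\varepsilon\in U$ there exist a constant vector $C_\varepsilon \in \bR^{|T|}$ and $m_\varepsilon \ge 0$ such that
\[
b_T(\sT^m(\se\varepsilon)) \;=\; C_\varepsilon + m\sum_{i=1}^{t} \frac{N_1\cdots N_t}{N_i}\, n_i\, \#(\se\varepsilon\cap \ell_i)\, b_T(\ell_i)
\]
for all $m \ge m_\varepsilon$. Granting this, choose $m \ge \max_\varepsilon m_\varepsilon$ and set
\[
v_m \;:=\; \sum_{\ell' \in (L_{\rm e})_{\sf pq}} b_T(\ell') \;+\; \frac{1}{m\, N_1\cdots N_t}\sum_{\varepsilon \in U} b_T(\sT^m(\se\varepsilon)).
\]
By \eqref{eq:CDehn}, $v_m \in C((L_{\rm e})_{\sf pq}\sqcup \se\sT^m(U)) = C(\se\sT^m(T_L))$. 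Substituting the asymptotic and swapping the summations, the coefficient of each $b_T(\ell_i)$ becomes
\[
\frac{1}{N_1\cdots N_t}\cdot \frac{N_1\cdots N_t}{N_i}\, n_i \sum_{\varepsilon\in U}\#(\se\varepsilon\cap \ell_i) \;=\; \frac{n_i\, N_i}{N_i} \;=\; n_i,
\]
while the contribution of the $C_\varepsilon$'s is $\tfrac{1}{m\,N_1\cdots N_t}\sum_\varepsilon C_\varepsilon$, which tends to $0$. Therefore $v_m \to b_T(L_{\rm e}) + \sum_i n_i b_T(\ell_i) = b_T(L)$, yielding the desired containment in the closure.

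The main obstacle is justifying the iterated asymptotic for $\sT^m$, since Theorem \ref{thm:Dehn} is stated for a single Dehn twist and its threshold $m'$ depends on the laminate being twisted, so naive iteration would leave thresholds drifting with $m$. This is resolved by exploiting the disjointness of the $\ell_i$: each twist $\sT_{\ell_i}$ is supported in a tubular neighborhood $V_i$ of $\ell_i$, the $V_i$ can be chosen pairwise disjoint, and hence $\sT_{\ell_i}$ fixes a neighborhood of $\ell_j$ for $i\ne j$ and preserves $\#({-}\cap\ell_j)$. An induction on $t$, applying Theorem \ref{thm:Dehn} one factor at a time and using that the intersection numbers with the remaining $\ell_j$'s are unaffected, produces a uniform constant term $C_\varepsilon$ and establishes the required formula, following the strategy of \cite[Proposition 2.14]{Y20}.
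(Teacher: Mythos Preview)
Your proof is correct and follows essentially the same approach as the paper: both establish an iterated Dehn-twist asymptotic $b_T(\se\sT^m(U)) = \text{const} + (m-m')N_1\cdots N_t\, b_T(L_{\rm cl})$ for large $m$, use it to produce elements of $C(\se\sT^m(T_L))$ converging to $b_T(L)$, and invoke \eqref{eq:CDehn} together with $C(L_{\rm e})\subseteq C((L_{\rm e})_{\sf pq})$ to handle the elementary/exceptional part. The paper simply asserts the iterated formula ``by Theorem~\ref{thm:Dehn}'' and phrases the conclusion as $b_T(L_{\rm cl})\in\overline{\bigcup_m C(\se\sT^m(U))}$ before adding $b_T(L_{\rm e})$, while you make the induction on $t$ explicit and build the approximating sequence $v_m$ directly; these are cosmetic differences.
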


\begin{proof}
 By Theorem \ref{thm:Dehn}, there is $m' \gg 0$ such that for any $m \ge m'$, we have
{\setlength\arraycolsep{0.5mm}
\begin{eqnarray*}
 b_T(\se\sT^{m}(U)) &=& b_T(\se\sT^{m'}(U)) + \sum_{i=1}^t \Bigl(\frac{N_1 \cdots N_t}{N_i}n_i\Bigr) (m-m') N_i b_T(\ell_i)\\
 &=& b_T(\se\sT^{m'}(U)) + (m-m') N_1 \cdots N_t \sum_{i=1}^t n_i b_T(\ell_i)\\
 &=& b_T(\se\sT^{m'}(U)) + (m-m') N_1 \cdots N_t b_T(L_{\rm cl}).
\end{eqnarray*}}\hspace{-1.5mm}
This equality gives
\[
 \lim_{m \rightarrow \infty} \frac{b_T(\se\sT^m(U))}{m-m'} = N_1 \cdots N_t b_T(L_{\rm cl}),\ \text{ thus }\ b_T(L_{\rm cl}) \in \overline{\bigcup_{m \ge 0} C(\se\sT^m(U))}.
\]
 Since $C(L_{\rm e}) \subseteq C((L_{\rm e})_{\sf pq})$ by Proposition \ref{prop:properties of pq}(1), we have
{\setlength\arraycolsep{0.5mm}
\begin{eqnarray*}
 b_T(L) = b_T(L_{\rm e}) + b_T(L_{\rm cl}) &\in& C(L_{\rm e}) + \overline{ \bigcup_{m \ge 0} C(\se\sT^m(U))}\\
 &\subseteq& \overline{\bigcup_{m \ge 0} C((L_{\rm e})_{\sf pq} \sqcup \se\sT^m(U))}= \overline{\bigcup_{m \ge 0} C(\se\sT^m(T_L))},
\end{eqnarray*}}\hspace{-1.5mm}
where the last equality is given by \eqref{eq:CDehn}.
\end{proof}

\begin{proof}[Proof of the first assertion of Theorem \ref{thm:dense lam}]
By Corollary \ref{cor:double}, $L$ satisfies \eqref{eq:vectin} if and only if $\overline{L}$ is so. Without loss of generality, we can assume that $L_{\rm sc}=\emptyset$ since $\overline{L}_{\rm sc}=\emptyset$.  Since $\sT^m(T_L)$ is a tagged triangulation of $\cO^w$ for any $m \in \bZ_{\ge 0}$, Proposition \ref{prop:inclusion} finishes the proof of \eqref{eq:vectin}. Hence the desired assertion holds.
\end{proof}

\subsection{Proof of the second assertion of Theorem \ref{thm:dense lam}}

To prove the second assertion of Theorem \ref{thm:dense lam}, we first recall the following result for the case that $\cO^w$ has no orbifold points.

\begin{proposition}[{\cite[Proposition 2.15]{Y20}}]\label{prop:Y20 sum}
Suppose that $\cO^w$ has no orbifold points. Let $T$ be an ideal triangulation without self-folded triangles and $\ell$ a laminate of $\cO^w$ whose both ends are spirals. Then we have
\[
\sum_{\gamma \in T}b_{\gamma,T}(\ell) = \left\{
\begin{array}{ll}
-1 & \text{if both spirals of $\ell$ are clockwise},\\
1 & \text{if both spirals of $\ell$ are counterclockwise},\\
0 & \text{otherwise}.
\end{array} \right.
\]
\end{proposition}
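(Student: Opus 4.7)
The plan is to compute $\sum_{\gamma\in T}b_{\gamma,T}(\ell)$ by tracking the signed contribution of each essential intersection of $\ell$ with the $1$-skeleton of $T$ and isolating the contributions of the two spiral ends. First, I would orient $\ell$ from its spiral end at a puncture $p$ to its spiral end at a puncture $q$, and enumerate the crossings $x_1,\ldots,x_N$ of $\ell$ with edges $\gamma_1,\ldots,\gamma_N\in T$, so that $\ell$ passes through triangles $\Delta_0,\ldots,\Delta_N$ with $\gamma_i$ separating $\Delta_{i-1}$ from $\Delta_i$. Since $T$ has no self-folded triangles, each edge has a genuine non-degenerate quadrilateral around it, and the sign rule of Figure~\ref{fig:SZ} assigns a value $\epsilon_i\in\{-1,0,+1\}$ to each crossing $x_i$, so that $\sum_{\gamma\in T}b_{\gamma,T}(\ell)=\sum_{i=1}^N\epsilon_i$.

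The core step is to show that the interior contributions telescope. I would assign to each end of every intra-triangle segment of $\ell$ a half-turn value in $\{-\tfrac{1}{2},+\tfrac{1}{2}\}$ encoding the local turning direction at the corresponding edge crossing, and then verify by a case analysis on the possible entry/exit patterns within each triangle that (i) $\epsilon_i$ equals the sum of the two half-turn values at the two sides of the crossing $x_i$, and (ii) the two half-turn values at opposite ends of the same intra-triangle segment in $\Delta_i$ (for $1\le i\le N-1$) sum to zero. Telescoping over $i$ leaves only the two half-turns adjacent to the spiral ends.

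Next, I would compute the contribution of each spiral end by a direct local analysis in the fan of triangles around the corresponding puncture. I claim that the residual half-turn equals $-\tfrac{1}{2}$ for a clockwise spiral and $+\tfrac{1}{2}$ for a counterclockwise one, and the analogous statement holds at the other end. Adding the two endpoint contributions then yields $-1$, $+1$, or $0$ according to the three cases of the statement.

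The main obstacle will be formulating the half-turn values precisely enough to make steps~(i)--(ii) correct in all local cases, and verifying that the residual spiral contribution is a well-defined invariant of the spiral direction alone, independent of the combinatorial representative of $\ell$ chosen near the puncture. The subtle sub-case is $p=q$, where both spirals live in the same fan of triangles at $p$; there one must invoke the laminate conditions recalled in Subsection~\ref{subsec:lam}---which forbid two spirals in the same direction at a common puncture enclosing nothing---to argue that the two half-turn contributions at $p$ decouple cleanly and can be summed independently.
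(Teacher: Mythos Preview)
The present paper does not give its own proof of this statement; it is quoted from \cite[Proposition~2.15]{Y20} and used as a black box, with only the remark afterwards that the same argument extends from elementary to exceptional laminates. So strictly there is no in-paper proof to compare against, only the triangulated-polygon construction $T_\ell$ that the paper sets up immediately after the proposition to handle the self-folded extension; that construction suggests that the proof in \cite{Y20} proceeds by unfolding $\ell$ into a polygon and computing there, which is closely allied to what you propose but phrased globally rather than locally.

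Your telescoping strategy is sound and is the natural approach. One clean way to realise your claims (i)--(ii): for each interior triangle $\Delta_j$ set $t_j=+\tfrac12$ if the segment of $\ell$ in $\Delta_j$ turns left (relative to the orientation of $S$ and the direction of travel along $\ell$) and $t_j=-\tfrac12$ if it turns right. A direct check of the four crossing configurations around $\gamma_i$ gives $\epsilon_i = t_i - t_{i-1}$, so taking the entry half-turn of $\Delta_j$ to be $t_j$ and the exit half-turn to be $-t_j$ makes both (i) and (ii) hold on the nose. The sum then collapses to $t_N - t_0$.

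The only genuine work, as you rightly flag, is at the spiral ends. In minimal position the spiral at $p$ lives in a single triangle $\Delta_0$ having $p$ as a vertex, and $\gamma_1$ is necessarily the side of $\Delta_0$ opposite $p$ (a crossing with an edge incident to $p$ would create a bigon with the spiral). The spiral direction then tells you on which side of the straightened arc $\gamma_\ell$ the laminate sits near $p$, and that is exactly the datum fixing the ``virtual'' turn $t_0$; the computation $t_0=+\tfrac12$ for clockwise and $t_0=-\tfrac12$ for counterclockwise (and the mirror at the $q$-end) drops out. One small caveat: the spiral may cause $\ell$ to cross one more or one fewer edge near $p$ than $\gamma_\ell$ does, so it is indeed safest to enumerate the crossings of $\ell$ itself rather than of $\gamma_\ell$, as you do. Your worry about the $p=q$ case is slightly overstated: the telescoping is purely local at each crossing, so the two spiral contributions decouple automatically, and the non-degeneracy condition on laminates plays no role in the argument beyond guaranteeing $N\ge 1$.
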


Note that \cite[Proposition 2.15]{Y20} only consider elementary laminates, but it can be given for exceptional laminates in the same way.

Next, we consider a sum as in Proposition \ref{prop:Y20 sum} for the case that $T$ has self-folded triangles. Suppose that $\cO^w$ has no orbifold points. Let $T$ be any ideal triangulation and $\ell$ a laminate of $\cO^w$ whose both ends are spirals. We consider the sum
\[
b_T^{\sum}(\ell):=\sum_{\gamma \in T \setminus T_{\rm s}}b_{\gamma,T}(\ell)+\frac{1}{2}\sum_{\gamma \in T_{\rm s}}b_{\gamma,T}(\ell),
\]
where $T_{\rm s}$ is a set of ideal arcs appearing in self-folded triangles of $T$. To get this sum, we construct a triangulated polygon $T_{\ell}$ associated with $\ell$ as follows (see Figure \ref{fig:Tdelta}): Since $\ell$ is either elementary or exceptional, we can obtain an ideal arc $\gamma_{\ell}$ from $\ell$ by applying the same transformation as $\se^{-1}$ and forgetting its tags. Let $\tau_1,\ldots,\tau_n$ be the arcs of $T$ crossing $\gamma_{\ell}$ in order of occurrence along $\gamma_{\ell}$ (possibly $\tau_{i}=\tau_j$ even if $i \neq j$). Hence $\gamma_{\ell}$ crosses $n+1$ triangles $\triangle_0,\ldots,\triangle_n$ in this order. Suppose first that $T$ has no self-folded triangles. For $i \in \{0,\ldots,n\}$, let $\triangle_{\ell,i}$ be a copy of the oriented triangle $\triangle_i$, hence $\triangle_{\ell,i}$ contains the sides $\tau_i$ and $\tau_{i+1}$ (only $\tau_1$ if $i=0$, and only $\tau_n$ if $i=n$). Then $T_{\ell}'$ is the triangulation of an $(n+3)$-gon obtained by gluing these triangles along the edges $\tau_i$. Similarly, we construct $T_{\ell}$ by adjoining to $T_{\ell}'$ copies of all triangles incident to endpoints of $\gamma_{\ell}$. If $T$ has self-folded triangles, then we adapt the construction using the local transformations as in Figure \ref{fig:local trans}.
\begin{figure}[h]
\centering
\begin{tikzpicture}[baseline=-0.5mm]
\coordinate(l)at(0,0);\coordinate(lu)at(0.8,1);\coordinate(ld)at(0.8,-1);
\coordinate(ru)at(3.1,1);\coordinate(rd)at(3.1,-1);\coordinate(rru)at(4.7,1);\coordinate(rrd)at(4.7,-1);
\coordinate(llu)at(-0.8,1);\coordinate(lld)at(-0.8,-1);
\coordinate(r)at(3.9,0);\coordinate(u)at(2.1,1);\coordinate(d)at(1.8,-1);
\node at(2,-0.4) {$\cdots$};
\draw(llu)--(rru)--(rrd)--(lld)--(llu)--(ld)--node[fill=white,inner sep=2,pos=0.3]{$\tau_1$}(lu)--(lld);
\draw(rru)--(rd)--node[fill=white,inner sep=2,pos=0.3]{$\tau_n$}(ru)--(rrd);
\draw[blue](l)--node[above]{$\gamma_{\ell}$}(r);
\draw[loosely dotted](4.4,-0.2)arc(-30:30:0.5);\draw[loosely dotted](-0.5,0.2)arc(150:210:0.5);
\fill(l)circle(0.07);\fill(llu)circle(0.07);\fill(lld)circle(0.07);\fill(lu)circle(0.07);\fill(ld)circle(0.07);
\fill(r)circle(0.07);\fill(rru)circle(0.07);\fill(rrd)circle(0.07);\fill(ru)circle(0.07);\fill(rd)circle(0.07);
\end{tikzpicture}
\caption{A triangulated polygon $T_{\ell}$ for a laminate $\ell$ whose both ends are spirals}\label{fig:Tdelta}
\end{figure}
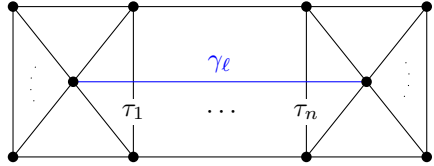
\begin{figure}[h]
\centering
\begin{tikzpicture}[baseline=7mm]
\coordinate(d)at(0,0);\coordinate(p)at(0,1.3);\node(de)at(-1.3,0.4){$\gamma_{\ell}$};
\draw(d)--(p);
\draw(d)to[out=150,in=180](0,2);\draw(d)to[out=30,in=0](0,2);
\draw[blue,dotted](de)--(0.7,0.4);\draw[blue,dotted](de)--(d);\draw[blue,dotted](de)--(p);
\fill(d)circle(0.07);\fill(p)circle(0.07);
\end{tikzpicture}
\hspace{3mm}$\rightarrow$\hspace{3mm}
\begin{tikzpicture}[baseline=7mm]
\coordinate(d)at(0,0);\coordinate(l)at(-1,1.7);\coordinate(p)at(0,1.2);\coordinate(r)at(1,1.7);\node(de)at(-1.3,0.4){$\gamma_{\ell}$};
\draw(d)--(l)--(r)--(d)--(p) (l)--(p)--(r);
\draw[blue,dotted](de)--(0.7,0.4);\draw[blue,dotted](de)--(d);\draw[blue,dotted](de)--(p);
\fill(d)circle(0.07);\fill(p)circle(0.07);\fill(l)circle(0.07);\fill(r)circle(0.07);
\end{tikzpicture}
\caption{Local transformations around each self-folded triangle}\label{fig:local trans}
\end{figure}
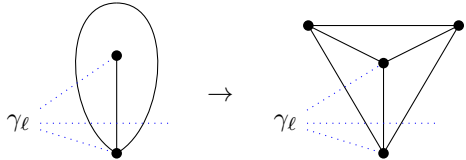

Since $T_{\ell}$ has no self-folded triangles, the sum $b_{T_{\ell}}^{\sum}(\ell)$ is given by Proposition \ref{prop:Y20 sum}. Using this sum, we give the sum $b_T^{\sum}(\ell)$. For that, we denote by $p$ and $q$ endpoints of $\gamma_{\ell}$ and we say that each of them is \emph{folded} (with respect to $T$) if it is inside a self-folded triangle of $T$. Then we consider the following cases.
\begin{itemize}
\item[(1)] Suppose that $p$ and $q$ are not folded. Since $T$ is an ideal triangulation, each self-folded triangle of $T$ must be in a digon. Thus when $\ell$ crosses over a self-folded triangle of $T$, the corresponding segment of $\ell$ on $T_{\ell}$ is given by
\[
\begin{tikzpicture}[baseline=9mm]
\coordinate(d)at(0,0);\coordinate(p)at(0,1);\coordinate(u)at(0,2);
\draw(d)--(p);
\draw(d)to[out=150,in=180](0,1.5);\draw(d)to[out=30,in=0](0,1.5);
\draw(d)to[out=180,in=180](u);\draw(d)to[out=0,in=0](u);
\draw[blue](0.8,1.3)to[out=180,in=90](-0.2,1);
\draw[blue](-0.2,1)to[out=-90,in=180](0.8,0.7);
\fill(d)circle(0.07);\fill(p)circle(0.07);\fill(u)circle(0.07);
\end{tikzpicture}
\hspace{3mm}\rightarrow\hspace{3mm}
\begin{tikzpicture}[baseline=9mm]
\coordinate(d)at(0,0);\coordinate(l)at(-1,1.7);\coordinate(p)at(0,1.2);\coordinate(r)at(1,1.7);
\coordinate(ll)at(-1.5,0.7);\coordinate(rr)at(1.5,0.7);
\draw(d)--(l)--(r)--(d)--(p) (l)--(p)--(r) (l)--(ll)--(d)--(rr)--(r);
\draw[blue](-1.5,1.3)--(1.5,0.3);
\fill(d)circle(0.07);\fill(p)circle(0.07);\fill(l)circle(0.07);\fill(r)circle(0.07);\fill(ll)circle(0.07);\fill(rr)circle(0.07);
\end{tikzpicture}
\ ,\hspace{7mm}
\begin{tikzpicture}[baseline=9mm]
\coordinate(d)at(0,0);\coordinate(p)at(0,1);\coordinate(u)at(0,2);
\draw(d)--(p);
\draw(d)to[out=150,in=180](0,1.5);\draw(d)to[out=30,in=0](0,1.5);
\draw(d)to[out=180,in=180](u);\draw(d)to[out=0,in=0](u);
\draw[blue](-0.8,1.3)to[out=0,in=90](0.2,1);
\draw[blue](0.2,1)to[out=-90,in=0](-0.8,0.7);
\fill(d)circle(0.07);\fill(p)circle(0.07);\fill(u)circle(0.07);
\end{tikzpicture}
\hspace{3mm}\rightarrow\hspace{3mm}
\begin{tikzpicture}[baseline=9mm]
\coordinate(d)at(0,0);\coordinate(l)at(-1,1.7);\coordinate(p)at(0,1.2);\coordinate(r)at(1,1.7);
\coordinate(ll)at(-1.5,0.7);\coordinate(rr)at(1.5,0.7);
\draw(d)--(l)--(r)--(d)--(p) (l)--(p)--(r) (l)--(ll)--(d)--(rr)--(r);
\draw[blue](-1.5,0.3)--(1.5,1.3);
\fill(d)circle(0.07);\fill(p)circle(0.07);\fill(l)circle(0.07);\fill(r)circle(0.07);\fill(ll)circle(0.07);\fill(rr)circle(0.07);
\end{tikzpicture}\ .
\]
Then it is easy to check that $b_T^{\sum}(\ell)=b_{T_{\ell}}^{\sum}(\ell)$ (see also Example \ref{ex:digon}).
\item[(2)] Suppose that $p$ is not folded and $q$ is folded. If the spiral of $\ell$ at $q$ is clockwise, then the corresponding end of $\ell$ at $q$ on $T_{\ell}$ is given by
\[
\begin{tikzpicture}[baseline=9mm]
\coordinate(d)at(0,0);\coordinate(p)at(0,1);\coordinate(u)at(0,2);
\draw(d)--(p);
\draw(d)to[out=150,in=180](0,1.5);\draw(d)to[out=30,in=0](0,1.5);
\draw(d)to[out=180,in=180](u);\draw(d)to[out=0,in=0](u);
\draw[blue](-0.2,1)to[out=-90,in=180](0.8,0.7);
\draw[blue](-0.2,1)arc(180:-60:0.2);
\fill(d)circle(0.07);\fill(p)circle(0.07);\fill(u)circle(0.07);
\end{tikzpicture}
\hspace{3mm}\rightarrow\hspace{3mm}
\begin{tikzpicture}[baseline=9mm]
\coordinate(d)at(0,0);\coordinate(l)at(-1,1.7);\coordinate(p)at(0,1.2);\coordinate(r)at(1,1.7);
\coordinate(ll)at(-1.5,0.7);\coordinate(rr)at(1.5,0.7);
\draw(d)--(l)--(r)--(d)--(p) (l)--(p)--(r) (l)--(ll)--(d)--(rr)--(r);
\draw[blue](-1.5,1.3)to[out=-30,in=180](0,1.4);
\draw[blue](0,1.4)arc(90:-180:0.2);
\fill(d)circle(0.07);\fill(p)circle(0.07);\fill(l)circle(0.07);\fill(r)circle(0.07);\fill(ll)circle(0.07);\fill(rr)circle(0.07);
\end{tikzpicture}
\ ,\hspace{7mm}
\begin{tikzpicture}[baseline=9mm]
\coordinate(d)at(0,0);\coordinate(p)at(0,1);\coordinate(u)at(0,2);
\draw(d)--(p);
\draw(d)to[out=150,in=180](0,1.5);\draw(d)to[out=30,in=0](0,1.5);
\draw(d)to[out=180,in=180](u);\draw(d)to[out=0,in=0](u);
\draw[blue](-0.8,1.3)to[out=0,in=90](0.2,1);
\draw[blue](0.2,1)arc(0:-240:0.2);
\fill(d)circle(0.07);\fill(p)circle(0.07);\fill(u)circle(0.07);
\end{tikzpicture}
\hspace{3mm}\rightarrow\hspace{3mm}
\begin{tikzpicture}[baseline=9mm]
\coordinate(d)at(0,0);\coordinate(l)at(-1,1.7);\coordinate(p)at(0,1.2);\coordinate(r)at(1,1.7);
\coordinate(ll)at(-1.5,0.7);\coordinate(rr)at(1.5,0.7);
\draw(d)--(l)--(r)--(d)--(p) (l)--(p)--(r) (l)--(ll)--(d)--(rr)--(r);
\draw[blue](-1.2,0.2)to[out=60,in=180](0,1.4);
\draw[blue](0,1.4)arc(90:-180:0.2);
\fill(d)circle(0.07);\fill(p)circle(0.07);\fill(l)circle(0.07);\fill(r)circle(0.07);\fill(ll)circle(0.07);\fill(rr)circle(0.07);
\end{tikzpicture}\ .
\]
Combined with (1), it is easy to check that $b_T^{\sum}(\ell)=b_{T_{\ell}}^{\sum}(\ell)+\frac{1}{2}$. Similarly, if the spiral of $\ell$ at $q$ is counterclockwise, then we have $b_T^{\sum}(\ell)=b_{T_{\ell}}^{\sum}(\ell)-\frac{1}{2}$ (see also Example \ref{ex:digon}). In particular, by Proposition \ref{prop:Y20 sum}, we have
\[
b_T^{\sum}(\ell)=\left\{
\begin{array}{ll}
-\frac{1}{2} & \text{if the spiral of $\ell$ at $p$ is clockwise},\\
\frac{1}{2} & \text{if the spiral of $\ell$ at $p$ is counterclockwise}.
\end{array} \right.
\]
\item[(3)] Suppose that $p$ and $q$ are folded. In the same way as (2), we get $b_T^{\sum}(\ell)=0$.
\end{itemize}

Finally, we consider the case that $\cO^w$ has orbifold points. Let $T$ be any ideal triangulation and $\ell$ a laminate of $\cO^w$ with each end being either a spiral or an orbifold point with weight $\frac{1}{2}$. Then we consider the sum
\[
b_T^{\sum}(\ell):=\sum_{\gamma \in T \setminus T_{\rm ps}}b_{\gamma,T}(\ell)+\frac{1}{2}\sum_{\gamma \in T_{\rm ps}}b_{\gamma,T}(\ell),
\]
where $T_{\rm ps}$ is the set of pending arcs of $T$ with weight $\frac{1}{2}$ and ideal arcs appearing in self-folded triangles of $T$. By \eqref{eq:shear coordinates}, we have $b_T^{\sum}(\ell)=b_{\hat{T}}^{\sum}(\hat{\ell})$. Thus it is given by the above observation since $\hat{\cO}^w$ has no orbifold points.

\begin{proof}[Proof of the second assertion of Theorem \ref{thm:dense lam}]
Suppose that $\cO^w$ has empty boundary and exactly one puncture $p$. Let $T$ be a tagged triangulation and $\ell$ an elementary or exceptional laminate of $\cO^w$. Without loss of generality, we can assume that $T$ is an ideal triangulation without self-folded triangles. If both ends of $\ell$ are spirals at $p$, then the above observation gives
\[
b_T^{\sum}(\ell)=b_{\hat{T}}^{\sum}(\hat{\ell})=\left\{
\begin{array}{ll}
-1 & \text{if both spirals of $\ell$ at $p$ are clockwise},\\
1 & \text{if both spirals of $\ell$ at $p$ are counterclockwise}.
\end{array} \right.
\]
If an end of $\ell$ is an orbifold point with weight $\frac{1}{2}$, then the above observation also gives
\[
b_T^{\sum}(\ell)=b_{\hat{T}}^{\sum}(\hat{\ell})=\left\{
\begin{array}{ll}
-\frac{1}{2} & \text{if a spiral of $\ell$ at $p$ is clockwise},\\
\frac{1}{2} & \text{if a spiral of $\ell$ at $p$ is counterclockwise}.
\end{array} \right.
\]
In particular, in both cases, we have
\[
b_T^{\sum}(\ell)\left\{
\begin{array}{ll}
<0 & \text{if spirals of $\ell$ at $p$ are clockwise},\\
>0 & \text{if spirals of $\ell$ at $p$ are counterclockwise}.
\end{array} \right.
\]
Therefore, for a tagged arc $\delta$ of $\cO^w$, we also have
\[
b_T^{\sum}(\se(\delta))\left\{
\begin{array}{ll}
<0 & \text{if tags of $\delta$ at $p$ are plain},\\
>0 & \text{if tags of $\delta$ at $p$ are notched}.
\end{array} \right.
\]
Thus the desired assertion follows from the first assertion of Theorem \ref{thm:dense lam}.
\end{proof}

\section{Denseness of $g$-vector cones from weighted orbifolds}\label{sec:cluster algebra}
\subsection{Cluster algebras}

We recall the definition of cluster algebras for a special case, that is, cluster algebras with principal coefficients \cite{FZ07}. We only study this case and refer to \cite{FZ02,FZ07} for a general definition of cluster algebras. Before giving the definition, we prepare some notations.

Let $m \ge n$ be positive integers and $A=(a_{ij})$ an $m\times n$ integer matrix whose upper part $(a_{ij})_{1 \le i, j \le n}$ is skew-symmetrizable, that is, there are positive numbers $d_1,\ldots,d_n$ such that $a_{ij}d_j=-a_{ji}d_i$ for $1 \le i,j \le n$. For $1 \le k \le n$, the \emph{mutation of $A$ at $k$} is the matrix $\mu_k A=(a'_{ij})$ given by
\[
a'_{ij} =  \left\{\begin{array}{ll}
 -a_{ij} & \mbox{if} \ \ i=k \ \ \mbox{or} \ \ j=k,\\
 a_{ij}+a_{ik}[a_{kj}]_+ + [-a_{ik}]_+a_{kj} & \mbox{otherwise},
\end{array} \right.
\]
where $[a]_+:={\rm max}(a,0)$. It is easy to check that the upper part of $\mu_k A$ is still skew-symmetrizable. Moreover, $\mu_k$ is an involution, that is, $\mu_k \mu_kA=A$.

\begin{example}\label{ex:matrix}
For the following $4\times 2$ matrix whose upper part is skew-symmetrizable, a sequence of its mutations is given by
\[
\begin{bmatrix}0&-1\\4&0\\1&0\\0&1\end{bmatrix}\xleftrightarrow{\mu_2}
\begin{bmatrix}0&1\\-4&0\\1&0\\4&-1\end{bmatrix}\xleftrightarrow{\mu_1}
\begin{bmatrix}0&-1\\4&0\\-1&1\\-4&3\end{bmatrix}\xleftrightarrow{\mu_2}
\begin{bmatrix}0&1\\-4&0\\3&-1\\8&-3\end{bmatrix}.
\]
\end{example}

Let $\cF$ be the field of rational functions in $2n$ variables over $\bQ$. A \emph{seed} (with coefficients) is a pair $(\bx,A)$ consisting of the following data:
\begin{itemize}
\item $\bx=(x_1,\ldots,x_n,y_1,\ldots,y_n)$ is a free generating set of $\cF$ over $\bQ$;
\item $A=(a_{ij})$ is a $2n \times n$ integer matrix whose upper part is skew-symmetrizable.
\end{itemize}
Then we refer to the tuple $(x_1,\ldots,x_n)$ as the \emph{cluster}, each $x_i$ as a \emph{cluster variable}, and $y_i$ as a \emph{coefficient}. For $1 \le k \le n$, the \emph{mutation of $(\bx,A)$ at $k$} is the seed $\mu_k(\bx,A)=((x'_1,\ldots,x'_n,y_1,\ldots,y_n),\mu_kA)$ defined by $x'_i = x_i$ if $i \neq k$, and
\[
x_k x'_k = \prod_{i=1}^nx_i^{[a_{ik}]_+}\prod_{i=1}^ny_i^{[a_{n+i,k}]_+}+\prod_{i=1}^nx_i^{[-a_{ik}]_+}\prod_{i=1}^ny_i^{[-a_{n+i,k}]_+}.
\]
It is easy to check that $\mu_k(\bx,A)$ is also a seed and $\mu_k$ is an involution. Moreover, the coefficients are not changed by mutations.

Let $B$ be an $n \times n$ skew-symmetrizable integer matrix. The \emph{principal extension of $B$} is the $2n \times n$ skew-symmetrizable matrix $\hat{B}$ whose upper part is $B$ and whose lower part is the identity matrix of rank $n$. We fix a seed $(\bx=(x_1,\ldots,x_n,y_1,\ldots,y_n),\hat{B})$, called the \emph{initial seed}. We also call the tuple $(x_1,\ldots,x_n)$ the \emph{initial cluster}, and each $x_i$ the \emph{initial cluster variable}. The \emph{cluster algebra $\cA(B)=\cA(\bx,\hat{B})$ with principal coefficients} for the initial seed $(\bx,\hat{B})$ is a $\bZ$-subalgebra of $\cF$ generated by the cluster variables and the coefficients obtained from $(\bx,\hat{B})$ by all sequences of mutations. We denote by $\cluster B$ the set of clusters of $\cA(B)$ and by $\clv B$ the set of cluster variables of $\cA(B)$.

One of the remarkable properties of cluster algebras with principal coefficients is the (strongly) \emph{Laurent phenomenon}.

\begin{theorem}[\cite{FZ02,FZ07}]\label{thm:Laurent phenomenon}
Any cluster variable $x$ is expressed by a Laurent polynomial of the initial cluster variables $x_1,\ldots,x_n$ and coefficients $y_1,\ldots,y_n$ of the form
\[
x=\frac{f(x_1,\ldots,x_n,y_1,\ldots,y_n)}{x_1^{d_1} \cdots x_n^{d_n}},
\]
where $f(x_1,\ldots,x_n,y_1,\ldots,y_n) \in \bZ[x_1,\ldots,x_n,y_1,\ldots,y_n]$ and $d_i \in \bZ_{\ge 0}$.
\end{theorem}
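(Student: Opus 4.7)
The plan is to prove the theorem by induction on the length of a mutation sequence producing a cluster containing $x$. For the initial cluster (length zero), the claim is trivial with $d_i = 0$ and $f = x_j$ for $x = x_j$. For a single mutation $\mu_k$ applied to the initial seed, the exchange relation
\[
x_k x_k' = \prod_{i=1}^n x_i^{[a_{ik}]_+}\prod_{i=1}^n y_i^{[a_{n+i,k}]_+} + \prod_{i=1}^n x_i^{[-a_{ik}]_+}\prod_{i=1}^n y_i^{[-a_{n+i,k}]_+}
\]
directly exhibits $x_k'$ as a Laurent polynomial in $x_1,\ldots,x_n,y_1,\ldots,y_n$ with denominator $x_k$, and the remaining $x_i' = x_i$ for $i \neq k$ are obviously Laurent.

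The core difficulty arises for longer mutation sequences: iterating exchange relations and substituting Laurent expressions into one another can a priori introduce new denominators that are not monomials in the initial $x_i$. To control this, I would invoke the \emph{Caterpillar Lemma} of Fomin--Zelevinsky, which asserts that if an element $f \in \cF$ is Laurent with respect to each of several overlapping clusters arranged along a caterpillar tree of mutations, then a coprimality analysis of the associated exchange polynomials forces the denominators in the two expressions to cancel, so that $f$ is already a Laurent polynomial in the common variables. Propagating this statement along any path of mutations from the initial seed to the seed containing $x$ yields the result. The non-negativity $d_i \in \bZ_{\ge 0}$ is built into the conclusion because the only denominators that can appear are monomials in the initial cluster variables, and integrality of the numerator follows from the fact that all exchange relations have coefficients in $\bZ[y_1,\ldots,y_n]$.

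The main obstacle is establishing the Caterpillar Lemma itself. The crux is to show that for two adjacent clusters $\bx, \mu_k(\bx)$ appearing in the caterpillar, the exchange binomials are coprime as polynomials, so any Laurent expression in both must in fact be Laurent in their common $(n-1)$ variables together with either $x_k$ or $x_k'$; iterating this through adjacent vertices of the caterpillar then forces joint Laurentness. This requires a careful induction and a polynomial divisibility argument, and is the place where the bulk of the combinatorial and algebraic work resides.

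An alternative, better adapted to the principal-coefficient setting of \cite{FZ07}, would be to first establish the separation formula, which writes every cluster variable explicitly in terms of its $g$-vector and its $F$-polynomial evaluated at shifted coefficients; the Laurent property then becomes a direct consequence of the manifest form of this expression, and the exponents $d_i$ are read off from the $g$-vector. Either strategy reduces the theorem to its classical proof, which we would cite from \cite{FZ02,FZ07} rather than reprove in full.
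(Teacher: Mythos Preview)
The paper does not prove this theorem at all: it is stated with attribution to \cite{FZ02,FZ07} and used as a black box to set up the $\bZ^n$-grading and $g$-vectors. Your proposal correctly outlines the classical Fomin--Zelevinsky strategy (induction along a mutation path, with the Caterpillar Lemma handling the coprimality/denominator control) and the alternative via the separation of additions formula, and you end in the same place the paper does---citing \cite{FZ02,FZ07} rather than reproving the result. So there is nothing to compare: your sketch is sound background, but for the purposes of this paper no proof is required beyond the citation.
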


Theorem \ref{thm:Laurent phenomenon} means that $\cA(B)$ is contained in $\bZ[x_1^{\pm 1},\ldots,x_n^{\pm 1},y_1,\ldots,y_n]$. We consider its $\bZ^n$-grading given by
\[
 \deg(x_i)=\be_i,\ \ \deg(y_j)=\sum_{i=1}^n -b_{ij}\be_i,
\]
where $\be_1,\ldots,\be_n$ are the standard basis vectors in $\bZ^n$. Then every cluster variable $x$ of $\cA(B)$ is homogeneous with respect to this $\bZ^n$-grading \cite[Proposition 6.1]{FZ07}. The degree $g(x):=\deg(x)$ is called the \emph{$g$-vector} of $x$. The cone spanned by the $g$-vectors of cluster variables in a cluster $\bx$ is called the \emph{$g$-vector cone} of $\bx$, that is, it is given by $C(\bx)=\{\sum_{x \in \bx}a_x g(x)\mid a_x\in\bR_{\ge 0}\}$.

\begin{example}\label{ex:cluster}
Let $B=\begin{bmatrix}0&-1\\4&0\end{bmatrix}$. Then $\hat{B}$ is the matrix in Example \ref{ex:matrix}. Applying the same sequence of mutations as Example \ref{ex:matrix} to the initial seed $((x_1,x_2,y_1,y_2),\hat{B})$, the corresponding clusters of $\cA(B)$ are given by
{\setlength\arraycolsep{0.5mm}
\begin{eqnarray*}
\left(x_1,x_2\right)\xleftrightarrow{\mu_2}
\left(x_1,\frac{x_1+y_2}{x_2}\right)&\xleftrightarrow{\mu_1}&
\left(\frac{(x_1+y_2)^4+y_1y_2^4x_2^4}{x_1x_2^4},\frac{x_1+y_2}{x_2}\right)\\
&\xleftrightarrow{\mu_2}&
\left(\frac{(x_1+y_2)^4+y_1y_2^4x_2^4}{x_1x_2^4},\frac{(x_1+y_2)^3+y_1y_2^3x_2^4}{x_1x_2^3}\right).
\end{eqnarray*}}\hspace{-1.5mm}
Then the degrees $\deg(x_1)=\deg(y_2)=(1,0), \deg(x_2)=(0,1), \deg(y_1)=(0,-4)$
gives the $g$-vectors
\[
g\Bigl(\frac{x_1+y_2}{x_2}\Bigr)=(1,-1),\ g\Bigl(\frac{(x_1+y_2)^4+y_1y_2^4x_2^4}{x_1x_2^4}\Bigr)=(3,-4),\ g\Bigl(\frac{(x_1+y_2)^3+y_1y_2^3x_2^4}{x_1x_2^3}\Bigr)=(2,-3).
\]
\end{example}

\subsection{Cluster algebras associated with weighted orbifolds}

Let $\cO^w=(S,M,Q)$ be a weighted orbifold. To a tagged triangulation $T$ of $\cO^w$, we associate a $|T|\times|T|$ skew-symmetrizable integer matrix $B_T$ as follows (see \cite[Tables 3.3, 3.4, and 3.5]{FeST12a}): For each triangle $\triangle$ of $T$, we define the $|T|\times|T|$ integer matrix $B^{\triangle}=(b_{ij}^{\triangle})$, where $b_{ij}^{\triangle}$ is given as in Figures \ref{fig:matrices non-orbifold}, \ref{fig:matrices orbifold} and \ref{Fig:matrices-exceptional} if $i$ and $j$ are sides of $\triangle$ and $b_{ij}^{\triangle}=0$ otherwise. Then we define the matrix
\[
B_T:=\sum_{\triangle}B^{\triangle},
\]
where $\triangle$ runs over all triangles of $T$. For $i \in T$, we take $d_i$ as a weight of $i$, where $d_i=1$ if $i$ is not a pending arc. Then $B_T=(b_{ij})_{i,j\in T}$ is skew-symmetrizable such that $b_{ij}d_j=-b_{ji}d_i$.

\begin{figure}[htp]
\centering
\begin{tabular}{c|c|c|c}
$\triangle$ &
\begin{tikzpicture}[baseline=7]
\coordinate(u)at(0,1);\coordinate(l)at(-0.7,-0.1);\coordinate(r)at(0.7,-0.1);
\draw(u)--node[above left]{$1$}(l)--node[below]{$3$}(r)--node[above right]{$2$}(u);
\fill(u)circle(0.07);\fill(l)circle(0.07);\fill(r)circle(0.07);\node at(0,1.2){};
\end{tikzpicture}
 &
\begin{tikzpicture}[baseline=-4]
\coordinate(p)at(0,0);\coordinate(u)at(0,1);\coordinate(d)at(0,-1);
\draw(d)to[out=180,in=180]node[left]{$1$}(u);\draw(d)to[out=0,in=0]node[right]{$2$}(u);
\draw(d)--node[left]{$3$}(p);
\draw(p)to[out=80,in=100,relative]node[pos=0.2]{\rotatebox{40}{\footnotesize $\bowtie$}}node[fill=white,inner sep=1.5]{$4$}(d);
\fill(p)circle(0.07);\fill(u)circle(0.07);\fill(d)circle(0.07);\node at(0,1.1){};\node at(0,-1.1){};
\end{tikzpicture}
 &
\begin{tikzpicture}[baseline=-4]
\coordinate(d)at(0,-1);\coordinate(l)at(-0.4,0);\coordinate(r)at(0.4,0);
\draw(0,0)circle(1);\draw(l)--node[fill=white,inner sep=1.5]{$2$}(d)--node[fill=white,inner sep=1.5]{$4$}(r);
\draw(l)to[out=-80,in=-100,relative]node[pos=0.2]{\rotatebox{-25}{\footnotesize $\bowtie$}}node[fill=white,inner sep=1.5,pos=0.45]{$3$}(d);
\draw(r)to[out=80,in=100,relative]node[pos=0.2]{\rotatebox{25}{\footnotesize $\bowtie$}}node[fill=white,inner sep=1.5,pos=0.45]{$5$}(d);
\fill(d)circle(0.07);\fill(l)circle(0.07);\fill(r)circle(0.07);\node at(0,1.2){};\node at(0,-1.2){};
\node at(0,0.75){$1$};
\end{tikzpicture}
\\\hline
 $(b_{ij}^{\triangle})_{i,j \in \triangle}$ & $\begin{bmatrix}0&1&-1\\-1&0&1\\1&-1&0\end{bmatrix}$
 & $\begin{bmatrix}0&1&-1&-1\\-1&0&1&1\\1&-1&0&0\\1&-1&0&0\end{bmatrix}$
 & $\begin{bmatrix}0&-1&-1&1&1\\1&0&0&-1&-1\\1&0&0&-1&-1\\-1&1&1&0&0\\-1&1&1&0&0\end{bmatrix}$
\begin{tikzpicture}[baseline=-4]\node at(0,1.1){};\node at(0,-1.1){};\end{tikzpicture}
\end{tabular}
\caption{Matrices associated with non-exceptional triangles without orbifold points}
\label{fig:matrices non-orbifold}
\end{figure}

\begin{figure}[htp]
\centering
\begin{tabular}{c|c|c}
$\triangle$ & Weights & $(b_{ij}^{\triangle})_{i,j \in \triangle}$
\\\hline
\multirow{2}{*}{
\begin{tikzpicture}[baseline=0]
\coordinate(p)at(0,0);\coordinate(u)at(0,1);\coordinate(d)at(0,-1);
\draw(d)to[out=180,in=180]node[left]{$1$}(u);\draw(d)to[out=0,in=0]node[right]{$2$}(u);
\draw(d)--node[left]{$3$}(p)node[above]{$p$};
\node at(p){$\times$};\fill(u)circle(0.07);\fill(d)circle(0.07);
\end{tikzpicture}
} \hspace{1mm}
 & $w(p)=\frac{1}{2}$ & $\begin{bmatrix}0&1&-2\\-1&0&2\\1&-1&0\end{bmatrix}$
\begin{tikzpicture}[baseline=-4]\node at(0,0.8){};\node at(0,-0.8){};\end{tikzpicture}
\\\cline{2-3}%
 & $w(p)=2$ & $\begin{bmatrix}0&1&-1\\-1&0&1\\2&-2&0\end{bmatrix}$
\begin{tikzpicture}[baseline=-4]\node at(0,0.8){};\node at(0,-0.8){};\end{tikzpicture}
\\\hline
\multirow{2}{*}{
\begin{tikzpicture}[baseline=0]
\coordinate(d)at(0,-1);\coordinate(l)at(-0.4,0);\coordinate(r)at(0.4,0);
\draw(0,0)circle(1);\draw(l)--node[fill=white,inner sep=1.5]{$2$}(d)--node[fill=white,inner sep=1.5]{$4$}(r)node[above]{$p$};
\draw(l)to[out=-80,in=-100,relative]node[pos=0.2]{\rotatebox{-25}{\footnotesize $\bowtie$}}node[fill=white,inner sep=1.5,pos=0.45]{$3$}(d);
\fill(d)circle(0.07);\fill(l)circle(0.07);\node at(r){$\times$};
\node at(0,0.75){$1$};
\end{tikzpicture}
} \hspace{1mm}
 & $w(p)=\frac{1}{2}$ & $\begin{bmatrix}0&-1&-1&2\\1&0&0&-2\\1&0&0&-2\\-1&1&1&0\end{bmatrix}$
\begin{tikzpicture}[baseline=-4]\node at(0,1){};\node at(0,-1){};\end{tikzpicture}
\\\cline{2-3}%
 & $w(p)=2$ & $\begin{bmatrix}0&-1&-1&1\\1&0&0&-1\\1&0&0&-1\\-2&2&2&0\end{bmatrix}$
\begin{tikzpicture}[baseline=-4]\node at(0,1){};\node at(0,-1){};\end{tikzpicture}
\\\hline
\multirow{2}{*}{
\begin{tikzpicture}[baseline=0]
\coordinate(d)at(0,-1);\coordinate(l)at(-0.4,0);\coordinate(r)at(0.4,0);
\draw(0,0)circle(1);\draw(l)node[above]{$p$}--node[fill=white,inner sep=1.5]{$2$}(d)--node[fill=white,inner sep=1.5]{$3$}(r);
\draw(r)to[out=80,in=100,relative]node[pos=0.2]{\rotatebox{25}{\footnotesize $\bowtie$}}node[fill=white,inner sep=1.5,pos=0.45]{$4$}(d);
\fill(d)circle(0.07);\node at(l){$\times$};\fill(r)circle(0.07);
\node at(0,0.75){$1$};
\end{tikzpicture}
} \hspace{1mm}
 & $w(p)=\frac{1}{2}$ & $\begin{bmatrix}0&-2&1&1\\1&0&-1&-1\\-1&2&0&0\\-1&2&0&0\end{bmatrix}$
\begin{tikzpicture}[baseline=-4]\node at(0,1){};\node at(0,-1){};\end{tikzpicture}
\\\cline{2-3}%
 & $w(p)=2$ & $\begin{bmatrix}0&-1&1&1\\2&0&-2&-2\\-1&1&0&0\\-1&1&0&0\end{bmatrix}$
\begin{tikzpicture}[baseline=-4]\node at(0,1){};\node at(0,-1){};\end{tikzpicture}
\\\hline
\multirow{4}{*}{
\begin{tikzpicture}[baseline=0]
\coordinate(d)at(0,-1);\coordinate(l)at(-0.4,0);\coordinate(r)at(0.4,0);
\draw(0,0)circle(1);\draw(l)node[above]{$p$}--node[fill=white,inner sep=1.5]{$2$}(d)--node[fill=white,inner sep=1.5]{$3$}(r)node[above]{$q$};
\fill(d)circle(0.07);\node at(l){$\times$};\node at(r){$\times$};\node at(0,3){};
\node at(0,0.75){$1$};
\end{tikzpicture}
} \hspace{1mm}
 & 
\begin{tikzpicture}[baseline=-4]\node at(0,0.3){$w(p)=\frac{1}{2}$};\node at(0,-0.3){$w(q)=\frac{1}{2}$};\end{tikzpicture}
 & $\begin{bmatrix}0&-2&2\\1&0&-2\\-1&2&0\end{bmatrix}$
\begin{tikzpicture}[baseline=-4]\node at(0,0.8){};\node at(0,-0.8){};\end{tikzpicture}
\\\cline{2-3}%
 & 
\begin{tikzpicture}[baseline=-4]\node at(0,0.3){$w(p)=\frac{1}{2}$};\node at(0,-0.3){$w(q)=2$};\end{tikzpicture}
 & $\begin{bmatrix}0&-2&1\\1&0&-1\\-2&4&0\end{bmatrix}$
\begin{tikzpicture}[baseline=-4]\node at(0,0.8){};\node at(0,-0.8){};\end{tikzpicture}
\\\cline{2-3}%
 & 
\begin{tikzpicture}[baseline=-4]\node at(0,0.3){$w(p)=2$};\node at(0,-0.3){$w(q)=\frac{1}{2}$};\end{tikzpicture}
 & $\begin{bmatrix}0&-1&2\\2&0&-4\\-1&1&0\end{bmatrix}$
\begin{tikzpicture}[baseline=-4]\node at(0,0.8){};\node at(0,-0.8){};\end{tikzpicture}
\\\cline{2-3}%
 & 
\begin{tikzpicture}[baseline=-4]\node at(0,0.3){$w(p)=2$};\node at(0,-0.3){$w(q)=2$};\end{tikzpicture}
 & $\begin{bmatrix}0&-1&1\\2&0&-1\\-2&1&0\end{bmatrix}$
\begin{tikzpicture}[baseline=-4]\node at(0,0.8){};\node at(0,-0.8){};\end{tikzpicture}
\end{tabular}
\caption{Matrices associated with non-exceptional triangles with orbifold points}
\label{fig:matrices orbifold}
\end{figure}

\begin{figure}[htp]
\centering
\begin{tabular}{c|c|c}
$\triangle$ & Weights & $(b_{ij}^{\triangle})_{i,j \in \triangle}$
\\\hline
\begin{tikzpicture}[baseline=0mm]
\coordinate(0)at(0,0);\coordinate(u)at(90:1);\coordinate(r)at(-30:1);\coordinate(l)at(210:1);
\draw(0)--node[left]{$1$}(u);\draw(0)--node[below]{$3$}(l);\draw(0)--node[above right]{$5$}(r);
\draw(0)to[out=-60,in=-120,relative]node[pos=0.8]{\rotatebox{20}{\footnotesize $\bowtie$}}node[right]{$2$}(u);
\draw(0)to[out=-60,in=-120,relative]node[pos=0.8]{\rotatebox{-30}{\footnotesize $\bowtie$}}node[above]{$4$} (l);
\draw(0)to[out=-60,in=-120,relative]node[pos=0.8]{\rotatebox{100}{\footnotesize $\bowtie$}}node[below]{$6$} (r);
\fill(0)circle(0.07);\fill(u)circle(0.07);\fill(l)circle(0.07);\fill(r)circle(0.07);
\end{tikzpicture}
\hspace{1mm}
 & & $\begin{bmatrix}0&0&1&1&-1&-1\\0&0&1&1&-1&-1\\-1&-1&0&0&1&1\\-1&-1&0&0&1&1\\1&1&-1&-1&0&0\\1&1&-1&-1&0&0\end{bmatrix}$
\begin{tikzpicture}[baseline=-4]\node at(0,1.3){};\node at(0,-1.3){};\end{tikzpicture}
\\\hline
\multirow{2}{*}{
\begin{tikzpicture}[baseline=0mm]
\coordinate(0)at(0,0);\coordinate(u)at(90:1);\coordinate(r)at(-30:1);\coordinate(l)at(210:1);
\draw(0)--node[left]{$1$}(u);\draw(0)--node[below]{$3$}(l);\draw(0)--node[below]{$5$}(r)node[below]{$r$};
\draw(0)to[out=-60,in=-120,relative]node[pos=0.8]{\rotatebox{20}{\footnotesize $\bowtie$}}node[right]{$2$}(u);
\draw(0)to[out=-60,in=-120,relative]node[pos=0.8]{\rotatebox{-30}{\footnotesize $\bowtie$}}node[above]{$4$} (l);
\fill(0)circle(0.07);\fill(u)circle(0.07);\fill(l)circle(0.07);\node at(r){$\times$};
\end{tikzpicture}
} \hspace{1mm}
 & $w(r)=\frac{1}{2}$ & $\begin{bmatrix}0&0&1&1&-2\\0&0&1&1&-2\\-1&-1&0&0&2\\-1&-1&0&0&2\\1&1&-1&-1&0\end{bmatrix}$
\begin{tikzpicture}[baseline=-4]\node at(0,1.1){};\node at(0,-1.1){};\end{tikzpicture}
\\\cline{2-3}%
 & $w(r)=2$ & $\begin{bmatrix}0&0&1&1&-1\\0&0&1&1&-1\\-1&-1&0&0&1\\-1&-1&0&0&1\\2&2&-2&-2&0\end{bmatrix}$
\begin{tikzpicture}[baseline=-4]\node at(0,1.1){};\node at(0,-1.1){};\end{tikzpicture}
\\\hline
\multirow{4}{*}{
\begin{tikzpicture}[baseline=0mm]
\coordinate(0)at(0,0);\coordinate(u)at(90:1);\coordinate(r)at(-30:1);\coordinate(l)at(210:1);
\draw(0)--node[left]{$1$}(u);\draw(0)--node[below]{$3$}(l)node[below]{$q$};\draw(0)--node[below]{$4$}(r)node[below]{$r$};
\draw(0)to[out=-60,in=-120,relative]node[pos=0.8]{\rotatebox{20}{\footnotesize $\bowtie$}}node[right]{$2$}(u);
\fill(0)circle(0.07);\fill(u)circle(0.07);\node at(l){$\times$};\node at(r){$\times$};\node at(0,3){};
\end{tikzpicture}
} \hspace{1mm}
 &
\begin{tikzpicture}[baseline=-4]\node at(0,0.3){$w(q)=\frac{1}{2}$};\node at(0,-0.3){$w(r)=\frac{1}{2}$};\end{tikzpicture}
 & $\begin{bmatrix}0&0&2&-2\\0&0&2&-2\\-1&-1&0&2\\1&1&-2&0\end{bmatrix}$
\begin{tikzpicture}[baseline=-4]\node at(0,0.9){};\node at(0,-0.9){};\end{tikzpicture}
\\\cline{2-3}%
 &
\begin{tikzpicture}[baseline=-4]\node at(0,0.3){$w(q)=\frac{1}{2}$};\node at(0,-0.3){$w(r)=2$};\end{tikzpicture}
 & $\begin{bmatrix}0&0&2&-1\\0&0&2&-1\\-1&-1&0&1\\2&2&-4&0\end{bmatrix}$
\begin{tikzpicture}[baseline=-4]\node at(0,0.9){};\node at(0,-0.9){};\end{tikzpicture}
\\\cline{2-3}%
 &
\begin{tikzpicture}[baseline=-4]\node at(0,0.3){$w(q)=2$};\node at(0,-0.3){$w(r)=\frac{1}{2}$};\end{tikzpicture}
 & $\begin{bmatrix}0&0&1&-2\\0&0&1&-2\\-2&-2&0&4\\1&1&-1&0\end{bmatrix}$
\begin{tikzpicture}[baseline=-4]\node at(0,0.9){};\node at(0,-0.9){};\end{tikzpicture}
\\\cline{2-3}%
 &
\begin{tikzpicture}[baseline=-4]\node at(0,0.3){$w(q)=2$};\node at(0,-0.3){$w(r)=2$};\end{tikzpicture}
 & $\begin{bmatrix}0&0&1&-1\\0&0&1&-1\\-2&-2&0&1\\2&2&-1&0\end{bmatrix}$
\begin{tikzpicture}[baseline=-4]\node at(0,0.9){};\node at(0,-0.9){};\end{tikzpicture}
\\\hline
\multirow{3}{*}{
\begin{tikzpicture}[baseline=0mm]
\coordinate(0)at(0,0);\coordinate(u)at(90:1);\coordinate(r)at(-30:1);\coordinate(l)at(210:1);
\draw(0)--node[left]{$1$}(u)node[above]{$p$};\draw(0)--node[below]{$2$}(l)node[below]{$q$};\draw(0)--node[below]{$3$}(r)node[below]{$r$};
\fill(0)circle(0.07);\node at(u){$\times$};\node at(l){$\times$};\node at(r){$\times$};\node at(0,2.3){};
\end{tikzpicture}
} \hspace{1mm}
 &
\begin{tikzpicture}[baseline=-4]\node at(0,0.3){$w(p)=w(q)=w(r)$};\node at(0,-0.3){$=\frac{1}{2}$ or $2$};\end{tikzpicture}
 & $\begin{bmatrix}0&2&-2\\-2&0&2\\2&-2&0\end{bmatrix}$
\begin{tikzpicture}[baseline=-4]\node at(0,0.7){};\node at(0,-0.7){};\end{tikzpicture}
\\\cline{2-3}%
 &
\begin{tikzpicture}[baseline=-4]\node at(0,0.3){$w(p)=w(q)=\frac{1}{2}$};\node at(0,-0.3){$w(r)=2$};\end{tikzpicture}
 & $\begin{bmatrix}0&2&-1\\-2&0&1\\4&-4&0\end{bmatrix}$
\begin{tikzpicture}[baseline=-4]\node at(0,0.7){};\node at(0,-0.7){};\end{tikzpicture}
\\\cline{2-3}%
 &
\begin{tikzpicture}[baseline=-4]\node at(0,0.3){$w(p)=\frac{1}{2}$};\node at(0,-0.3){$w(q)=w(r)=2$};\end{tikzpicture}
 & $\begin{bmatrix}0&1&-1\\-4&0&2\\4&-2&0\end{bmatrix}$
\begin{tikzpicture}[baseline=-4]\node at(0,0.7){};\node at(0,-0.7){};\end{tikzpicture}
\end{tabular}
\caption{Matrices associated with exceptional triangles}
\label{Fig:matrices-exceptional}
\end{figure}

We have the cluster algebra $\cA(B_T)$ associated with $T$. We denote by $\bT_T$ the set of tagged triangulations of $\cO^w$ obtained from $T$ by sequences of flips, and by $\bA_T$ the set of tagged arcs of all tagged triangulations in $\bT_T$. Here, Theorem \ref{thm:transitivity} means that $\bT_T$ contains all tagged triangulations of $\cO^w$ except for a weighted orbifold $\cO^w$ with empty boundary and exactly one puncture, in which case $\bT_T$ consists of all tagged triangulations of $\cO^w$ whose tags at the puncture are the same as ones of $T$.

\begin{theorem}[{\cite[Theorem 9.1]{FeST12a}}]\label{thm:bijection}
Let $T$ be a tagged triangulation of $\cO^w$. There is a bijection
\[
x_T : \bA_T \leftrightarrow \clv B_T.
\]
 Moreover, it induces a bijection
\[
x_T : \bT_T \leftrightarrow \cluster B_T
\]
which sends $T$ to the initial cluster of $\cA(B_T)$ and the flip of $U\in\bT_T$ at $\gamma\in U$ corresponds to the mutation of $x_T(U)$ at $x_T(\gamma)$.
\end{theorem}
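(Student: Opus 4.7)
The plan is to build the bijection $x_T$ by induction along sequences of flips, anchored at the initial seed. The core ingredient is the compatibility of flips with matrix mutation: for any $U\in\bT_T$ and any $\gamma\in U$, if $U'$ is the flip of $U$ at $\gamma$, then $B_{U'}=\mu_\gamma B_U$. This is a finite case-check against Figures \ref{fig:matrices non-orbifold}, \ref{fig:matrices orbifold}, and \ref{Fig:matrices-exceptional}: one enumerates the quadrilaterals (or their orbifold analogues) surrounding $\gamma$, computes both $B_U$ and $B_{U'}$ triangle by triangle from the local data, and checks that the weighted mutation rule with skew-symmetrizer $d_i$ reproduces the resulting matrix. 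The orbifold points with weight $\tfrac{1}{2}$ or $2$ introduce exactly the factors of $\pm 2$ needed.

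Once local compatibility is in hand, I would define $x_T$ by transport. Start by setting $x_T(\gamma)=x_\gamma$ for $\gamma\in T$ as the initial cluster variables, and extend inductively: if $x_T$ has been defined on some $U\in\bT_T$ and $U'$ is the flip of $U$ at $\gamma$ producing a new tagged arc $\gamma'$, define $x_T(\gamma')$ by the exchange relation of $\mu_\gamma$ applied to $x_T(U)$. Surjectivity of $x_T:\bT_T\to\cluster B_T$ is then immediate from Theorem \ref{thm:transitivity} and the definition of $\cA(B_T)$: every cluster is reached from the initial one by mutations, which by step one match a sequence of flips starting at $T$.

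The serious content is showing this recipe is well-defined, i.e.\ that two flip sequences from $T$ ending at the same tagged triangulation yield the same cluster. Equivalently, every loop in the flip graph of $\bT_T$ must correspond to a composition of mutations acting as the identity. The standard route is to decompose such loops into elementary relations and verify each one: the involution $\mu_k^2=\mathrm{id}$, commutation of disjoint flips, and the local four/five/six-cycle relations arising in a quadrilateral, pentagon, or hexagonal piece of the triangulation. In the orbifold setting one must additionally handle loops involving pending arcs and the local moves near the triangles in Figures \ref{Fig:non-exceptional-triangles} and \ref{Fig:exceptional-triangles}; these reduce to finite-type computations (types $B$, $C$, $G_2$) carried out on small polygons containing one or two weighted orbifold points.

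The hardest part is injectivity of $x_T$ on $\bA_T$: distinct tagged arcs must yield distinct cluster variables. I would handle this via an invariant that separates tagged arcs. The natural choice is the $g$-vector: by the construction in Section 2, each tagged arc $\delta$ has an elementary laminate $\se(\delta)$ with shear-coordinate vector $b_T(\se(\delta))\in\bZ^{|T|}$, and by Theorem \ref{thm:thurston} the map $\se$ is injective so these vectors are pairwise distinct. One then identifies $-b_T(\se(\delta))$ with the $g$-vector $g(x_T(\delta))$ in $\cA(B_T)$ (the standard Fomin--Thurston dictionary, extended to the orbifold case via $\widehat{(-)}$ and the weight-adjusted formula \eqref{eq:shear coordinates}); this matching is proved by induction on flip sequences, since both $g$-vectors and shear coordinates transform by the same piecewise-linear mutation rule. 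Because distinct $g$-vectors in a cluster algebra with principal coefficients force distinct cluster variables (the homogeneity of cluster monomials with respect to the $\bZ^n$-grading), injectivity follows. Combined with surjectivity this yields $\bA_T\leftrightarrow\clv B_T$, and the cluster-level bijection $\bT_T\leftrightarrow\cluster B_T$ follows at once from the compatibility of flips and mutations established in step one.
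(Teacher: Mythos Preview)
The paper does not prove this theorem; it is imported wholesale from \cite[Theorem 9.1]{FeST12a} and used as a black box. There is therefore no proof in the paper to compare your proposal against.

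Your outline follows the standard template for such results---establish local flip/mutation compatibility via a case check against the triangle matrices, then transport along the flip graph---and is in the right spirit. Two points deserve comment. First, the well-definedness step is where the genuine content lies, and your plan to decompose loops in the flip graph into elementary relations presupposes that such a decomposition exists for weighted orbifolds. For marked surfaces this is supplied by \cite{FoST08}; for orbifolds one must either redo that analysis or (more efficiently, and closer to what \cite{FeST12a} actually does) pass to the associated surface $\hat{\cO}^w$ via the map $\widehat{(-)}$ and invoke the surface case. Your appeal to ``finite-type computations in types $B$, $C$, $G_2$'' is plausible for the local relations but does not by itself show that every loop decomposes into them.

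Second, your injectivity argument leans on identifying $g(x_T(\delta))$ with a negated shear coordinate. In this paper that identification is Proposition~\ref{prop:g=-b}, which is stated \emph{after} Theorem~\ref{thm:bijection} and, moreover, takes place in the dual weighted orbifold $\cO^{1/w}$: the correct formula is $g(x_T(\delta))=-b_{T^{\ast}}(\se(\delta^{\ast}))$ with $T^{\ast},\delta^{\ast}$ in $\cO^{1/w}$, a subtlety your sketch elides. There is no logical circularity in the literature---Proposition~\ref{prop:g=-b} is cited from \cite{FeT17} and its proof does not rely on Theorem~\ref{thm:bijection}---but within the architecture of this paper you are reaching forward to a result not yet stated.
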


Since the map $\frac{1}{w}$ is a weight of $\cO$, there is a new weighted orbifold $\cO^{\frac{1}{w}}$. Tagged arcs $\delta$ and tagged triangulations $T$ of $\cO^w$ are also of $\cO^{\frac{1}{w}}$. To distinguish them, we denote them of $\cO^{\frac{1}{w}}$ by $\delta^{\ast}$ and $T^{\ast}$, respectively.

\begin{proposition}[{\cite[Lemma 8.6]{FeT17}}]\label{prop:g=-b}
For each $\delta\in\bA_T$, we have
\[
g(x_T(\delta))=-b_{T^{\ast}}(\se(\delta^{\ast})),
\]
where $\se(\delta^{\ast})$ is an elementary laminate of $\cO^{\frac{1}{w}}$.
\end{proposition}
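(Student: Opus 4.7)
The plan is to mirror the Fomin--Thurston duality between $g$-vectors and shear coordinates and proceed by induction on the number of flips needed to reach $\delta$ from $T$. The base case $\delta \in T$ is direct: then $x_T(\delta)$ is an initial cluster variable with $g$-vector $\be_\delta$, while the elementary laminate $\se(\delta^*)$ runs in a small neighborhood of $\delta^*$ and so does not cross any other arc of $T^*$. A short case analysis of the local picture near $\delta^*$, split according to whether $\delta^*$ is pending (and with which weight under $1/w$) and whether it bounds a self-folded triangle of $T^*$, verifies via \eqref{eq:shear coordinates} that $b_{\delta^*,T^*}(\se(\delta^*)) = -1$ and $b_{\gamma^*,T^*}(\se(\delta^*)) = 0$ for every $\gamma^* \in T^* \setminus \{\delta^*\}$.

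For the inductive step, I would establish that the two sides transform identically under the flip--mutation correspondence of Theorem \ref{thm:bijection}. On the cluster side, $g$-vectors mutate via the tropical formula of \cite[Proposition 6.6]{FZ07}. On the lamination side, flipping $T^*$ at some $\gamma^* \in T^*$ transforms each shear coordinate $b_{\gamma',T^*}(\se(\delta^*))$ via the same tropical formula; this parallels the marked-surface case treated in \cite{FT18} and can be deduced from the local shear-coordinate picture of \cite[Section 6]{FeST12a} together with the mutation rule of $B_{T^*}$ encoded in Figures \ref{fig:matrices non-orbifold}--\ref{Fig:matrices-exceptional}. Since any $\delta \in \bA_T$ is reached from $T$ by a finite sequence of flips, induction completes the argument.

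The main obstacle will be the inductive step, where one must match the two tropical formulas entry by entry. The delicate cases involve self-folded triangles, pending arcs, and especially the exceptional triangles from Figure \ref{Fig:exceptional-triangles}, where the non-standard local configurations interact with the weight factors in \eqref{eq:shear coordinates}. The replacement of $\cO^w$ by $\cO^{1/w}$ on the lamination side is precisely designed so that the factor $\tfrac{1}{2}$ on pending shear coordinates aligns with the skew-symmetrizer $d_\delta$ of $B_T$, which in turn ensures that the signs and scalings on both sides of the equality coincide.
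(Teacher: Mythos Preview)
The paper does not prove this statement; it is quoted from \cite[Lemma~8.6]{FeT17} and used as a black box in the proof of Theorem~\ref{thm:dense wo}. There is thus no in-paper argument to compare your proposal against.

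Your outline is the standard route and is essentially how \cite{FeT17} (following \cite{FT18} in the surface case) proceeds: verify the identity on the initial triangulation and propagate via matching tropical recursions. One point to sharpen: you phrase the inductive step as ``flipping $T^*$ at some $\gamma^*\in T^*$'', but in the statement $T^*$ is the \emph{fixed} reference triangulation while $\delta$ ranges over $\bA_T$. The induction is on the flip distance from $T$ to a tagged triangulation containing $\delta$; what must be matched is the recursion for the vectors $b_{T^*}(\se(\varepsilon^*))$ as $\varepsilon$ varies by a flip inside the moving triangulation (with $T^*$ held fixed throughout) against the $g$-vector recursion for $g(x_T(\varepsilon))$ under mutation. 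Alternatively one may fix $\delta$ and vary the initial seed $T$, which is closer to your wording, but then the relevant $g$-vector formula is the one for change of initial seed rather than \cite[Proposition~6.6]{FZ07} as usually stated. Your observation that the passage $w\mapsto 1/w$ aligns the weight factors in \eqref{eq:shear coordinates} with the skew-symmetrizer of $B_T$ is precisely the structural input that makes the two recursions agree, and the pending-arc and exceptional-triangle cases you flag are where the detailed verification in \cite{FeT17} is concentrated.
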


\begin{proof}[Proof of Theorem \ref{thm:dense wo}]
The assertions follows from Theorems \ref{thm:dense lam}, \ref{thm:transitivity}, \ref{thm:bijection}, and \ref{prop:g=-b}.
\end{proof}

Finally, we give an example and finish this paper.

\begin{example}
Let $\cO^w$ be a monogon with no punctures and exactly two orbifold points $p$ and $q$ such that $w(p)=\frac{1}{2}$ and $w(q)=2$. Notice that $\cO^{\frac{1}{w}}$ is the weighted orbifold in Example \ref{ex:monogon}. Moreover, for the following tagged triangulation $T$ of $\cO^w$, the tagged triangulation $T^{\ast}$ of $\cO^{\frac{1}{w}}$ also appear in Example \ref{ex:monogon} and the associated matrix $B_T$ coincide with $B$ in Example \ref{ex:cluster}:
\[
 T=
\begin{tikzpicture}[baseline=-1mm]
\coordinate(d)at(0,-1);\coordinate(l)at(-0.4,0);\coordinate(r)at(0.4,0);
\draw(l)--node[fill=white,inner sep=2,pos=0.4]{$1$}(d)--node[fill=white,inner sep=2,pos=0.6]{$2$}(r);
\draw(0,0)circle(1);\fill(d)circle(0.07);\node at(l){$\times$};\node at(r){$\times$};\node at(-0.4,0.3){$p$};\node at(0.4,0.3){$q$};
\end{tikzpicture}
\ \text{, where}\hspace{2mm}
B_T=\begin{bmatrix}0&-1\\4&0\end{bmatrix}.
\]
By the examples and Theorem \ref{thm:bijection}, it is easy to check that
\[
\se(x_T^{-1}(x_1)^{\ast})=l_0,\ \se(x_T^{-1}(x_2)^{\ast})=r_0,\ 
\se\Bigl(x_T^{-1}\Bigl(\frac{x_1+y_2}{x_2}\Bigr)^{\ast}\Bigr)=r_1,
\]
\[
\se\Bigl(x_T^{-1}\Bigl(\frac{(x_1+y_2)^4+y_1y_2^4x_2^4}{x_1x_2^4}\Bigr)^{\ast}\Bigr)=l_1,\ 
\se\Bigl(x_T^{-1}\Bigl(\frac{(x_1+y_2)^4+y_1y_2^4x_2^4}{x_1x_2^4}\Bigr)^{\ast}\Bigr)=r_2.
\]
Similarly, all clusters of $\cA(B)$ correspond to laminations $\{l_i,r_i\}$ and $\{l_i,r_{i+1}\}$ of $\cO^{\frac{1}{w}}$ for $i \in \bZ$. By Proposition \ref{prop:g=-b}, the cones $-C(\{l_i,r_i\})$ and $-C(\{l_i,r_{i+1}\})$ give all $g$-vector cones in $\cA(B_T)$. Therefore, Example \ref{ex:dense lam} means that Theorem \ref{thm:dense wo} holds for the matrix in Example \ref{ex:cluster}.
\end{example}

\medskip\noindent{\bf Acknowledgements}.
The author was JSPS Overseas Research Fellow and supported by JSPS KAKENHI Grant Numbers JP20J00410, JP21K13761.

\bibliographystyle{alpha}
\bibliography{bib}

\end{document}